\newtheorem {theorem}{Theorem}[section]
\newtheorem {lemma}[theorem]{Lemma}
\newtheorem {proposition}[theorem]{Proposition}
\newtheorem {corollary}[theorem]{Corollary}
\newtheorem {definition}[theorem]{Definition}
\newtheorem {question}[theorem]{Question}
\theoremstyle{remark}
\newtheorem {remark}[theorem]{Remark}
\newtheorem {example}[theorem]{Example}
\def\Z {{\mathbb{Z}}}
\def\N {{\mathbb{N}}}
\def\R {{\mathbb{R}}}
\def\Q {{\mathbb{Q}}}
\def\CP{\mathbb{CP}}
\def\Kh{\operatorname{Kh}}
\def\KhRN{\operatorname{KhR}_N}
\def\bKhRN{\textbf{KhR}_N}
\def\bKhR{\textbf{KhR}}
\def\cKhR{\underline{\operatorname{KhR}}}
\def\cKhRNa{\underline{\operatorname{KhR}}_{N, \alpha}}
\def\KhR{\operatorname{KhR}}
\def\S{\mathcal{S}^N}
\def\St{\mathcal{S}^2}
\def\Sz{\mathcal{S}_0^N}
\def\bL{\overline{L}}
\def\gl{\mathfrak{gl}}
\def\sl{\mathfrak{sl}}
\def\del{\partial}
\def\k{\mathbbm{k}}
\def\hZ{\widehat{Z}}
\def\Aut{\operatorname{Aut}}
\def\Hom{\operatorname{Hom}}
\def\Ext{\operatorname{Ext}}
\def\Sym{\operatorname{Sym}}
\def\A{\mathcal{A}}
\def\Cr{\mathfrak{C}}
\def\F{\mathcal{F}}
\def\HH{\operatorname{HH}}
\newcommand{\Hn}[2]{{}_{#1}(H^n)_{#2}}
\newcommand{\Hnp}[2]{{}_{#1}(H^{n+1})_{#2}}
\newcommand{\M}[2]{{}_{#1}M_{#2}}
\newcommand{\m}[2]{{}_{#1}m_{#2}}
\def\mm{\mathbf{m}}
\def\tangle{J}
\def\Id{\operatorname{Id}}
\def\Out{\operatorname{Out}}
\def\ta{\tilde{a}}
\begin{document}

\title{Skein lasagna modules for $2$-handlebodies}

\author[Ciprian Manolescu]{Ciprian Manolescu}
\thanks {The authors were supported by NSF grants DMS-1708320 and DMS-2003488.}
\address {Department of Mathematics, Stanford University\\ 
Stanford, CA 94305, USA}
\email {cm5@stanford.edu}

\author[Ikshu Neithalath]{Ikshu Neithalath}
\address{Centre for Quantum Mathematics, University of Southern Denmark\\
5430 Odense M, Denmark}
\email{ikshu@imada.sdu.dk}

\begin{abstract}
Morrison, Walker, and Wedrich used the blob complex to construct a generalization of Khovanov-Rozansky homology to links in the boundary of a $4$-manifold. The degree zero part of their theory, called the skein lasagna module, admits an elementary definition in terms of certain diagrams in the $4$-manifold. We give a description of the skein lasagna module for $4$-manifolds without $1$- and $3$-handles, and present some explicit calculations for disk bundles over $S^2$.
\end{abstract}

\subjclass[2020]{57K18 (Primary); 57R56, 57K41 (Secondary)}
\maketitle

\section{Introduction}
Over the past twenty years, categorified knot invariants have been a central topic in low dimensional topology. The starting point was Khovanov's categorification of the Jones polynomial \cite{Kh}. This was generalized by Khovanov and Rozansky in \cite{KhR} to a sequence of link homology theories $\KhRN$ for $N \geq 1$, where Khovanov homology corresponds to $N=2$. Khovanov homology has been successfully used to give new, combinatorial proofs of deep results about smooth surfaces in 4-manifolds, such as the Milnor Conjecture \cite{Rasmussen} and the Thom Conjecture \cite{PLC}, for which the original proofs involved gauge theory \cite{KMMilnor, KMThom}. Furthermore, by now Khovanov homology has found its own novel topological applications, as for example in the work of Piccirillo \cite{Pic1, Pic2}. Still, compared to the invariants derived from gauge theory or Heegaard Floer homology, Khovanov homology has its limitations, due to the fact that its construction is {\em a priori} just for links in $\R^3$. In particular, a major open question is whether Khovanov or Khovanov-Rozansky homology can say something new about the classification of smooth $4$-manifolds.

In \cite{MWW}, Morrison, Walker, and Wedrich proposed an extension of Khovanov-Rozansky  homology to links in the boundaries of arbitrary oriented $4$-manifolds. Specifically, they define an invariant 
$$\S \!(W; L)=\bigoplus_{b\in \Z} \S_b(W; L) =\bigoplus_{b \in \Z} \Bigl( \bigoplus_{i, j\in \Z} \S_{b, i, j}(W; L) \Bigr),$$ which is a triply-graded Abelian group associated to a smooth, oriented $4$-manifold $W$ and a framed link $L \subset \del W$. Two of the gradings ($i$ and $j$) are the usual ones in Khovanov-Rozansky homology, and the third, the {\em blob grading} $b$, is new. The construction of $\S(W; L)$ starts by defining the part in blob degree zero, $\Sz(W; L)$, in a manner reminiscent to that of the skein modules of $3$-manifolds. The group $\Sz(W; L)$, which we call the {\em skein lasagna module}, is generated by certain objects called {\em lasagna fillings} of $W$ with boundary $L$, modulo an equivalence relation that captures the ``local'' cobordism relations in Khovanov-Rozansky homology. Once $\Sz(W; L)$ is defined, the higher degree groups $\S_b(W; L)$ for $b > 0$ are obtained from it using the machinery of blob homology from higher category theory \cite{MWblob}.

It is shown in \cite{MWW} that, when $W=B^4$, the invariant $\Sz(W; L)$ recovers the Khovanov-Rozansky homology $\KhRN(L)$, and $\S_b(W; L)=0$ for $b > 0$. While computing blob homology in general is rather daunting, the skein lasagna module $\Sz(W; L)$ has a relatively simple definition.  Our goal here is to describe $\Sz(W; L)$ for a large class of non-trivial $4$-manifolds $W$ and links $L \subset \del W$.

Precisely, we will be concerned with {\em $2$-handlebodies}, that is, four-dimensional manifolds $W$  obtained from $B^4$ by attaching $n$ $2$-handles. A Kirby diagram for such a manifold consists of a framed, $n$-component link $K \subset S^3$. For every homology class $\alpha \in H_2(W;\Z) \cong \Z^n$, we define the {\em cabled Khovanov-Rozansky homology} $\cKhRNa(K)$ as the direct sum of the Khovanov-Rozansky homologies of an infinite collection of cables of $K$, modulo a certain equivalence relation. (The exact definition is given in Section~\ref{sec:cabled}.)

The skein lasagna module naturally decomposes according to the relative homology classes of lasagna fillings:
$$\Sz(W;L)=\bigoplus\limits_{\alpha\in H_2(W,L;\Z)} \Sz(W;L,\alpha).$$

Our main result is the following.

\begin{theorem}\label{2handles}
Let $W$ be the 4-manifold obtained from attaching $2$-handles to $B^4$ along a framed $n$-component link $K$.  For each $\alpha \in H_2(W;\Z) \cong \Z^n$, we have an isomorphism
\begin{align*}
\Phi: \cKhRNa(K) \xrightarrow{\phantom{a}\cong\phantom{a}} \Sz(W;\emptyset,\alpha).
\end{align*}
\end{theorem}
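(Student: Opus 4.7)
The plan is to define $\Phi$ explicitly on representatives, and then construct a two-sided inverse $\Psi$ via a ``standardization'' procedure for lasagna fillings.

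For the definition of $\Phi$: a representative of a class in $\cKhRNa(K)$ is an element $x \in \KhRN(K_\beta)$, where $K_\beta$ denotes a cable of $K$ with signed strand multiplicity data $\beta$ whose per-component totals equal the components of $\alpha$. I send this to the lasagna filling whose single input ball $B$ sits in the interior of $B^4 \subset W$ containing $K_\beta$ decorated with $x$, and whose surface extends $K_\beta$ outward by the corresponding signed parallel copies of the core disks of the $2$-handles. This filling has empty outer boundary in $\del W$ and represents $\alpha \in H_2(W)$. That $\Phi$ descends to $\cKhRNa(K)$ amounts to checking that each generator of the defining equivalence in the cabled KhR theory (relating different cables $\beta$ and different $\KhRN$ representatives) is realized by an isotopy or local cobordism of the resulting filling, which follows from how the cabled theory is set up in Section~\ref{sec:cabled}.

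For the inverse $\Psi$: given a lasagna filling $F=(\Sigma,\{(B_i,x_i)\})$ of $W$ in class $\alpha$, first isotope all input balls into a single fixed ball $B_0 \subset B^4$ and fuse them using tube cobordisms, obtaining an equivalent filling with one input ball carrying a class in $\KhRN$ of some link in $\del B_0 \cong S^3$. Second, isotope the remainder of $\Sigma$, which lies in the $2$-handle region $W \setminus B^4$, into standard form: a disjoint union of oriented parallel copies of the core disks whose signed counts realize $\alpha$. The key geometric input is that any properly embedded oriented surface in a single $2$-handle $D^2 \times D^2$ with boundary on the attaching region can be brought to such a collection of parallel core disks, after possibly introducing canceling $\pm$ pairs of core-parallel disks tubed to $\Sigma$. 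After standardization, the filling is manifestly of the form produced by $\Phi$ from a class $x \in \KhRN(K_\beta)$ for some $\beta$ compatible with $\alpha$, which defines $\Psi$.

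The principal obstacle will be showing $\Psi$ is well-defined, i.e.\ that any two standardizations of a given filling yield the same class in $\cKhRNa(K)$. The ambiguity has two sources: the fusion of the input balls depends on the choice of merge order and tube paths, which must be absorbed by the local cobordism relations already present inside $\KhRN$ on the single-ball side; and the isotopy of $\Sigma$ across the $2$-handles is non-unique, with the slack being generated by handle-slide type moves, stabilization by canceling $\pm$ cable-strand pairs, and local KhR relations applied near the cable. I would verify step by step that each such move matches a defining relation of $\cKhRNa(K)$. Once this correspondence is in hand, $\Psi \circ \Phi = \Id$ is immediate on representatives by construction, and $\Phi \circ \Psi = \Id$ holds because a standardized filling is already in the image of $\Phi$. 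The whole argument relies crucially on the assumption that $W$ has no $1$- or $3$-handles, so that all surface isotopies take place either in $B^4$ or inside individual $2$-handles, where the local moves are manageable.
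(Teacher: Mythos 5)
Your proposal takes essentially the same approach as the paper: define $\Phi$ on cable generators via the standard lasagna filling (single input ball in the $0$-handle carrying a cable of $K$, surface given by parallel core disks), and construct an inverse by standardizing an arbitrary filling — merging input balls, isotoping the surface to meet the $2$-handles only in core-parallel disks, and reading off an element in the appropriate $\KhRN$ of a cable.

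However, you leave the crucial verification — that the ambiguity in standardization is accounted for by exactly the defining relations of $\cKhRNa(K)$ — as a to-do (``I would verify step by step that each such move matches a defining relation''). That verification is the real content of the inverse direction, and the paper carries it out by a one-parameter-family argument. Given two standardizations related by an isotopy $\Sigma_t$, one forms $Y = \bigcup_t \{t\}\times\Sigma_t \subset [0,1]\times W$, intersects it with $[0,1]\times G_i$ for each cocore $G_i$, and arranges the projection $\pi_i: Y\cap([0,1]\times G_i) \to [0,1]$ to be Morse. Between critical values of $\pi_i$ the filling changes only by a braid action $\beta_i(b)$; at each critical value a pair of oppositely signed intersection points with $G_i$ is created or annihilated, which is precisely the move $E_w \leftrightarrow E'_w$ relating the relations $\psi^{[m]}_i(v)\sim 0$ ($m<N-1$) and $\psi^{[N-1]}_i(v)\sim v$. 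Your list of ``slack'' moves is also slightly off: ``handle-slide type moves'' does not correspond to anything here (the $2$-handles are fixed, so nothing is slid; handleslides would only arise if you were comparing two different Kirby diagrams for $W$, which is not in scope). The only moves needed are the braid action and the creation/annihilation of $\pm$ intersection pairs, i.e.\ exactly the two kinds of relations in Definition~\ref{def:cabled}.
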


In general, if we want to apply Theorem~\ref{2handles} to specific examples, we run into the difficulty of  calculating Khovanov-Rozansky homology for an infinite family of cables. Nevertheless, we can do an explicit calculation when $K$ is the $0$-framed unknot, so that $W=S^2 \times D^2$.

\begin{theorem}
\label{thm:S2D2}
The skein lasagna module $\Sz(S^2 \times D^2; \emptyset)$ is supported in homological degree $0$ and has the structure of a commutative ring. We have a ring isomorphism
$$\S_{0,0,*}(S^2 \times D^2; \emptyset)\cong \Z[A_1,\dots, A_{N-1}, A_0, A_0^{-1}]$$
where the $A_k$ have quantum degree $-2k$. Under this isomorphism, the subgroup of lasagna fillings with relative homology class $\alpha\in\Z$ is identified with the subgroup of homogeneous polynomials of degree $\alpha$.
\end{theorem}

In particular, for $N=2$ (which corresponds to Khovanov homology), we have
$$\St_{0,0,j} (S^2 \times D^2; \emptyset, \alpha) \cong \begin{cases}
\Z & \text{if } j = -2k, \ k \geq 0 \\
0  & \text{otherwise.}
\end{cases}$$

When $N =2$, we also get some partial information for the $p$-framed unknot for $p \neq 0$. Then, $W$ is the $D^2$-bundle over $S^2$ with Euler number $p$, which we denote by $D(p)$.

\begin{theorem}
\label{thm:Dp}
For $p > 0$ and $N=2$, the part of the skein lasagna module of $D(p)$ that lies in class $\alpha = 0$ and homological degree $0$ is 
$$\St_{0,0,*} (D(p); \emptyset, 0) = 0.$$
On the other hand, for $p < 0$ we have
$$\St_{0,0,j} (D(p); \emptyset, 0)=\begin{cases} \Z & \text{if } \ j=0,\\
0 & \text{otherwise.}\end{cases}$$
\end{theorem}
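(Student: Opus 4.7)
The plan is to apply Theorem~\ref{2handles} to identify $\St_{0,0,*}(D(p); \emptyset, 0)$ with the homological-degree-$0$ part of $\cKhR_{2,0}(U_p)$, where $U_p$ denotes the $p$-framed unknot. This reduces the problem to analyzing a quotient of $\bigoplus_C \KhR_2(C)$ by the cobordism relations arising from the $2$-handle, where $C$ runs over cables of $U_p$ of algebraic winding $0$. The empty cable contributes $\Z$ in bidegree $(0,0)$; call its generator $g$. The nontrivial $(k,-k)$-cables are two- (or more-) component links in $S^3$, whose Khovanov homology in homological degree $0$ admits an explicit description.

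For $p<0$, the plan is to show that (i) every contribution in homological degree $0$ from a nontrivial $(k,-k)$-cable is identified, via a cobordism through the $2$-handle, with a multiple of $g$, and (ii) $g$ itself is not killed by any relation, since the cobordisms induced by a $2$-handle of negative self-intersection do not produce a unit multiple of $g$ starting from zero. Together these give $\cKhR_{2,0}(U_p) \cong \Z\cdot g$ in homological degree $0$, concentrated at quantum degree $j=0$.

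For $p>0$, the plan is to exhibit an explicit relation that kills $g$. The key geometric ingredient is that the core of the $p$-framed $2$-handle is a disk of self-intersection $+p$ in $D(p)$. Taking the $(1,-1)$-cable $L \subset S^3$ of $U_p$ (a two-component link with linking number $-p$), I will compare two fillings $L \to \emptyset$ inside $D(p)$: one consisting of a saddle merging the two strands followed by caps, all inside $B^4$, and one built from two parallel copies of the $2$-handle core meeting each other in $|p|$ points. Both yield lasagna fillings of $(D(p),\emptyset)$ and must be identified in $\cKhR_{2,0}$. A Bar-Natan-style dotted-cobordism evaluation on the $2$-handle surface, whose closed component is a sphere of self-intersection $+p$, then produces a relation of the form $g = 0$. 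The main obstacle here will be verifying that this relation kills $g$ outright rather than merely some proper multiple $ng$: this requires a careful computation of the two Khovanov cobordism maps---with their grading shifts and signs---and the identification of a class in $\KhR_2(L)$ on which the two maps differ by exactly a unit multiple of $g$. The asymmetry with $p<0$ is controlled by the Frobenius algebra $\Z[x]/(x^2)$, where a sphere with $\geq 2$ dots evaluates to zero, so the nontrivial relation only appears on the positive side.
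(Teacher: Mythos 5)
Your reduction to $\cKhR_{2,0}(U,p)$ via Theorem~\ref{2handles} is correct and matches the paper. But both directional arguments you sketch have genuine gaps, and the geometric picture proposed for $p>0$ is actually wrong.

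For $p>0$: the two parallel copies of the 2-handle core, taken with the $p$-framing (as they must be, since the cable $K(1,1)$ is defined using the framing), are \emph{disjoint} embedded disks in $D^2\times D^2$---they do not meet each other in $|p|$ points. The only closed sphere that appears in the proof of Theorem~\ref{2handles} is the one in \eqref{eq:sphere}, which bounds a ball in the 2-handle and hence has self-intersection $0$, not $p$; it is the reason the ``push a disk into the cocore'' move is an isotopy at all. A sphere of self-intersection $p$ \emph{does} arise (core plus Seifert disk), but it represents a generator of $H_2(D(p))$ and is irrelevant in the class $\alpha=0$ sector. So the proposed ``Bar-Natan dotted evaluation on a self-intersection-$p$ sphere'' is not a relation available in $\Sz(D(p);\emptyset,0)$. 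Your intuition that the asymmetry is governed by $X^2=0$ in $\A=\Z[X]/\langle X^2\rangle$ is in the right spirit, but the actual mechanism in the paper is computational: one identifies $\KhR_2^{0,*}(T'_{2n,2np})$ with the center $Z(H^n)$ of Khovanov's arc ring via Rozansky's Hochschild-cohomology description of Khovanov homology in $S^1\times S^2$ (see \eqref{eq:KhRp+}); one computes $\phi$ explicitly as multiplication by $\pm X_{2n+1}X_{2n+2}$ (Proposition~\ref{prop:plus}); and one uses $X_i^2=0$ together with the fact that $Z(H^n)$ is supported in degrees $\leq 2n$ to conclude every summand dies. Moreover, your plan to show only that $g\sim 0$ is not enough: you must also kill contributions in negative quantum degrees coming from nonempty cables; these arise from $Z(H^n)_{2k}$ with $0<k\leq n$ and are not multiples of $g$ in any obvious way.

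For $p<0$: both claims you highlight, (i) that every homological-degree-$0$ class in a nontrivial cable is ``identified with a multiple of $g$'' via a cobordism through the handle, and (ii) that $g$ is not killed, are assertions, not proofs. The $\psi$-relations are \emph{annihilations} ($\psi(v)\sim 0$), not identifications with $g$, so (i) needs a real argument; and the higher quantum degrees of $Z(H^n)^\vee$ have rank $\binom{2n}{k}-\binom{2n}{k-1}$, which grows with $n$. The paper's proof here is the substantial one: it requires computing the dual maps on $Z(H^n)^\vee$ (Proposition~\ref{prop:minus}), noting that $\phi$ in degree $0$ is an isomorphism (so $g$ survives in the colimit), and then invoking the generation of $Z(H^{n+1})^\vee_{2k}$ ($k>0$) by elements $f_{\mm}$ indexed by balanced partial matchings (Proposition~\ref{prop:balancedmatch}), each of which can be sent by a braid into the image of $\psi$ and hence dies. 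None of this is visible in your sketch. In short, the missing ingredient in both directions is the explicit description of the degree-$0$ torus link homology $\KhR_2^{0,*}(T'_{2n,2np})$ in terms of $Z(H^n)$ and the formulas for the cobordism maps $\psi,\phi$ acting on it; without that, neither ``$g\sim 0$'' for $p>0$ nor ``everything collapses to $\Z\cdot g$'' for $p<0$ can be established.
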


While Theorem~\ref{2handles} was formulated for the case where the link $L \subset \del W$ is empty, we can also handle the case of ``local'' links in $\del W$, that is, those contained in a ball $B^3 \subset \del W$. Indeed, we have the following tensor product formula for boundary connected sums. To state it, it is convenient to work with coefficients in a field $\k$, and write $\S(W; L;\k)$ for the corresponding skein lasagna module.

\begin{theorem}\label{connectsum}
Let $W_1$ and $W_2$ be 4-manifolds with framed links $L_i\subset \partial W_i$ and let $W_1\natural W_2$ denote their boundary connected sum along specified copies of $B^3\subset\partial W_i$ away from the links $L_i$. Then, 
\begin{align*}
\Sz(W_1\natural W_2; L_1\cup L_2; \k)\cong \Sz(W_1; L_1; \k)\otimes \Sz(W_2; L_2; \k)
\end{align*}
\end{theorem}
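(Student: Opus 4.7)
The forward direction is straightforward: define
$$\Phi: \Sz(W_1; L_1; \k) \otimes_\k \Sz(W_2; L_2; \k) \longrightarrow \Sz(W_1 \natural W_2; L_1 \cup L_2; \k)$$
by sending a tensor of lasagna filling classes to the class of their disjoint union, with each $F_i$ placed inside its $W_i$ half of $W_1 \natural W_2$. Well-definedness is immediate, since the local cobordism relations defining $\Sz$ are supported in small $4$-balls, which can be kept entirely within $W_1$ or $W_2$.

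To construct an inverse $\Psi$, let $D \subset W_1 \natural W_2$ be the canonical separating $B^3$ arising from the boundary connected sum, with $\partial D \subset \partial(W_1 \natural W_2)$; cutting along $D$ recovers $W_1$ and $W_2$. Given a lasagna filling $F$ with boundary $L_1 \cup L_2$, a standard transversality and isotopy argument lets us arrange that every input ball of $F$ lies entirely in $W_1$ or $W_2$, and that the surface $\Sigma$ meets $D$ transversally in a framed link $L_0 \subset D$ with trivial product collar $L_0 \times [-\epsilon, \epsilon]$. Cutting along $D$ then produces lasagna fillings $F_1, F_2$ of $W_1, W_2$ with respective boundaries $L_i \cup L_0$, where $L_0$ now sits inside a local $B^3 \subset \partial W_i$.

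The decisive step is to resolve the collar cylinder into a sum of caps on each side. The field hypothesis enters here: over $\k$, the Khovanov--Rozansky homology $\KhRN(L_0; \k)$ is finite-dimensional, and the cup cobordism in $B^4$ induces a perfect pairing $\KhRN(L_0; \k) \otimes \KhRN(\overline{L_0}; \k) \to \k$. Dually, choosing a homogeneous basis $\{e_i\}$ of $\KhRN(L_0; \k)$ and its dual basis $\{e_i^*\} \subset \KhRN(\overline{L_0}; \k)$, the identity cylinder cobordism on $L_0$ is represented in the skein lasagna module of a $B^4$ by $\sum_i e_i \otimes e_i^*$. Since the defining relations of $\Sz$ allow such local substitutions inside a $4$-ball containing the collar of $D$, this rewrites $F$ as $\sum_i F_1^{(i)} \sqcup F_2^{(i)}$, where each $F_j^{(i)}$ is a lasagna filling of $W_j$ with an added input ball at the cut labeled by the appropriate basis element, and we set $\Psi([F]) := \sum_i [F_1^{(i)}] \otimes [F_2^{(i)}]$. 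The main obstacle is justifying this resolution---establishing the perfect pairing (which is precisely why field coefficients are essential) and checking that $\Psi$ is independent of the choices of separating ball, isotopy, and basis. Once these are in place, the identities $\Phi \circ \Psi = \Id$ and $\Psi \circ \Phi = \Id$ follow from the standard zigzag identities for a dual pair of bases, which hold at the level of cobordisms in a $B^4$ and descend to $\Sz$.
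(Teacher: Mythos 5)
Your approach matches the paper's: define the forward map by disjoint union of fillings, and construct the inverse by cutting the surface transversely along the separating $B^3$ and resolving the cut neck into a sum over dual bases (the paper isolates this as a ``neck-cutting'' lemma, which is exactly your $\sum_i e_i \otimes e_i^*$ resolution, with field coefficients used to guarantee the pairing $\KhRN(L_0;\k)\otimes\KhRN(\overline{L_0};\k)\to\k$ is perfect). However, you explicitly defer the well-definedness of $\Psi$ as ``the main obstacle,'' and this is precisely where essentially all of the work in the paper's proof happens --- it is not a routine check. Two independence claims require genuine argument and you should be aware of what they involve. First, when an input ball of $F$ is isotoped across the separating ball $B$, one compares cutting along $B$ with cutting along a nearby parallel $B'$; both are shown equal to the result of cutting at \emph{both} $B$ and $B'$ via the neck-cutting lemma. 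Second, if the isotopy changes the transverse intersection link $J_{(t)}=\Sigma_{(t)}\cap B$, one cannot directly compare the two cut fillings: the paper builds an auxiliary cobordism $J\subset[0,1]\times B$ from $J_{(0)}$ to $J_{(1)}$, inserts it into a collar of $B$ to form an intermediate filling $F_J$ that is isotopic to $F_{(0)}$ rel $W_1$ and to $F_{(1)}$ rel $W_2$, and then compares cutting $F_J$ at $\{0\}\times B$ and at $\{1\}\times B$, again via the two-cut trick. Your mention of ``zigzag identities'' applies to showing $\Phi$ and $\Psi$ are mutually inverse (which the paper dismisses as immediate), not to well-definedness. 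So the plan is right and the key lemma is correctly identified, but the argument as written has a real gap at the step you flagged: without the $B$-versus-$B'$ and $F_J$ constructions (or an equivalent), $\Psi$ is not shown to be a well-defined map.
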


Applying Theorem~\ref{connectsum} to $(W_1;L_1)=(W;\emptyset)$ and $(W_2;L_2)=(B^4;L)$, we obtain:

\begin{corollary}
Let $W$ be a 4-manifold and $L\subset B^3\subset\partial W$ a framed link contained within a ball in the boundary of $W$. Then we have
\begin{align*}
\Sz(W;L; \k)\cong \Sz(W;\emptyset; \k)\otimes \KhRN(L; \k).
\end{align*}
\end{corollary}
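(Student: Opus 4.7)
The plan is to derive the corollary as an essentially immediate consequence of Theorem~\ref{connectsum}, together with the identification $\Sz(B^4; L) \cong \KhRN(L)$ from \cite{MWW} recalled earlier in the introduction. The only real content is unpacking the geometry behind the boundary connected sum.

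First I would set $(W_1, L_1) = (W, \emptyset)$ and $(W_2, L_2) = (B^4, L)$. By hypothesis, $L \subset B^3 \subset \partial W$; one checks that the boundary connected sum $W \natural B^4$ performed along this distinguished $B^3 \subset \partial W$ and a small $B^3 \subset \partial B^4 = S^3$ disjoint from $L$ is diffeomorphic to $W$, and that under this diffeomorphism the framed link $\emptyset \cup L \subset \partial(W \natural B^4)$ is identified with the original framed link $L \subset \partial W$. This identification is natural in the sense needed for skein lasagna modules (it sends lasagna fillings to lasagna fillings), so it induces an isomorphism $\Sz(W \natural B^4; \emptyset \cup L; \k) \cong \Sz(W; L; \k)$.

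Next I would apply Theorem~\ref{connectsum} to get
\begin{align*}
\Sz(W; L; \k) \cong \Sz(W \natural B^4; \emptyset \cup L; \k) \cong \Sz(W; \emptyset; \k) \otimes \Sz(B^4; L; \k).
\end{align*}
Finally, invoking the Morrison--Walker--Wedrich identification $\Sz(B^4; L; \k) \cong \KhRN(L; \k)$ yields the stated formula.

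There is no real obstacle: the one point that requires a moment's thought is the geometric identification $W \natural B^4 \cong W$ preserving the relevant boundary link, but this is standard since boundary connected sum with $B^4$ amounts to gluing a collar to $W$ along a $B^3$. Everything else is formal, and the argument works verbatim for the statement as given with coefficients in a field $\k$.
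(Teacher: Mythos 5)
Your proof is correct and follows exactly the paper's approach: apply Theorem~\ref{connectsum} with $(W_1, L_1) = (W, \emptyset)$ and $(W_2, L_2) = (B^4, L)$, then invoke $\Sz(B^4; L; \k) \cong \KhRN(L; \k)$. The extra discussion of why $W \natural B^4 \cong W$ with $L$ preserved is a reasonable unpacking of a step the paper leaves implicit.
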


Furthermore, we can study skein lasagna modules associated to closed $4$-manifolds. If the boundary  of a $2$-handlebody $W$ is $S^3$, by attaching a ball we obtain a simply connected smooth $4$-manifold $X$. We can associate to $X$ the skein lasagna module $\Sz(X; \emptyset)$. 
\begin{proposition}
\label{prop:4h}
If $X$ a closed smooth $4$-manifold, then $$\Sz(X; \emptyset)\cong \Sz(X\setminus B^4;\emptyset).$$ 
\end{proposition}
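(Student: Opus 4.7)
The natural candidate for the isomorphism is the inclusion-induced map $\Phi : \Sz(X \setminus B^4; \emptyset) \to \Sz(X; \emptyset)$, and the plan is to show that $\Phi$ is both surjective and injective.

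Surjectivity should come almost for free. Any lasagna filling $\mathcal{L}$ of $X$ has compact support: finitely many input $4$-balls together with a compact properly embedded surface. The complement of this support in $X$ is therefore open and non-empty, so it contains some smoothly embedded $4$-ball $B' \subset X$. Since any two smoothly embedded $4$-balls in a connected $4$-manifold are ambient isotopic, I would apply an ambient isotopy of $X$ carrying $B'$ onto the designated $B^4$; this moves $\mathcal{L}$ entirely into $X \setminus B^4$ without altering its class in $\Sz(X; \emptyset)$.

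For injectivity, the plan is to unpack the equivalence relation defining the skein lasagna module. It is generated by three kinds of moves: (i) ambient isotopy in the ambient $4$-manifold, (ii) multilinearity in the labels of the input balls, and (iii) the inflation/deflation move that adds or removes an input ball along a cobordism inside a small $4$-ball. Moves (ii) and (iii) are strictly local, since they take place inside an input ball, and if the filling already lives in $X \setminus B^4$ the relevant input ball can be taken to lie there as well, so such a move automatically gives an equivalence in $\Sz(X \setminus B^4; \emptyset)$. The only remaining case is move (i): an ambient isotopy of $X$ relating two fillings in $X \setminus B^4$ which might sweep part of the filling through $B^4$ at intermediate times.

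The main obstacle will be handling this last case. The strategy I would follow is to subdivide the isotopy into many small sub-isotopies, each supported in a small embedded ball $D \subset X$. Those $D$ contained in $X \setminus B^4$ are harmless. For a $D$ whose support meets $B^4$, I would use an inflation move to enclose the portion of the filling being moved inside a tiny auxiliary input ball, realize the sub-isotopy inside a copy of that input ball pushed into $X \setminus B^4$, and then deflate. Concatenating these steps yields a sequence of equivalences in $\Sz(X \setminus B^4; \emptyset)$ between the two endpoint fillings. The delicate point I would spend the most effort making rigorous is verifying that this inflation--isotopy--deflation procedure can be carried out coherently along the whole isotopy, so that the endpoints of the resulting chain of equivalences actually match the given fillings $\mathcal{L}_1, \mathcal{L}_2$; this boils down to the locality of the construction of $\Sz$, which is already built into the definition.
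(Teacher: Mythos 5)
Your surjectivity argument is sound, if slightly different in flavor from the paper's: you find a ball in the complement of the compact support of the filling and isotope it onto the designated $B^4$, whereas the paper phrases the same point via transversality of $\Sigma$ with the cocore (a point) of the $4$-handle. Both are fine.

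The injectivity half is where you have a genuine gap. The paper's argument is a one-line dimension count: an equivalence between two fillings is realized by a generic one-parameter family of surfaces, which is $3$-dimensional inside $[0,1]\times X$; the family of cocores $[0,1]\times \{pt\}$ is $1$-dimensional; since $3+1<5$ they can be made disjoint, so the entire equivalence happens away from a neighborhood of the point and hence in $X\setminus B^4$. Your proposal replaces this with an ``inflation--isotopy--deflation'' scheme whose central step is never actually defined. The inflation/deflation move is a very specific operation: you may replace an input ball with label $v$ by any lasagna filling of a $4$-ball that evaluates to $v$. It does not give you license to ``enclose the portion of the filling being moved inside a tiny auxiliary input ball'' and then transport that ball elsewhere --- the move is constrained to happen inside a fixed ball, rel boundary, and preserving the evaluation; it is not an arbitrary excision-and-reinsertion of a piece of the surface. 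Moreover, even if the move could be made sense of, you would still need to argue that the resulting chain of equivalences assembles coherently, which you explicitly flag as ``the delicate point'' but do not resolve. In short, your injectivity argument substitutes an unjustified construction for what is really just a transversality statement, and as written it does not close the gap. You should instead observe directly that the relevant one-parameter family of surfaces can be made disjoint from the cocore point of the $4$-handle by general position, and then deformation-retract into $X\setminus B^4$.
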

In particular, we see that $\Sz(S^4; \emptyset)\cong\Z$.

It is an open question \cite[Problem 4.18]{Kirby} whether every closed, smooth, simply connected four-manifold $X$ admits a perfect Morse function or, equivalently, a Kirby diagram without 1-handles or 3-handles; i.e., whether $X\setminus B^4$ is a $2$-handlebody. In practice, many four-manifolds are known to be of this form. The list of such manifolds include:
\begin{itemize}
\item $\mathbb{CP}^2$ and $S^2 \times S^2$;
\item The elliptic surfaces $E(n)$, including the K3 surface $E(2)$; see for example \cite[Figure 8.15]{GS};
\item More generally, the log transforms $E(n)_p$; see \cite[Corollary 8.3.17]{GS};
\item The Dolgachev surface $E(1)_{2,3}$ and a few other elliptic surfaces of the form $E(n)_{p,q}$; see \cite{AkbulutDolgachev}, \cite{YasuiElliptic};
\item Smooth hypersurfaces in $\CP^3$; see for example \cite[Section 12.3]{AkbulutBook};
\item The cyclic $k$-fold branched covers $V_{k}(d) \to \CP^2$ over curves of degree $d$, with $k |d$; cf. \cite[Section 12.3]{AkbulutBook};
\item The Lefschetz fibrations $X(m,n)$ and $U(m,n)$ obtained as branched covers over curves in Hirzebruch surfaces; cf. \cite[Figures 8.31 and 8.32]{GS}.
\end{itemize}

Theorem~\ref{2handles}, combined with Proposition~\ref{prop:4h}, is a step towards understanding  the skein lasagna modules for $4$-manifolds from the above list. In particular, when $X = \mathbb{CP}^2$ or $\overline{\mathbb{CP}^2}$, we have a Kirby diagram  with a single $(\pm 1)$-framed unknot, and Theorem~\ref{thm:Dp} tells us its $N=2$ skein lasagna module in class $\alpha=0$ and homological degree zero:
\begin{equation}
\label{eq:CP2}
 \St_{0,0,*}(\mathbb{CP}^2; \emptyset, 0) =0, \ \ \ \  \St_{0,0,0}(\overline{\mathbb{CP}^2}; \emptyset, 0) \cong \Z.
 \end{equation}

For an invariant to be effective at detecting manifolds, one needs to be able to extract finite data from it. The calculations above indicate that the skein lasagna modules can have infinite rank overall, but may be finitely generated when we fix the bi-grading and the class $\alpha$.

\begin{question}
Is it true that for every $4$-manifold $W$, framed link $L \subset W$, class $\alpha \in H_2(W, L;\Z)$, and values $i, j \in \Z$, the skein lasagna module $\S_{0, i, j}(W; L, \alpha)$ is finitely generated?
\end{question}

Note that for the skein modules of closed $3$-manifolds, a finite dimensionality result was recently proved by Gunningham, Jordan and Safronov \cite{CJS}.

A more ambitious problem is the following:
\begin{question}
Can the invariant $\S_{0}(W; L)$ detect exotic smooth structures on the $4$-manifold $W$?
\end{question}

One indication that the answer might be positive is the behavior under orientation reversal. It is known that unitary TQFTs (which are symmetric under orientation reversal) cannot detect exotic smooth structures on simply connected $4$-manifolds; see \cite{FKNSWW}. On the other hand, the Donaldson and Seiberg-Witten invariants (which can detect exotic smooth structures) are highly sensitive to the orientation. The skein lasagna modules $\S_{0}(W; L)$ are constructed from the cobordism maps on Khovanov-Rozansky homology, which are also sensitive to orientation. In fact, in the case $W=\mathbb{CP}^2$ and $N=2$, one can see explicitly from \eqref{eq:CP2} that the invariants of $W$ and $\overline{W}$ are quite different.

\medskip 
\textbf{Organization of the paper.} In Section~\ref{sec:slm} we review the definition of skein lasagna modules and establish some simple properties, including Proposition~\ref{prop:4h}. In Section~\ref{sec:cabled} we give the definition of cabled Khovanov-Rozansky homology. In Section~\ref{sec:2h} we prove Theorem~\ref{2handles}. In Sections~\ref{sec:D0} and \ref{sec:Dp} we do our explicit calculations from Theorems~\ref{thm:S2D2} and \ref{thm:Dp}. In Section~\ref{sec:ConnSum} we prove the connected sum formula, Theorem~\ref{connectsum}.

\medskip 
\textbf{Acknowledgements.} We are grateful to John Baldwin, Gage Martin, Sucharit Sarkar, Paul Wedrich and Mike Willis for helpful conversations. We also thank the referees for helpful comments on the paper.

\section{Skein lasagna modules}
\label{sec:slm}
\subsection{Conventions for Khovanov-Rozansky homology}
\label{sec:framings}
In this paper we follow \cite{MWW} and, for a framed link $L \subset \R^3$, we write 
$$\KhRN(L) = \bigoplus_{i, j\in \Z} \KhRN^{i, j}(L)$$ 
for the $\gl_N$ version of Khovanov-Rozansky homology. Here, $i$ denotes the homological grading and $j$ denotes the quantum grading. For a bi-graded group $W$, we will denote by $W\{l\}$ the result of shifting its second grading (in our case, the quantum grading) by $l$:
$$W\{l\}^{i,j} = W^{i, j-l}.$$ 
For example, the invariant of the $0$-framed unknot is the commutative Frobenius algebra
\begin{equation}
\label{eq:A}
\A := \KhRN(U,0) =H^*(\CP^{N-1})\{1-N\} =( \Z[X]/\langle X^N \rangle) \{1-N\},
\end{equation}
with $1$ in bidegree $(0, 1-N)$ and multiplication by $X$ changing the bidegree by $(0, 2)$. The comultiplication on $\A$ (which corresponds to a pair-of-pants cobordism) is given by
\begin{equation}
\label{eq:comult}
 \Delta(X^m) = \sum_{k=0}^{N-m-1} X^{k+m} \otimes X^{N-1-k}
 \end{equation}
and the counit on $\A$ is
\begin{equation}
\label{eq:counit}
 \epsilon(X^m)=0 \text{ for } 0 \leq m \leq N-2\ ;  \ \ \ \  \epsilon(X^{N-1}) = 1.
\end{equation}

Note that $\KhRN$ is an invariant of {\em framed} links. We will distinguish it from the original $\sl_N$ version of Khovanov-Rozansky homology from \cite{KhR}, which we denote by $\bKhRN$ and is independent of the framing. Further, $\bKhRN$ was defined only over $\Q$ whereas $\KhRN$ has coefficients in $\Z$. The two theories differ by a shift in quantum grading:
$$ \KhRN(L) \otimes \Q \cong \bKhRN(L)\{Nw\},$$
where $w$ is the writhe of a diagram in which the given framing of $L$ is the blackboard framing. 

In the case $N=2$, we also have the ordinary Khovanov homology $\Kh(L)$ defined in \cite{Kh}. As noted in \cite{KhR}, we have
$$  \bKhR_2^{i, j}(L) \cong \Kh^{i, -j}(\bL) \otimes \Q,$$
where $\bL$ is the mirror of $L$. Moreover, from \cite[Section 7.3]{Kh}, we know that the Khovanov complexes of $L$ and $\bL$ are related by duality, and therefore the Khovanov homologies are related by the universal coefficients theorem:
\begin{equation}
\label{eq:Ext}
 \Kh^{i, j} (\bL) \cong \Hom( \Kh^{-i, -j}(L), \Z) \oplus \Ext(\Kh^{1-i, -j}(L), \Z).
\end{equation}

In this paper we will not use $\bKhRN$. We will mostly work with $\KhRN$, but in Section~\ref{sec:Dp}  we will need to relate it to $\Kh$ because the relevant calculations in the literature are done in terms of $\Kh$. The relation between the two theories is given by a (non-canonical) isomorphism:
\begin{equation}
\label{eq:KhR2}
 \KhR_2^{i, j}(L) \cong \Kh^{i, -j-2w}(\bL).
\end{equation}
 
The usual Khovanov homology $\Kh(L)$ is functorial under cobordisms in $\R^3 \times [0,1]$, but only up to sign \cite{KhCob, Jacobsson}. On the other hand, the $\gl_2$ version and, more generally, the  $\gl_N$ homology $\KhRN(L)$, are functorial over $\Z$ \cite{Blanchet, ETW}. Furthermore, it is shown in \cite{MWW} that $\KhRN(L)$ is a well-defined invariant of framed links in $S^3$, and is functorial under framed cobordisms in $S^3 \times [0,1]$. Given a framed cobordism $\Sigma \subset S^3  \times [0,1]$ from $L_0$ to $L_1$, the induced map
$$ \KhRN(\Sigma) : \KhRN(L_0) \to \KhRN(L_1)$$
is homogeneous of bidegree $(0, (1-N)\chi(\Sigma))$. For $N=2$, this map agrees with the usual cobordism map 
$$\Kh(\overline{\Sigma}): \Kh(\bL_0)\to \Kh(\bL_1),$$ up to pre- and post-composition with the isomorphisms \eqref{eq:KhR2}. 

If we have an oriented manifold $S$ diffeomorphic to the standard $3$-sphere $S^3$, and a framed link $L \subset S$, we can define a canonical invariant $\KhRN(S,L)$ as in \cite[Definition 4.12]{MWW}. When $S$ is understood from the context, we will drop it from the notation and simply write $\KhRN(L)$. 

\subsection{Definition}
Let us review the construction of skein lasagna modules following \cite[Section 5.2]{MWW}. 

Let $W$ be a smooth oriented $4$-manifold and $L \subset \del W$ a framed link. A {\em lasagna filling} $F=(\Sigma, \{(B_i,L_i,v_i)\})$ of $W$ with boundary $L$ consists of
\begin{itemize}
\item A finite collection of disjoint $4$-balls $B_i$ (called {\em input balls}) embedded in the interior or $W$;
\item A framed oriented surface $\Sigma$ properly embedded in $W \setminus \cup_i B_i$, meeting $\del W$ in $L$ and meeting each $\del B_i$ in a link $L_i$; and
\item for each $i$, a homogeneous label $v_i \in \KhRN(\del B_i, L_i).$
\end{itemize}
The bidegree of a lasagna filling $F$ is 
$$ \deg(F) := \sum_i \deg(v_i) + (0,(1-N)\chi(\Sigma)).$$
If $W$ is a $4$-ball, we can upgrade the functoriality of $\Kh$ to define a cobordism map
$$ \KhRN(\Sigma): \bigotimes_i \KhRN(\del B_i, L_i) \to \KhRN(\del W, L)$$
and an evaluation
$$ \KhRN(F) := \KhRN(\Sigma)(\otimes_i v_i) \in \Kh(\del W, L).$$

We now define the skein lasagna module to be the bi-graded Abelian group
$$ \S_0(W; L) := \Z\{ \text{lasagna fillings $F$ of $W$ with boundary $L$}\}/\sim$$
where $\sim$ is the transitive and linear closure of the following relation: 
\begin{itemize}
\item Linear combinations of lasagna fillings are set to be multilinear in the labels $v_i$;
\item Furthermore, two lasagna fillings $F_1$ and $F_2$ are set to be equivalent if  $F_1$ has an input ball $B_1$ with label $v_1$, and $F_2$ is obtained from $F_1$ by replacing $B_1$ with another lasagna filling $F_3$ of a $4$-ball such that $v_1=\KhRN(F_3)$, followed by an isotopy rel boundary:
\[ \input{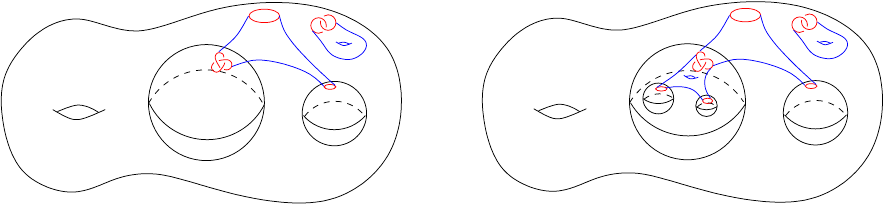_t}
\]
\end{itemize} 

\subsection{Homology classes of lasagna fillings}
Given a lasagna filling $F$ for $L\subset \partial W$ specified by the data $(\Sigma, \{(B_i,L_i,v_i)\})$, we denote by $[F]$ its equivalence class in $\Sz(W; L)$. We also define the homology class of $F$, denoted $\llbracket F \rrbracket \in H_2(W,L;\Z)$ by
\begin{align*}
\llbracket F \rrbracket = &  \bigl[\bigl(\Sigma,L\cup ( \cup_i L_i )\bigr)]\in H_2(W,L\cup (\cup_i \partial B_i);\Z)\cong H_2(W, L;\Z)
\end{align*}
where $\bigl[\bigl(\Sigma,L\cup ( \cup_i L_i )\bigr)]$ denotes the relative fundamental class of the surface $\Sigma$. If $F\sim F'$ are equivalent fillings in $\Sz(W; L)$, then $F$ and $F'$ agree up to isotopy outside of some disjoint balls. In particular, they are homologous relative to those balls, and thus homologous in $H_2(W,L;\Z)$. Thus, an equivalence class $[F]$ of lasagna fillings has a well-defined homology class $\llbracket F \rrbracket$.

Given $\alpha\in H_2(W,L;\Z)$, let $\Sz(W;L,\alpha)$ denote the subgroup of $\Sz(W;L)$ generated by fillings with homology class $\alpha$. Since $\Sz(W;L)$ is generated by lasagna fillings and the fillings are partitioned according to their homology class, we obtain a decomposition
\begin{align*}
\Sz(W;L)=\bigoplus\limits_{\alpha\in H_2(W,L;\Z)} \Sz(W;L,\alpha).
\end{align*}

\subsection{Adding 3- and 4-handles}
Smooth  $4$-manifolds admit handle decompositions, which are represented pictorially by Kirby diagrams \cite{GS}. To compute $\Sz(W; \emptyset)$ for $W$ an arbitrary smooth compact $4$-manifold, we would need to understand how adding $k$-handles to $W$ affects $\Sz(W;\emptyset)$. We will discuss $2$-handles in detail in Section~\ref{sec:2h}, and we cannot say much about $1$-handles. For now, we present the following result about $3$- and $4$-handles.

\begin{proposition}\label{3and4handles}
Let $i: W\to W'$ be the inclusion of a 4-manifold $W$ into $W'$. Then we have a natural map $$i_*:\Sz(W; \emptyset)\to \Sz(W';\emptyset).$$ If $W'$ is the result of a $k$-handle attachment to $W$, then $i_*$ is a surjection for $k=3$, and an isomorphism for $k=4$.
\end{proposition}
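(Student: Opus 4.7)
The plan is to define $i_*$ on generators by viewing a lasagna filling $F = (\Sigma, \{(B_i, L_i, v_i)\})$ of $W$ as the same data in $W'$ via the inclusion. Each elementary equivalence between fillings of $W$---multilinearity in the labels $v_i$, or replacement of an input ball by a filling of $B^4$---is a local operation supported inside a ball in $W \subset W'$, so it remains a valid equivalence in $W'$, and $i_*$ descends to equivalence classes. Naturality is immediate from this construction.

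To prove surjectivity for both $k = 3$ and $k = 4$, let $C$ denote the cocore of the attached $k$-handle, a properly embedded submanifold of dimension $4 - k \leq 1$ in $W'$. Given a lasagna filling $F$ of $W'$, a general-position argument first puts $\Sigma$ disjoint from $C$, since $\dim \Sigma + \dim C \leq 3 < 4$. Each input ball $B_i$ can then be slid off $C$ by an ambient isotopy of $W'$ supported near $B_i$: the cocore has codimension at least three, so its complement in a small $4$-ball around $B_i$ is path-connected, and this ambient isotopy carries the nearby surface and label data along. Once all of $F$ lies in $W' \setminus C$, the radial deformation retraction of the $k$-handle $D^k \times D^{4-k}$ minus its cocore $\{0\} \times D^{4-k}$ onto the attaching region $S^{k-1} \times D^{4-k} \subset \partial W$ extends to an ambient isotopy of $W'$ that pushes $F$ entirely into $W$, establishing surjectivity.

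For injectivity when $k = 4$, the cocore $C = \{c\}$ is a single point, and I would construct an inverse $j_* \colon \Sz(W'; \emptyset) \to \Sz(W; \emptyset)$ by applying the same procedure: push a representative into $W$ and take its class. Well-definedness on individual fillings holds because $W' \setminus \{c\}$ is $2$-connected, so any two pushing isotopies differ by a further isotopy inside $W$. To check $j_*$ descends to $\Sz(W'; \emptyset)$, note that any elementary equivalence in $W'$ takes place inside some ball, and this ball together with any replacing filling of $B^4$ inside it can be simultaneously isotoped off $c$ (since $c$ has codimension $4$) before being pushed into $W$, yielding a valid equivalence in $\Sz(W; \emptyset)$. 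The compositions $j_* \circ i_*$ and $i_* \circ j_*$ are then identities, since fillings already in $W$ are unchanged by the pushing isotopy up to equivalence, and any filling in $W'$ recovers itself when re-included after pushing. The main obstacle throughout is the ambient isotopy argument for input balls: shrinking an input ball directly would demand producing a concrete filling of the annular region $S^3 \times [0,1]$ with a prescribed $\KhRN$ evaluation, which need not exist, so one instead relies on the high codimension of $C$ to translate input balls off $C$ by compactly supported ambient isotopies of $W'$.
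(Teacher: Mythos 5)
Your proof takes essentially the same approach as the paper: both define $i_*$ by inclusion, use the dimension count $\dim\Sigma+\dim(\operatorname{cocore})<4$ to isotope the filling off the cocore and deformation retract into $W$ (surjectivity for $k=3,4$), and run the same argument in a one-parameter family to get injectivity for $k=4$. One small factual slip in your last sentence: shrinking an input ball is always a valid equivalence, since the annular filling can be taken to be the identity cobordism $L_i\times[0,1]$ with the original label on the smaller ball, whose induced map on $\KhRN$ is the identity; so the ambient-isotopy route is fine but not forced on you.
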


\begin{proof}
Let $[F]\in\Sz(W; \emptyset)$ by the class of a lasagna filling $F$ with surface $\Sigma$. Let $i(F)$ denote the filling $F$ viewed inside of $W'$. If $F \sim \widetilde{F}$, then clearly $[i(F)]=[i(\widetilde{F})]$. Therefore, we have a well-defined map $i_*: \Sz(W; \emptyset)\to \Sz(W';\emptyset)$ given by $i_*([F])=[i(F)]$.

We observe that $i_*$ is surjective if every surface $\Sigma'\subset W'$ can be isotoped to lie in $W$. Consider the case of $W'$ being the result of attaching a $k$-handle $h$ to $W$. Removing the cocore of $h$ from $W'$ produces a manifold that deformation retracts to $W$. In particular, if $\Sigma$ can be isotoped to not intersect the cocore of $h$, then $\Sigma$ can be isotoped to lie entirely in $W$. By transversality, this occurs when
\begin{align*}
\operatorname{dim}(\Sigma) + \operatorname{dim}(\operatorname{cocore}(h))<4\\
2+(4-k)<4\\
2<k
\end{align*}

When $k=3$ or $4$, the cocore is $1$- or $0$-dimensional, and hence embedded surfaces in $W'$ can be isotoped off the handle. Thus, if $W'$ is the result of attaching a 3-handle or 4-handle to $W$, then $i_*$ is surjective. 

If $i: W\to W'$ is a 4-handle addition, then surfaces in $W'$ can be isotoped off the handle even in a one-parameter family. Therefore, if two lasagna fillings are equivalent in $W'$, after we isotope them to lie in $W$ they are still equivalent in $W$. It follows that $i_*$ is injective, and therefore an isomorphism.
\end{proof}

\begin{proof}[Proof of Proposition~\ref{prop:4h}] This is an immediate corollary of Proposition~\ref{3and4handles}, because a closed $4$-manifold $X$ is obtained from $X \setminus B^4$ by attaching a $4$-handle.\end{proof}

\section{The cabled Khovanov-Rozansky homology}
\label{sec:cabled}
\subsection{The general definition}
Let $K\subset S^3$ be a framed oriented link with components $K_1,\dots, K_n$. Fix also two $n$-tuples of nonnegative integers, $k^-=(k_1^-,\dots,k_n^-)$ and $k^+=(k_1^+,\dots,k_n^+)$. Let 
$K(k^-,k^+)$ denote the framed, oriented link obtained from $k_i^-$ negatively oriented parallel strands to $K_i$ (where the choice of parallel strand is determined by the framing) and $k_i^+$ positively oriented parallel strands. The framing on each of the parallel strands is the same as the framing on the corresponding knots $K_i$.

To be more precise, using the framing, we get a diffeomorphism $f_i$ between a tubular neighborhood of $K_i$ and $S^1 \times D^2$. For each $i$, we pick distinct points 
$$x_1^-, \dots, x_{k_i^-}^-, x_1^+, \dots, x_{k_i^+}^+ \in D^2.$$
Then 
$$K(k^-,k^+) = \bigcup_i f_i^{-1}(S^1 \times \{x_1^-, \dots, x_{k_i^-}^-, x_1^+, \dots, x_{k_i^+}^+\})$$  

\begin{remark}
Suppose $n=1$ so that $K$ is a knot with some framing coefficient $p$. Then, as an unoriented link,  $K(k^-,k^+)$ is the $(p(k^-+k^+), (k^-+k^+))$ cable of $K$.
\end{remark}

Let $B_n$ be the braid group on $n$ strands, and $F: B_n \to S_n$ the natural homomorphism to the symmetric group. For $0 \leq k \leq n$, let $B_{k, n-k} = F^{-1}(S_k \times S_{n-k})$. Thus, if we view the braid group as the mapping class group of the punctured disk, then $$B_{k_i^-, k_i^+} \subseteq  B_{k_i^-+ k_i^+}$$ consists of those self-diffeomorphisms that take the set of the first $k_i^-$ punctures to itself.

A braid $b\in B_{k_i^-, k_i^+}$ gives a cobordism inside $D^2 \times [0,1]$. Taking the product of this cobordism with $S^1$, and using the identification between a neighborhood of $K_i \subset S^3$ and $S^1 \times D^2$, we get a cobordism $$\Sigma_b \subset S^1 \times D^2 \times [0,1] \subset S^3 \times [0,1]$$ from the cable $K(k^-,k^+)$ to itself. The associated cobordism map, $\KhR_N(\Sigma_b)$, gives an automorphism of $\KhR_N(K(k^-,k^+))$. In fact, the assignment $\beta_i: b\to \KhR_N(\Sigma_b)$ gives a group action on Khovanov-Rozansky homology
$$ \beta_i : B_{k_i^-, k_i^+}\to \Aut\bigl( \KhRN(K(k^-,k^+)) \bigr).$$

Let $e_i\in\Z^n$ denote the $i^{th}$ basis vector.  
Observe that two strands parallel to $K_i$, if they have opposite orientations, co-bound a ribbon band $R_i$ in $S^3$. By pushing $R_i$ into $S^3 \times [0,1]$ so that it is properly embedded there, removing a disk from $R_i$, and taking the disjoint union with the identity cobordisms on the other strands, we obtain an oriented cobordism $Z_i$ from $K(k^-,k^+) \sqcup U$ to $K(k^-+e_i, k^++e_i)$. Here, $U$ is the unknot and $\sqcup$ denotes split disjoint union. Observe that $\chi(Z_i)=-1$, and the framing on $K_i$ induces a framing on $Z_i$. By the discussion in Section~\ref{sec:framings}, there is a well-defined cobordism map
$$ \KhRN(Z_i) : \KhRN(K(k^-,k^+) \sqcup U) \to \KhRN(K(k^-+e_i, k^++e_i))$$
which changes the bi-grading by $(0,N-1)$. Note that
$$\KhRN(K(k^-,k^+) \sqcup U) \cong \KhRN(K(k^-,k^+)) \otimes \KhRN(U).$$
Recall from \eqref{eq:A} that $\A=\KhRN(U) \cong (\Z[X]/\langle X^N \rangle)\{1-N\}$.

Thus, the information in the map $\KhRN(Z_i)$ is encoded in $N$ maps
$$ \psi^{[m]}_i : \KhRN(K(k^-,k^+)) \to \KhRN(K(k^-+e_i, k^++e_i)), \ m=0,\dots, N-1$$
given by
\begin{equation}
\label{eq:phipsi}
 \psi^{[m]}_i(v) = \KhRN(Z_i)(v \otimes X^m).
 \end{equation}
Note that $\psi^{[m]}_i$ changes the bi-grading by $(0, 2m)$.

\begin{remark}
Let $\hZ_i$ be the cobordism from $K(k^-,k^+)$ to $K(k^-+e_i, k^++e_i)$ obtained from $Z_i$ by reintroducing the disk that was removed from the ribbon $R_i$. Since the map associated to a disk in Khovanov-Rozansky homology takes $1 \mapsto 1$, we see that $\psi^{[0]}_i = \KhRN(\hZ_i)$. More generally, $\psi^{[m]}_i$ is the cobordism map associated to $\hZ_i$ decorated with $m$ dots, in the sense of \cite[Example 2.1]{MWW}.
\end{remark}

\begin{remark}
For conciseness, we did not include the link $K$ and the values of $k^-$ and $k^+$ in the notation $\beta_i$, $R_i$, $Z_i$, $\psi^{[m]}_i$.
\end{remark}

Let $W$ be the $2$-handlebody obtained from $B^4$ by attaching handles along the link $K$. The  homology $H_2(W;\Z)$ is freely generated by the cores of the handles, capped with Seifert surfaces for each $K_i$. We identify $H_2(W;\Z) \cong \Z^n$ by letting the capped core of the $i$th handle correspond to $e_i$.

For $\alpha=(\alpha_1,\dots,\alpha_n)\in H_2(W;\Z) \cong\Z^n$, let $\alpha^+$ denote its positive part and $\alpha^-$ its negative part; i.e., $\alpha^+_i=\operatorname{max}(\alpha_i,0)$ and $\alpha^-_i=\operatorname{min}(\alpha_i,0)$. We also let $|\alpha| = \sum_i |\alpha_i|$. 

\begin{definition}
\label{def:cabled}
The cabled Khovanov-Rozansky homology of $K$ at level $\alpha$ is 
\[
\cKhRNa(K) =  \Bigl( \bigoplus\limits_{r\in\N^n} \KhRN(K(r-\alpha^-,r+\alpha^+))\{(1-N)(2r+|\alpha|)\} \Bigr)/ \sim
\]
where the equivalence $\sim$ is the transitive and linear closure of the relations
\begin{equation}
\label{eq:sim}
\beta_i(b)v \sim v, \ \ \psi^{[m]}_i(v) \sim 0 \text{ for } m < N-1,\ \ \psi^{[N-1]}_i(v) \sim v
\end{equation}
for all $i=1, \dots, n$; $b \in B_{r_i-\alpha^-, r_i+\alpha^+}$, and $v \in  \KhRN(K(r-\alpha^-,r+\alpha^+)).$
\end{definition}

Observe that the equivalence relation preserves the bi-grading, and hence there is an induced bi-grading on $\cKhRNa(K)$.

\begin{remark}\label{remark:ambiguity}
In principle, there are several different maps of the form $\psi_i^{[m]}$ (with the same domain and target), corresponding to different choices of the pair of oppositely oriented strands that co-bound $R_i$. However, these maps differ by post-composition with some $\beta_i(b)$. Therefore, when we divide by the relations \eqref{eq:sim}, which already include $\beta_i(b)v \sim v$, using one choice of $\psi_i^{[m]}$ is the same as using any other choice.
\end{remark}

\subsection{The cabled Khovanov homology}

When $N=2$, which is the case corresponding to Khovanov homology, Grigsby, Licata, and Wehrli \cite{GLW} showed that the braid group action on the cable of a knot factors through the symmetric group. In order to explain this fact, we first need to discuss orientations. Recall that Khovanov homology only depends on the orientation of a link through a shift in the quantum degree. This shift depends on the number of positive and negative crossings in the chosen orientation, which, for a cable of a knot, only depends on the number of strands oriented each way. Symmetries of $K$ induce maps on Khovanov homology, even if they are not symmetries of $K$ as an \emph{oriented} link.

Now, for a framed knot $K$, let $K(k^-,k^+)$ be an oriented cable with at least two components (for fewer components, the discussion of the braid group action is trivial). Number the strands so that the first $k^-$ are negatively oriented and the last $k^+$ are positively oriented. For $1\leq i< k^-+k^+$, choose a new orientation, if necessary, on $K(k^-,k^+)$ so that the $i^{\text{th}}$ and $(i+1)^{\text{st}}$ strands are oppositely oriented. Call this new oriented cable $K(\ell^-,\ell^+)$ , so that $\ell^-+\ell^+=k^-+k^+$ and $\ell^-,\ell^+\geq 1$. We have an isomorphism $\Kh(K(k^-,k^+))\cong\Kh(K(\ell^-,\ell^+))\{w\}$ for some shift $\{w\}$. Let $$\hZ(i):K(\ell^--1,\ell^+-1)\to K(\ell^-,\ell^+)$$ be the previously defined cobordism where we specify that $\hZ$ introduces the $i^{\text{th}}$ and $(i+1)^{\text{st}}$ components. Let $\hZ^r(i)$ be the reverse of $\hZ(i)$. Then, $\Kh(\hZ(i))\circ\Kh(\hZ^r(i))$ is an endomorphism of $\Kh(K(\ell^-,\ell^+))$. After canceling the shifts, it can also be viewed as an endomorphism of $\Kh(K(k^-,k^+))$. Let $\sigma_i$ be a generator of the braid group $B_{k^-+k^+}$ that interchanges the $i^{\text{th}}$ and $(i+1)^{\text{st}}$ strands. Then $\beta(\sigma_i)$ is an automorphism of $\Kh(K(k^-,k^+))$. With this understood, we have:
\begin{proposition}[Proposition 9 in \cite{GLW}]\label{prop:glw}
$$\beta(\sigma_i)=\operatorname{id}+\Kh(\hZ(i))\circ\Kh(\hZ^r(i))=\beta(\sigma_i^{-1})$$
as endomorphisms of $\Kh(K(k^-,k^+))$. In particular, the equation $\beta(\sigma_i)=\beta(\sigma_i^{-1})$ implies that $\beta$ factors through the symmetric group $S_{k^-+k^+}$.
\end{proposition}

\begin{remark}
Proposition~\ref{prop:glw} is stated in \cite{GLW} for the action of the braid group on sutured annular Khovanov homology, but the same proof works for Khovanov homology.
\end{remark}

Thus, when $K$ is a framed knot, we can replace the group $B_{r-\alpha^-, r+\alpha^+}$ in the definition of the cabled Khovanov homology by the product of symmetric groups, $S_{r-\alpha^-}\times S_{r+\alpha^+}$.

If we work over a ring $\k$ where $2$ is invertible, then we can use Proposition \ref{prop:glw} to simplify the cabled Khovanov-Rozansky homology even further. Let $\Kh(K;\k)$ denote the Khovanov homology of $K$ with coefficients in $\k$. We can define $\cKhR_{2,\alpha}(K;\k)$, the cabled Khovanov homology with coefficients in $\k$, by replacing the Khovanov homologies of the cables in Definition \ref{def:cabled} with their versions with coefficients in $\k$. With this notation, we have:

\begin{proposition}
Let $\k$ be a commutative ring where $2$ is invertible. Then the cabled Khovanov homology over $\k$ of a framed knot $K$ at level $\alpha$ is

\[
\cKhR_{2,\alpha}(K;\k) =  \Bigl( \bigoplus\limits_{r\in\N} \Kh(K(r-\alpha^-,r+\alpha^+);\k)\{-2r-|\alpha|\} \Bigr)/ \sim
\]
where the equivalence $\sim$ is the transitive and linear closure of the relations
$$ \beta(b)v \sim v, \ \psi^{[1]}(v)\sim v $$
for all $b \in S_{2r+|\alpha|}$, and for all $v \in  \Kh(K(r-\alpha^-,r+\alpha^+);\k).$
\end{proposition}
\begin{proof}
The equivalence relation defining $\cKhR_{2,\alpha}(K;\k)$ consists of the relations
\begin{equation}
\beta(b)v \sim v, \ \ \psi^{[0]}(v) \sim 0,\ \ \psi^{[1]}(v) \sim v
\end{equation}
for all $b \in B_{r-\alpha^-, r+\alpha^+}$, and $v \in  \Kh(K(r-\alpha^-,r+\alpha^+);\k)$. The group action $\beta$ factors through $S_{r-\alpha^-}\times S_{r+\alpha^+}$. Let $\sigma_i\in S_{2r+|\alpha|}$ denote the transposition that interchanges the $i^{\text{th}}$ and $(i+1)^{\text{st}}$ elements. If we number the components of $K(r-\alpha^-,r+\alpha^+)$ so that the first $r-\alpha^-$ are negatively oriented and the rest positively oriented, then the $\sigma_i$ for $i=1,\dots, r-\alpha^--1,r-\alpha^-+1,\dots,2r+|\alpha|-1$ generate the desired subgroup $S_{r-\alpha^-}\times S_{r+\alpha^+}$. Thus, it suffices to take the relations $\beta(\sigma_i)v \sim v$ for all such $i$.

The map $\psi^{[0]}$ is associated to a cobordism $\hZ$ that introduces two oppositely oriented components. As explained in Remark \ref{remark:ambiguity}, we can pick any two oppositely oriented components. So, we choose the components numbered $r-\alpha^-$ and $r-\alpha^-+1$. Then, by Proposition \ref{prop:glw}, we have
\begin{equation}\label{eq:glw}
\beta(\sigma_{r-\alpha^-})=\operatorname{id}+\psi^{[0]}\circ \Kh(\hZ^r).
\end{equation}
Observe that $\hZ^r\circ \hZ$ is the union of a torus and some cylinders. By \cite{GujralLevine}, we have $$\Kh(\hZ^r)\circ \Kh(\hZ)=2\operatorname{id}.$$ Since we are working over a ring where $2$ is a unit, this map is an isomorphism. In particular, $\Kh(\hZ^r)$ is surjective. Thus, by Equation \eqref{eq:glw}, the relations $\psi^{[0]}(w) \sim 0$ for all $w$ are equivalent to the relations $\beta(\sigma_{r-\alpha^-})v\sim v$ for all $v$. But since the relations defining $\cKhR_{2,\alpha}(K;\k)$ already include $\beta(\sigma_i)v\sim v$ for $i\neq r-\alpha^-$, adding the relation $\beta(\sigma_{r-\alpha^-})v\sim v$ generates the full symmetric group $S_{2r+|\alpha|}$. 
\end{proof}

\section{2-Handlebodies}
\label{sec:2h}
In this section we prove our main result, about the skein lasagna modules of 2-handlebodies. 

\begin{proof}[Proof of Theorem~\ref{2handles}]
We first define a map
\begin{align*}
\widetilde{\Phi} : \bigoplus\limits_{r\in\N^n} \KhRN(K(r-\alpha^-,r+\alpha^+))\{(1-N)(2r+|\alpha|) \}  \to \Sz(W;\emptyset,\alpha), \ \ \ \widetilde{\Phi}(v) = [F_v]
\end{align*}
as follows. Let $B$ be $4$-dimensional ball slightly smaller than the $0$-handle, contained in the interior of that handle. Given an element $v\in \KhRN(K(r-\alpha^-,r+\alpha^+))$, we define $\widetilde{\Phi}(v)$ to be the class of the lasagna filling $F_v$ with $B$ as the only input ball, with $B$ decorated with the framed, oriented link $K(r-\alpha^-,r+\alpha^+)$ and labeled by the element $v$, and with the surface given by the disjoint union of $r_i-\alpha_i^-$ negatively oriented discs parallel to the core of $i^{th}$ 2-handle and $r_i+\alpha_i^+$ positively oriented such discs (union over all $i$). We will denote these disks by $C_j^{i,\pm}$, where $1\leq j\leq r_i\pm\alpha_i^{\pm}$. Since the disks are contractible, they have unique framings. The homology class of this surface in $H_2(W,B;\Z) \cong H_2(W;\Z)$ is clearly $\alpha$ by construction. See Figure~\ref{fig:generator}.

\begin {figure}
\begin {center}
\input{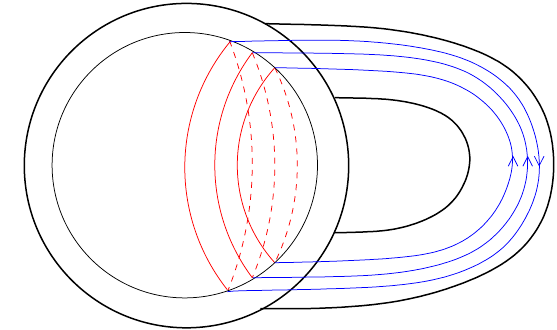_t}
\caption {A generator for $\Sz(W; \emptyset, 0)$.}
\label{fig:generator}
\end {center}
\end {figure}

We claim that, under the equivalence relation from Definition~\ref{def:cabled}, $\widetilde{\Phi}$ maps equivalent elements to the same class in $\Sz(W;\emptyset,\alpha)$. There are three types of relations to be checked.

First, consider the braid group action. Intuitively, this permutes the disks $C_j^{i,\pm}$ in the $2$-handle. More precisely, let $\Sigma_b$ be the cobordism from $K(r-\alpha^-,r+\alpha^+)$ to itself associated to a braid $b \in B_{k_i^-, k_i^+}$. We can then view the filling $F_v$ of $W$ as obtained from $F_{\beta_i(b)v}$ by inserting into $B$ a filling made of a smaller ball $B'$ and the surface $\Sigma_b$, with the input labeled by $v$. Therefore, $F_v$ and $F_{\beta_i(b)v}$ represent the same class in $\Sz(W;\emptyset,\alpha)$. 

Second, consider a slightly smaller ball $B'$ contained in $B$, so that the region between $B'$ and $B$ is a copy of $S^3 \times [0,1]$. We put in that region the cobordism $D \subset S^3 \times [0,1]$, from $K(r-\alpha^-,r+\alpha^+) \sqcup U$ to $K(r-\alpha^-, r+\alpha^+)$, which is simply the split disjoint union of the identity on $K(r-\alpha^-,r+\alpha^+)$ with a disk capping the unknot $U$. The cobordism map associated to the cap is the counit \eqref{eq:counit}. Therefore,
\begin{equation}
\label{eq:KhD}
 \KhRN(D)(v \otimes X^m)=0 \text{ for } m < N-1; \ \ \ \  \KhRN(D)(v \otimes X^{N-1}) = v.
 \end{equation}
For every $w \in \KhRN(K(r-\alpha^-,r+\alpha^+) \sqcup U)$, we construct a lasagna filling $E_w$ with input ball $B'$, surface 
$$D \cup \bigcup_{i, j} C^{i, -}_j \cup \bigcup_{i, j} C^{i, +}_j $$
and label $w$. Thus, $E_w$ is obtained from the filling $F_{\KhRN(D)(w)}$ by adjoining $D$. It follows that the fillings $E_w$ and $F_{\KhRN(D)(w)}$ are equivalent.

On the other hand, as in Section~\ref{sec:cabled}, we also have a cobordism $Z_i \subset S^3 \times [0,1]$ from $K(r-\alpha^-,r+\alpha^+) \sqcup U$ to $K(r-\alpha^-+e_i, r+\alpha^++e_i)$. For every $w \in \KhRN(K(r-\alpha^-,r+\alpha^+) \sqcup U)$, we construct a new lasagna filling $E'_w$ from $F_{\KhRN(Z_i)(w)}$ by adjoining $Z_i$ in the region between $B'$ and $B$. Then, the fillings $E'_w$ and $F_{\KhRN(Z_i)(w)}$ are equivalent.

\begin {figure}
\begin {center}
\input{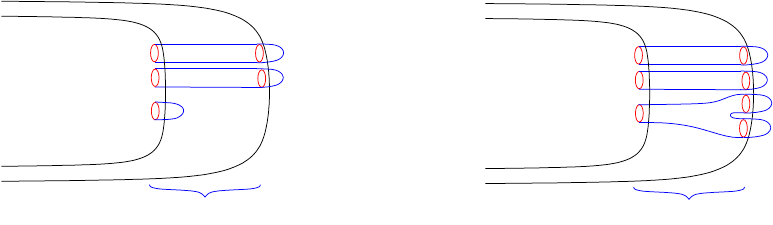_t}
\caption {A schematic picture of the fillings $E_w$ (left) and $E'_w$ (right). For simplicity, we drew the components of the cable side by side, rather than nested.}
\label{fig:Ew}
\end {center}
\end {figure}

Note that $E'_w$ has the same input data as $E_w$ (namely, the ball $B'$ and the label $w$); see Figure~\ref{fig:Ew}. Moreover, the surface of $E'_w$ is obtained from that of $E_w$ by taking connected sum with the  closed surface
\begin{equation}
\label{eq:sphere}
C_{r_i-\alpha_i^-+e_i}^{i,-}\cup R_i \cup C_{r_i+\alpha_i^++e_i}^{i,+} \; \cong \; S^2,
\end{equation}
where $R_i$ is the ribbon between two oppositely oriented copies of $K_i$, as in Section~\ref{sec:cabled}. The copy of $S^2$ from \eqref{eq:sphere} can be isotoped to lie entirely in the $2$-handle. Since this sphere bounds a 3-ball, taking a connected sum with it can be viewed simply as a surface isotopy. Therefore, $E_w$ and $E'_w$ are equivalent lasagna fillings. We conclude that
$$ \widetilde{\Phi}(\KhRN(D)(w)) = [F_{\KhRN(D)(w)}] = [E_w] = [E'_w] = [F_{\KhRN(Z_i)(w)}] = \widetilde{\Phi}(\KhRN(Z_i)(w)),$$
for every $w \in \KhRN(K(r-\alpha^-,r+\alpha^+) \sqcup U)\cong \KhRN(K(r-\alpha^-,r+\alpha^+)) \otimes \A.$ 

Let $v \in \KhRN(K(r-\alpha^-,r+\alpha^+))$. If we take $w=v \otimes X^m$, in view of Equations~\eqref{eq:phipsi} and \eqref{eq:KhD}, we have
$$0=\widetilde{\Phi}(\psi^{[m]}_i(v)) \text{ for } m <  N-1; \ \ \ \ \widetilde{\Phi}(v) = \widetilde{\Phi}(\psi^{[N-1]}_i(v)).$$

We have now verified our claim that $\widetilde{\Phi}$ takes equivalent elements to the same equivalence class of lasagna fillings. This shows that $\widetilde{\Phi}$ descends to a map
$$\Phi: \cKhRNa(K) \to \Sz(W;\emptyset,\alpha).$$

Next, we define an inverse $\Phi^{-1}$ to $\Phi$. Let $F$ be a lasagna filling with surface $\Sigma$. By an isotopy, we can push the input balls of $F$ into the interior of the $0$-handle and we can arrange that $\Sigma$ intersects the cocores $G_i$ of the $2$-handles transversely in a number of points. Since we assume that the relative homology class of $\Sigma$ is $\alpha$, the signed intersection of $\Sigma$ with $G_i$ must be $\alpha_i$. Thus, $\Sigma$ intersects $G_i$ in $|\alpha_i|+2r_i$ points. After another isotopy, we can assume that $\Sigma$ intersects the $2$-handles only in core-parallel disks, one for each intersection with the cocores. That is, all interesting topology of $\Sigma$ is pushed into the $0$-handle. Next, choose a ball $B'$ slightly smaller than the $0$-handle, so that it contains all input balls and all of the interesting topology of $\Sigma$. Finally, evaluate the part of $F$ inside of $B'$. These modifications show that $F$ is equivalent to a filling of the form $\widetilde{\Phi}(v)$, and we let $$\Phi^{-1}([F]) := [v]$$
for $v\in \KhRN(K(r-\alpha^-,r+\alpha^+))$. Let us check that $\Phi^{-1}$ is well-defined. 

Firstly, if we change a lasagna filling $F$ by filling in an input ball in $B^4$, this does not change its its image under $\Phi^{-1}$, because the intersection with the cocores of the $2$-handles  is unchanged. 

Secondly, in the definition of $\Phi^{-1}$ we chose an isotopy of $\Sigma$ that makes it transverse to the cocores $G_i$ of the $2$-handles. If we made a different choice, the isotopy relating the two choices is a $1$-parameter family of surfaces $\Sigma_t, t \in [0,1]$. Let
$$ Y = \bigcup_{t \in [0,1]} \bigl( \{t\} \times \Sigma_t \bigr) \subset [0,1] \times W. $$
We can assume that $Y$ is a smooth $3$-dimensional submanifold with boundary, and that $Y$ intersects each $[0,1] \times G_i$ transversely in a $1$-manifold $P_i$. Let
$$ \pi_i: P_i \to [0,1]$$
be the composition of the inclusion into $[0,1]\times W$ with projection to the $[0,1]$ factor. After an isotopy of $Y$ rel boundary, we arrange so that $\pi_i$ are local diffeomorphisms away from finitely many critical points (caps and cups). The critical values $t_1 < \dots < t_n$ of $\pi_i$, together with the endpoints $t_0=0$ and $t_{n+1} =1$, split $[0,1]$ into finitely many intervals of the form $[t_k, t_{k+1}]$. After another isotopy of $Y$, we can assume that for some small $\epsilon > 0$, the collections of points $\pi_i^{-1}(t_k + \epsilon)$ and $\pi_i^{-1}(t_{k+1} - \epsilon)$ coincide, as oriented submanifolds of the cocore $G_i$. Thus, on the interval $[t_k + \epsilon, t_{k+1} -\epsilon]$, the surfaces $\Sigma_t$ stay transverse to the cocores, and the only effect of varying the lasagna filling is to replace the element $v$ with $\beta_i(b)(v)$ for some braid $b \in B_{r_i-\alpha_i^-, r_i+\alpha_i^+}.$ However, we have $\beta_i(b)(v) \sim v$, so the value of $\Phi^{-1}([F])$ is unchanged. 

At the critical values of $\pi_i$, the surfaces $\Sigma_t$ are no longer transverse to the cocores of the $2$-handles. Rather, what happens is that we introduce or remove two intersections of opposite signs. Introducing such points corresponds to ``pushing a disk'' from one lasagna filling into the cocore; that is, replacing a lasagna filling of the form $E_w$ with one of the form $E'_w$, for some $w \in \KhRN(K(r-\alpha^-,r+\alpha^+) \sqcup U)$. The corresponding values of $\Phi^{-1}$ for these fillings are $F_{\KhRN(D)(w)}$ and $F_{\KhRN(Z_i)(w)}$. When $w = v \otimes X^m$ for $m < N-1$, these give $0$ and $\psi^{[m]}_i(v)$, and when $w=v \otimes X^{N-1}$, they give $v$ and $\psi^{[N-1]}_i(v)$. Since the equivalence that gives the cabled Khovanov-Rozansky homology includes the relations 
$$\psi^{[m]}_i(v) \sim 0 \text{ for } m < N-1,\ \ \psi^{[N-1]}_i(v) \sim v,$$
we see that the value of $\Phi^{-1}([F])$ is the same for isotopic lasagna fillings. 

This completes the proof that $\Phi^{-1}$ is well-defined. It is straightforward to check that $\Phi$ and $\Phi^{-1}$, as defined above, are inverse to each other.
\end{proof}

\begin{remark}
Theorem~\ref{2handles} implies that $\cKhRNa(K)$ is invariant under handleslides among the components of the link $K$. One can also give a more direct proof of this fact, using just the functoriality of Khovanov-Rozansky homology, and without reference to the skein algebra. We leave this as an exercise for the reader.
\end{remark}

\section{The $0$-framed unknot}
\label{sec:D0}

The ring structure on $\Sz(S^2 \times D^2; \emptyset)$ is given by taking the union of lasagna fillings. More precisely, let $I\subset \partial D^2$ be an interval in the boundary of the disc. Then we have a gluing $$(S^2\times D^2) \cup_{S^2\times I}(S^2\times D^2)\cong S^2\times D^2.$$
This decomposition allows us to define a map $$m: \Sz(S^2 \times D^2; \emptyset)\otimes_{\Z} \Sz(S^2 \times D^2; \emptyset)\to \Sz(S^2 \times D^2; \emptyset)$$ by the formula $m(F_1\otimes F_2)=F_1\cup F_2$. The fact that isotopic lasagna fillings are equivalent shows that this map endows $\Sz(S^2 \times D^2; \emptyset)$ with the structure of an associative, commutative algebra. It has unit given by the empty filling. Before proving Theorem~\ref{thm:S2D2}, we identify the braid group action on the Khovanov-Rozansky homology of the unlink.
\begin{lemma}\label{braidgroupunlink}
The braid group action on the Khovanov-Rozansky homology of the unlink factors through the symmetric group.
\end{lemma}
\begin{proof}
Let $\sigma_i\in B_n$ be a generator of the braid group $B_n$ and $\Sigma_{\sigma_i}=\sigma_i\times S^1$ the associated cobordism from the $n$-component unlink $U(n)$ to itself. By definition, the action of $\sigma_i$ on $\KhR_N(U(n))$ is given by $$\beta(\sigma_i)=\KhR_N(\Sigma_{\sigma_i}).$$ To show that $\beta$ factors through the symmetric group, it suffices to check that $\beta(\sigma_i^2)$ is the identity. Note that there is an action of $\A^{\otimes n}$ on $\KhR_N(U(n))$ induced by the identity cobordism on $U(n)$ decorated with dots on each component, as in \cite[Example 2.1]{MWW}. The cobordism map $\beta(\sigma_i^2)$ is an isomorphism of $\A^{\otimes n}$-modules and $\KhR_N(U(n))$ is a rank one $\A^{\otimes n}$-module, so to compute $\beta(\sigma_i^2)$ it suffices to determine the image of $1\in\KhR_N(U(n))$. But $1$ must be sent to either $\pm 1$, so $\beta(\sigma_i^2)=\pm \operatorname{id}$. To determine the sign, since $\KhR_N(U(n))$ has no $2$-torsion, we just need to evaluate $\beta(\sigma_i^2)$ on any non-zero element.

Let $c_i$ be the $(n-2,n)$-tangle with a cup between the $i^{\text{th}}$ and $(i+1)^{\text{st}}$ endpoints. Then, $\sigma_i^2\circ c_i$ is isotopic to $c_i$. Let $C_i=c_i\times S^1$ be the associated cobordism from $U(n-2)$ to $U(n)$. Then $\KhR_N(C_i)(1)=1\otimes X_i + X_{i+1}\otimes 1$, where $X_i$ is the generator of the $i^{\text{th}}$ tensor factor of $\A^{\otimes n}$. In particular, $\KhR_N(C_i)$ is not the zero map.  Since $\sigma_i^2\circ c_i \sim c_i$, we have $\Sigma_{\sigma_i^2}\circ C_i\sim C_i$, so $\beta(\sigma_i^2)$ is the identity on the image of $\KhR_N(C_i)$. By the previous discussion, it must be the identity map.
\end{proof}

\begin{proof}[Proof of Theorem~\ref{thm:S2D2}]
Since $S^2 \times D^2$ is the result of attaching a $2$-handle along a $0$-framed unknot, we can apply Theorem~\ref{2handles} to obtain a group isomorphism $$\Sz(S^2 \times D^2; \emptyset)\cong \bigoplus\limits_{\alpha\in Z} \cKhRNa(U, 0), $$
and from Definition~\ref{def:cabled} we have
\[
\cKhRNa(U, 0) =  \Bigl( \bigoplus\limits_{r\in\N} \KhRN(U(r-\alpha^-,r+\alpha^+))\{(1-N)(2r+|\alpha|) \} \Bigr)/ \sim
\]
where $U(r-\alpha^-,r+\alpha^+)$ is the $(2r+|\alpha|)$-component unlink. The Khovanov homology of the $n$-component unlink is $\A^{\otimes n}$, where $\A=(\Z[X]/\langle X^N \rangle)\{1-N\}$. Letting $\mathcal{V}=\A\{1-N\}$ in order to absorb the shifts in the definition of $\cKhRNa$, we have
\[
\cKhR_N(U, 0) =  \bigoplus\limits_{\alpha\in\Z} \Bigl( \bigoplus\limits_{r\in\N} \mathcal{V}^{\otimes (r-\alpha^-)}\otimes \mathcal{V}^{\otimes (r+\alpha^+)} \Bigr)/ \sim
\]
The proof of the isomorphism in Theorem ~\ref{2handles} shows that the disjoint union of lasagna fillings  corresponds to the tensor product of elements in $\cKhR_N(U, 0)$. That is, the algebra structure on $\cKhR_N(U, 0)$ is defined as follows: for $v_i\in \mathcal{V}^{\otimes (r_i-\alpha_i^-)}$ and $w_i\in \mathcal{V}^{\otimes (r_i+\alpha_i^+)}$, $i=1,2$, we have $$(v_1\otimes w_1)\cdot (v_2\otimes w_2)=(v_1\otimes v_2)\otimes (w_1\otimes w_2)\in \mathcal{V}^{\otimes (r_1+r_2-\alpha_1^--\alpha_2^-)}\otimes \mathcal{V}^{\otimes (r_1+r_2+\alpha_1^++\alpha_2^+)}.$$

The group $\cKhRNa(U, 0)$ is supported in homological degree $0$ because $\mathcal{V}$ is in homological degree $0$. By Lemma~\ref{braidgroupunlink}, the braid group action factors through the symmetric group. This action by $S_{r-\alpha^-}\times S_{r+\alpha^+}$ simply permutes the tensor factors of $\mathcal{V}^{\otimes (r-\alpha^-)}\otimes \mathcal{V}^{\otimes (r+\alpha^+)}$. Reducing modulo this action, we are left with 
\[
\cKhRNa(U, 0) =  \Bigl( \bigoplus\limits_{r\in\N} \operatorname{Sym}^{r-\alpha^-}(\mathcal{V})\otimes \operatorname{Sym}^{r+\alpha^+}(\mathcal{V}) \Bigr)/ \sim,
\]
where $\operatorname{Sym}^r(\mathcal{V})$ is the $r^{th}$ symmetric power of $\mathcal{V}$. We write basis elements of $\mathcal{V}$ with indices lowered to avoid confusion between the algebra structure on $\operatorname{Sym}(\mathcal{V})$ and the algebra structure on $\A$ (the latter plays no role in this discussion). For convenience, we also re-index the basis and write $x_k\in \mathcal{V}$ for $X^{N-1-k}\in\A$. Note that $x_k$ has quantum degree $-2k$.

We now consider the maps $\psi^{[m]}$ in the equivalence relation $\sim$. For the 0-framed unknot, the maps $\psi^{[m]}$ are given by multiplication by $\Delta(X^m)$, which is defined in Equation~\eqref{eq:comult}. In our preferred basis for $\mathcal{V}$, we have $$\Delta(x_m)=\sum\limits_{k=0}^m x_k\otimes x_{m-k}.$$
We can use the multiplicative structure on $\cKhR_N(U,0)$ to write $\psi^{[N-1-m]}(v)=\Delta(x_m)\cdot v$. So, if we define $\mathcal{I}$ to be the ideal of $\cKhR_N(U, 0)$ generated by $\Delta(x_0)-1$ and the $\Delta(x_m)$ for $0< m\leq N-1$, then we have a ring isomorphism 
$$\cKhR_N(U, 0)\cong \bigoplus\limits_{\alpha\in\Z} \Bigl( \bigoplus\limits_{r\in\N} \operatorname{Sym}^{r-\alpha^-}(\mathcal{V})\otimes \operatorname{Sym}^{r+\alpha^+}(\mathcal{V}) \Bigr)/ \mathcal{I}\  \cong \bigl( \Sym^*(\mathcal{V}) \otimes  \Sym^*(\mathcal{V}) \bigr) / \mathcal{I}.$$
To conclude, we must show that the above ring is isomorphic to $\Z[A_1,\dots, A_{N-1},A_0,A_0^{-1}]$. Observe that $\Sym^*(\mathcal{V}) \otimes  \Sym^*(\mathcal{V})$ is freely generated as a commutative algebra by the elements
\begin{align*}
& A_i=1\otimes x_i, \text{ and }B_i= x_i\otimes 1 \text{ for } 0\leq i\leq N-1.
\end{align*}
In terms of these ring generators, the generators of the ideal $\mathcal{I}$ are
\begin{align*}
&\Delta(x_0)-1 = A_0B_0-1 \text{ and }\\
&\Delta(x_m)=\sum\limits_{k=0}^m A_kB_{m-k} \text{ for } 0< m\leq N-1.
\end{align*}
Therefore, $$\cKhR_N(U,0)\cong \Z[A_0,\dots, A_{N-1},B_0,\dots,B_{N-1}]/(A_0B_0-1, \sum\limits_{k=0}^m A_kB_{m-k}) \text{ for } 0< m\leq N-1.$$
The relation $A_0B_0-1$ lets us write $B_0=A_0^{-1}$. Then, since $A_0$ is invertible, we can replace the generator $\sum\limits_{k=0}^m A_kB_{m-k}$ by $$A_0^{-1}(\sum\limits_{k=0}^m A_kB_{m-k})=B_m+A_0^{-1}(\sum\limits_{k=1}^m A_kB_{m-k}).$$
This relation allows us to write the generator $B_m$ in terms of the $A_i$ and the $B_j$ with $0<j<m$. That is, the reducing modulo $\Delta(x_{N-1})$ is equivalent to getting rid of the generator $B_{N-1}$. Then, reducing modulo $\Delta(x_{N-2})$ is equivalent to removing the generator $B_{N-2}$, and so on until we have used all of the $\Delta(x_m)$ relations and removed all of the $B_m$ generators. This leaves us with
$$\cKhR_N(U,0)\cong\Z[A_1,\dots, A_{N-1},A_0,A_0^{-1}]$$
as advertised.

Since the relatively homology class of the lasagna filling associated to some $p\otimes q\in \Sym^{d_1}(\mathcal{V}) \otimes  \Sym^{d_2}(\mathcal{V})$ is given by $d_2-d_1$, we see that the $A_i$ have homology class $\alpha=1$. Also, the homology class is additive under multiplication in the ring. So, the elements with homology class $\alpha$ are precisely the homogeneous polynomials of degree $\alpha$.
\end{proof}

\section{The unknot with non-zero framing}
\label{sec:Dp}
The goal of this section is to prove Theorem~\ref{thm:Dp}.  This will be a direct consequence of Theorem~\ref{2handles} and the following:

\begin{proposition}\label{unknotp}
Let $(U,p)$ be the unknot with framing $p$. 

$(a)$ For $N=2$ and $p>0$, the cabled Khovanov-Rozansky homology of $(U,p)$ in homological degree $0$ and at level $0\in \Z$ is given by
$$\cKhR_{2,0}^{0, j}(U,p) = 0, \ \ \forall j\in \Z.$$

$(b)$ On the other hand, for $N=2$ and $p < 0$ we have
$$\cKhR_{2, 0}^{0, j}(U,p) = \begin{cases}
\Z &\text{if } j=0\\
0 & \text{otherwise.}
\end{cases}$$
\end{proposition}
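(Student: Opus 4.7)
My plan is to apply Definition~\ref{def:cabled} directly for $N = 2$ and $\alpha = 0$. Since $\alpha^{\pm} = 0$ and $|\alpha| = 0$, we have
$$\cKhR_{2, 0}(U, p) = \Bigl( \bigoplus_{r \in \N} \KhR_2(U_p(r, r))\{-2r\} \Bigr) / \sim,$$
where $U_p(r, r)$ is the cable of the $p$-framed unknot consisting of $r$ positively and $r$ negatively oriented parallel strands; as an unoriented link it is a $2r$-strand torus link with $p$ units of twist. The relations are generated by the braid action and by $\psi^{[0]}(v) \sim 0$, $\psi^{[1]}(v) \sim v$. The identification $\psi^{[1]}(v) \sim v$ lets me view the computation as a colimit along the maps $\psi^{[1]}$, quotiented by the image of $\psi^{[0]}$. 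Throughout I would use~\eqref{eq:KhR2} to convert to ordinary Khovanov homology $\Kh$ of the mirror torus links, where concrete computations are available.

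For part (b), $p < 0$: the mirror links are positive torus cables, whose $\Kh$ is concentrated in non-negative homological degrees. After applying \eqref{eq:KhR2} and the shift $\{-2r\}$, the homological-degree-$0$ piece of $\KhR_2(U_p(r, r))\{-2r\}$ is controlled and should collapse to a single class in bidegree $(0, 0)$ as $r$ varies. I would identify this generator with the image of the oriented resolution of $U_p(r, r)$, verify that $\psi^{[1]}$ carries it between consecutive levels, and check that $\psi^{[0]}$ lands in a strictly higher quantum degree (shifted by $2$ relative to $\psi^{[1]}$, by the definitions in Section~\ref{sec:cabled}), so it does not interact with the $(0, 0)$ piece. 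This yields $\cKhR_{2, 0}^{0, 0}(U, p) \cong \Z$ and zero in all other bidegrees.

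For part (a), $p > 0$: now the cables are positive torus links, and the argument changes in character. The class obtained via $\psi^{[1]}$ from the level-$0$ generator, living in bidegree $(0, 0)$ at each level, should coincide (modulo the braid action) with an element of the form $\psi^{[0]}(u)$ at the same level, and hence be equivalent to $0$. The key step is to produce this identification $\psi^{[1]}(v) \equiv \psi^{[0]}(u) \pmod{\text{braid}}$ using a $+p$-framed sphere cobordism in $S^3 \times [0, 1]$: attaching a suitable closed annular/spherical cobordism to the saddle modifies $\psi^{[1]}$ into $\psi^{[0]}$. Starting from the generator $1 \in \KhR_2(\emptyset)$ and iterating, every class in homological degree $0$ reduces to $0$.

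The main obstacle is step (a): rigorously producing the identification $\psi^{[1]}(v) \equiv \psi^{[0]}(u) \pmod{\text{braid}}$ when $p > 0$, which requires a concrete cobordism realization and control over the resulting map in Khovanov homology. The asymmetry between parts (a) and (b) should be traceable to the existence of a suitable embedded $(+p)$-sphere for $p > 0$ but not for $p < 0$ in the relevant cobordism region. If the direct identification proves elusive, an alternative is to combine~\eqref{eq:KhR2} with published stability results for $\Kh$ of torus links (Sto\v{s}i\'c, Rozansky, Willis) and compute the colimit explicitly from the known formulas in the homological-degree-$0$ part.
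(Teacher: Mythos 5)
Your setup (reduce to computing $\KhR_2^{0,*}$ of the $2r$-strand torus cables of the $p$-framed unknot, then track the maps $\psi$ and $\phi$ and quotient) matches the paper's general strategy, and your fall-back suggestion of using the Sto\v{s}i\'{c}--Rozansky--Willis identifications with arc-algebra Hochschild (co)homology is in fact exactly what the paper does. However, as written, the proposal leaves the genuinely hard parts unaddressed, and in part (b) you misjudge where the difficulty lies.

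\textbf{Part (a).} The key step cannot be a ``\,$+p$-framed sphere cobordism making $\psi^{[1]}(v) \equiv \psi^{[0]}(u)$ up to braids.'' No such identity holds at the level of cobordism maps; the collapse happens for a purely algebraic reason. The paper identifies $\KhR_2^{0,j}(T'_{2n,2np}) \cong Z(H^n)_{2n+j}$ (Rozansky's Hochschild homology, plus Khovanov's computation of $Z(H^n)$) and then computes (Proposition~\ref{prop:plus}) that under this identification $\psi$ is multiplication by $\pm(X_{2n+2}-X_{2n+1})$ and $\phi$ is multiplication by $\mp X_{2n+1}X_{2n+2}$. Modding out by $\psi(v)\sim 0$ forces $X_{2n+1}=X_{2n+2}$ in the quotient, and then $\phi$ becomes multiplication by $X_{2n+1}^2=0$. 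Since we also impose $\phi(v)\sim v$, everything collapses. This algebra identity $X_i^2 = 0$ in $Z(H^n)$ is the actual mechanism, and it is not recoverable from your sketch.

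\textbf{Part (b).} You assert that the degree-$0$ piece ``should collapse to a single class in bidegree $(0,0)$,'' but before the quotient, the groups $\KhR_2^{0,j}(T'_{2n,2np})\{-2n\}\cong Z(H^n)^\vee_{-j}$ have rank $\binom{2n}{k}-\binom{2n}{k-1}$ in each quantum degree $j=-2k$, $k=0,\dots,n$. There is a great deal of material in negative quantum degrees, and showing that it all dies under $\psi(v)\sim 0$ and the braid action is the most involved part of the paper's argument: one needs the generating set for $Z(H^n)^\vee_{2k}$ by balanced partial matchings (Proposition~\ref{prop:balancedmatch}), the observation that $\psi$ sends $f_{\mm}$ to $\pm f_{\mm\cup\{(2n+2,2n+1)\}}$, and the fact that the braid action (through $S_{n+1}\times S_{n+1}$) can move any nonempty balanced matching to one containing $(2n+2,2n+1)$. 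Your proposal treats this as essentially automatic, which it is not. In contrast, the $j=0$ part of your claim is fine: $\phi$ is an isomorphism there, and there are no incoming $\psi$ relations, so the colimit is $\Z$.

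In short: the reduction to torus links and the idea of computing a colimit is the right framework, but both the collapse mechanism in (a) and the vanishing in negative quantum degrees in (b) require explicit identification of $\psi$ and $\phi$ in terms of the center of $H^n$, which is where the real content of the proof lies.
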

 
 An outline of the proof of Proposition~\ref{unknotp} is as follows. The group $\cKhR_{2, 0}^{0, *}(U,p)$ is built from the Khovanov homologies of cables on the $p$-framed unknot. These cables are the $T_{2n,2np}$ torus links. The Khovanov homology of these links in homological degree $0$ has been computed and, when $p<0$, shown to be isomorphic to the center of Khovanov's arc algebra $H^n$ \cite{stosic}. For $p>0$, it is isomorphic to the dual of the center of the arc algebra. To explain these results, we introduce the arc algebra in Section \ref{sec:Hn} and describe explicit presentations of its center and the dual of its center in Sections \ref{sec:ZHn} and \ref{sec:dual}. In Section \ref{sec:hochschild}, we explain the connection between the center of the arc algebra and the Khovanov homology of torus links. In order to compute the cabled Khovanov homology, we also need to identify the cobordism maps and braid group actions appearing in Definition \ref{def:cabled}. In Section \ref{sec:cobordismmaps}, we compute these maps under the identification of the Khovanov homology of the torus links with the center of the arc algebra. We combine these ingredients in Section \ref{sec:unknotpproof} to prove Proposition~\ref{unknotp}.
 
 \subsection{The ring $H^n$ and tangle invariants}
 \label{sec:Hn}
In \cite{KhTangles}, Khovanov extended his theory $\Kh$ to tangles. We briefly sketch his construction here.

Throughout this section $N=2$, and the invariant of the unknot is
$$\A = (\Z[X]/\langle X^2 \rangle)\{1\} =\text{Span}\{1, X\}.$$
Let $\Cr_{n}$ be the set of crossingless matchings between $2n$ points on a line. For example, the following is an element of $\Cr_3$:
\[ \begin{picture}(0,0)%
\includegraphics{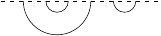}%
\end{picture}%
\setlength{\unitlength}{2368sp}%
\begingroup\makeatletter\ifx\SetFigFont\undefined%
\gdef\SetFigFont#1#2#3#4#5{%
  \reset@font\fontsize{#1}{#2pt}%
  \fontfamily{#3}\fontseries{#4}\fontshape{#5}%
  \selectfont}%
\fi\endgroup%
\begin{picture}(2124,471)(889,-1420)
\end{picture}%

\]
For $a \in \Cr_{n}$, we let $\bar a$ denote its reflection in the dashed line. Then, for every $a,b \in \Cr_n$, the composition $\bar b a$ is a collection of $k$ circles in the plane:
\[ \input{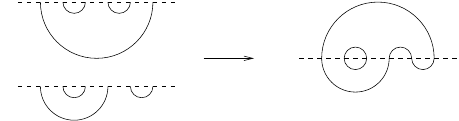_t}
\]

To $\bar b a$ we associate the tensor product of copies of $\A$, one for each circle:
$$ \F(\bar b a) = \A^{\otimes k}.$$

Khovanov constructs a finite-dimensional graded ring
$$ H^n := \bigoplus_{a, b \in \Cr_n} \Hn{a}{b} $$
where
$$ \Hn{a}{b}= \F(\bar b a)  \{n\} .$$
The multiplication on $H^n$ is
$$ \Hn{a}{b} \otimes \Hn{d}{c} \to 0 \text{ if } b \neq d$$
and
$$\Hn{a}{b} \otimes \Hn{b}{c} \to \Hn{a}{c}$$
is given by a sequence of saddle maps taking $\bar b b$ into the identity tangle, using the multiplication and comultiplication maps on the Frobenius algebra $\A$:
\[ \input{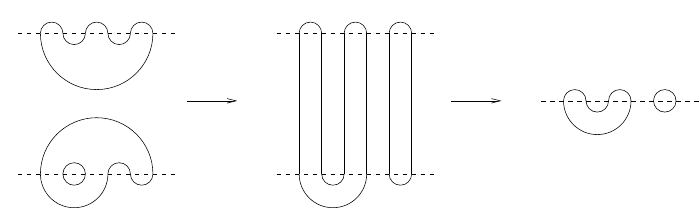_t}
\]

Consider now a tangle $T$ represented by a diagram inside a rectangle, connecting $2n$ points at the bottom to $2m$ points at the top. This is called an $(m, n)$-tangle in the terminology of \cite{KhTangles}. To $T$ Khovanov associates a complex of $(H^n, H^m)$-bimodules 
\begin{equation}
\label{eq:FT}
 \F(T)  = \bigoplus_{a \in \Cr_n} \bigoplus_{b \in \Cr_m} {}_{a}(\F(T))_{b}
 \end{equation}
where ${}_{a}(\F(T))_{b}$ is the usual Khovanov complex associated to the link obtained from the composition $\bar b T a$. Different diagrams for the same tangle produce homotopy equivalent complexes.

Finally, let us discuss a module action on the rings $H^n$. This is the analogue of the action of 
$$R = \Z[X]/\langle X^2 \rangle = \A\{-1\}$$ on the Khovanov homology of a link $L$, which was constructed in  \cite{KhovanovPatterns}. The action of $r \in R$ is given by introducing a small unknot near a basepoint on the knot, marking it with $r$, and applying the multiplication map induced by the saddle cobordism from $L \sqcup U$ to $L$:
\[ \input{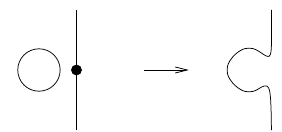_t}
\]
More generally, if $L$ has $m$ components, there is an action of $R^{\otimes m}$ by using unknots near basepoints on each component; cf. \cite[Section 2]{HeddenNi} and \cite[Section 2.2]{BLS}.  

In our case, we view the ring $H^n$ as an algebra over 
\begin{equation}
\label{eq:R2n}
 R^{\otimes 2n} = \Z[X_1, \dots, X_{2n}]/\langle X_1^2, \dots, X_{2n}^2 \rangle
\end{equation}
by using unknots near each of the $2n$ points on the line. We set $X_i$ to be $(-1)^i$ times the $X$ action from the unknot near the $i$th point.

If $T$ is an $(m, n)$-tangle, then the complex of bimodules $\F(T)$ gets induced actions of $R^{\otimes 2n}$ (from the points at the bottom) and of $R^{\otimes 2m}$ (from the points at the top).

\begin{remark}
The analogues of the $H^n$ rings for the $\gl_2$ theory $\KhR_2$ were constructed by Ehrig, Stroppel and Tubbenhauer in \cite{EST1}, \cite{EST2}. It would be more natural to work with them, because the skein lasagna algebras are defined in terms of $\KhR_2$. However, we chose to use the original $H^n$ in order to be able to use various results from the literature that were proved in that context.
\end{remark}

\subsection{The center of $H^n$}\label{sec:ZHn}
We will be interested in the center of the ring $H^n$, which was computed in \cite{KhSpringer}.
\begin{theorem}[Khovanov \cite{KhSpringer}]
\label{thm:ZHn}
The center $Z(H^n)$ is isomorphic to the polynomial ring $\Z[X_1, \dots, X_{2n}]$ modulo the ideal generated by the elements
$$ X_i^2, \ \ i=1, \dots, 2n$$
and
$$  \sum_{|I| =k} X_I, \ \ k=1, \dots, 2n,$$
where $X_I = X_{i_1}\dots X_{i_k}$ for $I=\{i_1, \dots, i_k\}$, and the sum is over all the cardinality $k$ subsets of $\{1, \dots, 2n\}$.
\end{theorem}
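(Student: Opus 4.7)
The plan is to define a ring homomorphism $\phi$ from the presented algebra to $Z(H^n)$ sending each generator $X_i$ to $(-1)^i$ times the $R^{\otimes 2n}$-action of Section~\ref{sec:Hn}, and then verify bijectivity. Each $\phi(X_i)$ is central because the operation of inserting a marked unknot near the $i$-th boundary point is supported in an arbitrarily small neighborhood of that point, and hence by planar isotopy can be slid past the compressing saddles used to define any multiplication $\Hn{a}{b}\otimes \Hn{b}{c}\to \Hn{a}{c}$; functoriality of the tangle invariant $\F$ then yields $X_i\cdot (uv)=u(X_iv)=(X_iu)v$.

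To check the defining relations, $X_i^2=0$ is immediate from $X^2=0$ in $\A$. For the elementary symmetric relations $\sum_{|I|=k}X_I=0$, I would evaluate on the standard basis of each $\Hn{a}{b}=\F(\bar b a)\{n\}$ consisting of tensors $\bigotimes_\gamma v_\gamma$ with $v_\gamma\in\{1,X\}$, one factor per circle $\gamma$ of $\bar b a$. The operator $X_i$ multiplies $v_{\gamma(i)}$ by $(-1)^i X$, where $\gamma(i)$ is the unique circle through boundary point $i$. When we expand $\sum_{|I|=k}X_I$, the relations $X^2=0$ force each surviving term to be supported on $k$ distinct circles; grouping by circles produces a product of factors $\sum_{i\in\gamma}(-1)^i$. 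The combinatorial key is that every arc in a crossingless matching joins points of opposite parity, so each circle of $\bar b a$ meets equally many odd- and even-indexed boundary points, making each factor vanish. Thus $\phi$ descends to the presented ring.

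For surjectivity, commutation with the idempotents $1_a$ forces any central $z$ to decompose as $\sum_a z_a$ with $z_a\in\Hn{a}{a}$. Each $\Hn{a}{a}$ is isomorphic to $\A^{\otimes n}$, with one tensor factor for each arc of $a$, and the restriction of $\phi(X_i)$ to $\Hn{a}{a}$ equals $\pm$ the $X$-generator of the circle through point $i$. Hence some polynomial $p_a$ satisfies $z_a=p_a(\phi(X_i))\cdot 1_a$. The cross-block commutation $z_a m = m z_b$ for $m\in\Hn{a}{b}$, combined with the fact that the left and right actions of $\phi(X_i)$ on $m$ agree (both multiply the circle of $\bar b a$ through point $i$ by $(-1)^i X$), forces all $p_a$ to be represented by a single polynomial $p$, whence $z=\phi(p)$.

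The principal obstacle is injectivity, which I would handle by a rank match. The presented ring coincides with the Tanisaki presentation of the cohomology of the two-row Springer fiber $\mathcal{B}_{(n,n)}$, whose $\Z$-rank is a known combinatorial quantity. To obtain the matching rank for $Z(H^n)$, I would exhibit an explicit $\Z$-basis of central elements indexed by the same combinatorial data, or reduce to a $\Q$-dimensional calculation via the block decomposition of $H^n\otimes\Q$. Equality of ranks, together with the surjectivity above, forces $\phi$ to be an isomorphism of free $\Z$-modules.
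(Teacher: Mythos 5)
The paper cites this theorem from Khovanov \cite{KhSpringer} rather than proving it, so there is no internal proof to compare against; your sketch, however, broadly follows the route Khovanov himself takes: build the ring map from the Tanisaki presentation via the $R^{\otimes 2n}$ basepoint action, verify the relations combinatorially, show surjectivity onto the center, and close with a rank comparison. The key combinatorial step in your relation check is correct and is the real content: since each arc of a crossingless matching joins points of opposite parity, every circle of $\bar{b}a$ contains equal numbers of odd and even boundary points, and the grouped sum $\prod_\gamma \sum_{i \in \gamma}(-1)^i$ vanishes. (A minor wording note: the paper's variables $X_i \in R^{\otimes 2n}$ already include the sign $(-1)^i$, so "$(-1)^i$ times the $R^{\otimes 2n}$-action" reads as a double sign; from your second paragraph it is clear you intend the operator to multiply by $(-1)^i X$ in total.)

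The two places where the sketch has genuine gaps are exactly the two you flag. For surjectivity, once you write $z = \sum_a z_a$ with $z_a = p_a\bigl(\phi(X_1)|_a, \dots, \phi(X_{2n})|_a\bigr)\cdot 1_a$, the representing polynomial $p_a$ is far from unique because the map $\Z[X_1,\dots,X_{2n}]/(X_i^2)\to\Hn{a}{a}$ collapses $2n$ variables onto $n$ circles in an $a$-dependent way; the assertion that cross-block commutation "forces all $p_a$ to be represented by a single polynomial" is therefore not a formal consequence of $z_a m = m z_b$. This is where Khovanov does real work, constructing the lift inductively using a partial order on crossingless matchings. For injectivity, the rank-matching plan is the right one — the admissible-monomial basis recorded in Proposition~\ref{prop:adm} (Lemma 8 of \cite{KhSpringer}) is exactly the combinatorial data Khovanov produces — but as stated it is only a plan: you still need freeness and the $\Z$-rank of the Tanisaki ring, plus an independent determination of $\rk Z(H^n)$, taking care that the latter (e.g., via Sto\v{s}i\'{c} and \eqref{eq:choose}) does not tacitly route through the theorem you are proving.
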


We grade $Z(H^n)$ so that each $X_i$ is in degree $2$. This corresponds to the convention for $\KhR_2$, and is opposite the convention for $\Kh$; see Section~\ref{sec:framings}.

One can read off from \cite{KhSpringer} an explicit description of the elements $X_i$. Let $p_1, \dots, p_{2n}$ be the $2n$ points on the line (in this order) that we connect by crossingless matchings. Then, we have
$$X_i = \sum_{a \in \Cr_n} {}_a (X_i)_a, \ \ \ {}_a (X_i)_a\in \Hn{a}{a} \cong \A^{\otimes k}\{n\}$$
where ${}_a (X_i)_a$ is the tensor product of $1 \in \A$ for each circle not going through $p_i$, and of $(-1)^i X \in \A$ for the circle going through $p_i$. In other words, $X_i$ exactly correspond to the variables in the ring $R^{\otimes 2n}$ from \eqref{eq:R2n}, applied to the identity element $1 \in Z(H^n) \subset H^n$. Thus, we can improve Theorem~\ref{thm:ZHn} to a statement about $R^{\otimes 2n}$-algebras:
\begin{proposition}
\label{prop:ZHn}
As a $R^{\otimes 2n}$-algebra, the center $Z(H^n)$ is isomorphic to
$$ R^{\otimes 2n}/ \bigl \langle \sum_{|I| =k} X_I, \ \ k=1, \dots, 2n\bigr \rangle$$
where the sum is over all the cardinality $k$ subsets of $\{1, \dots, 2n\}$.
\end{proposition}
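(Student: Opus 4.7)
The plan is to reduce the proposition to Theorem~\ref{thm:ZHn} by checking that the ring isomorphism there is compatible with the $R^{\otimes 2n}$-algebra structures on both sides. The underlying ring structure of $Z(H^n)$ is already identified in \cite{KhSpringer}, and the quotient on the right-hand side has an obvious $R^{\otimes 2n}$-module structure (the generators $X_i$ of $R^{\otimes 2n}$ map to the classes of the same names in the quotient). What has to be verified is therefore a single computation: that the element denoted $X_i$ in the presentation of $Z(H^n)$ equals the action of $X_i \in R^{\otimes 2n}$ on $1 \in Z(H^n)$.

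To carry this out, I would first expand the unit as $1 = \sum_{a \in \Cr_n} 1_a$, where $1_a \in \Hn{a}{a} \cong \A^{\otimes k_a}\{n\}$ is the tensor product of $1 \in \A$ over the circles of $\bar a a$. By the definition recalled in Section~\ref{sec:Hn}, the action of the $i$th generator of $R^{\otimes 2n}$ inserts a small unknot labeled by $X$ near the point $p_i$, saddles it into the diagram, and is then multiplied by the sign $(-1)^i$. On each summand $1_a$ this saddle is the Frobenius multiplication $\A \otimes \A \to \A$ applied to the circle through $p_i$, which sends $X \otimes 1 \mapsto X$ and leaves the other tensor factors alone. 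Consequently $X_i \cdot 1_a$ is the element with $(-1)^i X$ on the circle through $p_i$ and $1$ on every other circle, which is exactly Khovanov's description of ${}_a(X_i)_a$.

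Summing over $a \in \Cr_n$ gives $X_i \cdot 1 = X_i$ in $Z(H^n)$, matching the presentation. Since $Z(H^n)$ is commutative and the $R^{\otimes 2n}$-action commutes with multiplication by central elements, the action of any $r \in R^{\otimes 2n}$ on any $z \in Z(H^n)$ factors as $r \cdot z = (r \cdot 1)\, z$. Therefore agreement on $1$ upgrades the ring isomorphism of Theorem~\ref{thm:ZHn} to an isomorphism of $R^{\otimes 2n}$-algebras, and the relations $\sum_{|I|=k} X_I = 0$ descending from $\Z[X_1,\dots,X_{2n}]$ pass to relations in $R^{\otimes 2n} = \Z[X_1,\dots,X_{2n}]/\langle X_i^2\rangle$ as stated.

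The only potential obstacle is bookkeeping of the signs: one must ensure the $(-1)^i$ built into the definition of the $R^{\otimes 2n}$-action on $H^n$ matches precisely the $(-1)^i$ that appears in Khovanov's formula for ${}_a(X_i)_a$. Both signs come from the same convention, so the identification is immediate once one is careful, and no input beyond Theorem~\ref{thm:ZHn} is required.
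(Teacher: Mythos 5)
Your proof is correct and follows essentially the same route as the paper: the paper also observes that Khovanov's central elements $X_i$ coincide with the action of the $i$th generator of $R^{\otimes 2n}$ on $1 \in Z(H^n)$, and invokes Theorem~\ref{thm:ZHn} to conclude. You simply spell out the Frobenius-multiplication computation and the sign comparison that the paper leaves implicit, and explicitly note the step $r\cdot z = (r\cdot 1)z$ that upgrades agreement on the unit to an algebra isomorphism.
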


\begin{lemma}
The degree $2k$ part of $Z(H^n)$, denoted $Z(H^n)_{2k}$, is generated (as an Abelian group) by the $X_I$ over all cardinality $k$ subsets $I\subseteq \{1, \dots, 2n\}$, subject to the linear relations:
\begin{equation}
\label{eq:ZHnj}
 \sum_{|I | = k, \ I \supseteq J} X_I = 0,
\end{equation}
for every subset $J \subseteq \{1, \dots, 2n\}$ of cardinality $|J| <k$.
\end{lemma}

\begin{proof}
Starting from the description in Theorem~\ref{thm:ZHn}, we see that $Z(H^n)_{2k}$ is generated by all monomials of degree $k$ modulo the homogeneous relations of polynomial degree $k$. Such relations are of the form
\begin{equation}\label{eq:rel1}
X_J\sum_{|I| =k-\ell} X_I
\end{equation}
for all $0\leq \ell < k$ and all $J$ with $|J|=\ell$. Using that $X_i^2=0$, we can rewrite \eqref{eq:rel1} as
\begin{equation}\label{eq:rel2}
X_J\sum_{\substack{|I| =k-\ell\\ I\cap J=\emptyset}} X_I
\end{equation}
Replacing $I$ by $I\cup J$ in \eqref{eq:rel2}, we get the desired conclusion. \end{proof}

Sto\v{s}i\'{c}  \cite[Proposition 1]{stosic} showed that
\begin{equation}
\label{eq:choose}
Z(H^n)_{j} \cong \begin{cases} \Z^{{2n \choose k} - {2n \choose k-1}}& \text{if } j=2k, \ k=0, \dots, n,\\
0 & \text{otherwise.}
\end{cases}
\end{equation}
The total rank of $Z(H^n)$ is ${2n \choose n}$. We can give a concrete basis for $Z(H^n)$ as follows.

\begin{definition}
A subset $I \subseteq \{1, 2, \dots, 2n\}$ is called {\em admissible} if 
\begin{equation}
\label{eq:admi}
 |I \cap \{1, 2, \dots, m\}| \leq \frac{m}{2},
 \end{equation}
for all $m=1, \dots, 2k$. We let 
$$A^n_k = \{I \subseteq \{1, 2, \dots, 2n\} \mid I \text{ is admissible, } |I| = k\}.$$ 
\end{definition}

\begin{proposition}[Lemma 8 in \cite{KhSpringer}]
\label{prop:adm}
A basis for $Z(H^n)_{2k}$ consists of the elements $X_I$ for $I \in A^n_k$.
\end{proposition}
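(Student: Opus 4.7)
My plan is to combine Sto\v{s}i\'{c}'s rank formula \eqref{eq:choose} with two further ingredients: a ballot-type count showing $|A^n_k| = \binom{2n}{k} - \binom{2n}{k-1}$, and a spanning argument proving that $\{X_I : I \in A^n_k\}$ generates $Z(H^n)_{2k}$. Since $Z(H^n)_{2k}$ is free of rank $\binom{2n}{k} - \binom{2n}{k-1}$, a spanning set of the matching cardinality is automatically a basis.

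The first step is to reformulate admissibility in a usable way. Writing $I = \{i_1 < \cdots < i_k\}$, the condition \eqref{eq:admi} fails at some $m \leq 2k$ iff some $i_\alpha < 2\alpha$ (take $m = i_\alpha$, which satisfies $\alpha > m/2$). Hence $I$ is admissible iff $i_\alpha \geq 2\alpha$ for all $\alpha = 1, \ldots, k$. Counting such $k$-subsets is a classical ballot problem: encode $I$ as a lattice path from $(0,0)$ to $(2n-k, k)$ that steps up at $i \in I$ and right otherwise. Admissibility is exactly the requirement that this path stay weakly below the diagonal $y = x$, and the reflection principle gives the count $\binom{2n}{k} - \binom{2n}{k-1}$.

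For the spanning argument, I would induct on a non-admissibility measure such as $\nu(I) = \sum_{\alpha=1}^k \max(0, 2\alpha - i_\alpha)$, so that $\nu(I) = 0$ exactly when $I$ is admissible. For $I$ non-admissible, let $j$ be minimal with $i_j < 2j$; then $i_j = 2j - 1$ and $i_{j-1} = 2j - 2$. I would then apply the relation \eqref{eq:ZHnj} for an appropriate subset $J \subsetneq I$ with $|J| < k$---for instance $J = \{i_1, \ldots, i_{j-1}\}$---to rewrite $X_I$ as a $\Z$-combination of $X_{I'}$ with $I' \supset J$. A case analysis on how $T' = I' \setminus J$ interacts with the indices $\leq 2j-1$, combined with auxiliary instances of \eqref{eq:ZHnj} to cancel terms where $\nu(I') \geq \nu(I)$, should show that $X_I$ lies in the $\Z$-span of $X_{I'}$ with strictly smaller $\nu$. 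Iterating terminates at $\nu = 0$, i.e., at admissibles.

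The main obstacle will be making this last step rigorous: a naive application of a single relation produces terms $X_{I'}$ that may be ``less admissible'' than $X_I$ (for example, by replacing $i_j$ with an index below $i_{j-1}$), so the reduction cannot be done by simply invoking one relation together with a lex order. The resolution, as in Khovanov's original argument in \cite{KhSpringer}, requires a more delicate combinatorial bookkeeping---combining several relations to ensure cancellation of the ``bad'' terms and a net decrease in $\nu(I)$.
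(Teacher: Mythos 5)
The paper itself offers no proof of this proposition --- it simply cites Lemma~8 of \cite{KhSpringer} --- so there is no argument in the text to compare yours against line by line. Your overall framework, however, is valid: since Sto\v{s}i\'{c}'s formula \eqref{eq:choose} states that $Z(H^n)_{2k}$ is free abelian of rank $\binom{2n}{k}-\binom{2n}{k-1}$, and a surjective endomorphism of a finitely generated $\Z$-module is injective, it really does suffice to prove that the $X_I$ with $I\in A^n_k$ span and that $|A^n_k|=\binom{2n}{k}-\binom{2n}{k-1}$. Your reformulation of admissibility as ``$i_\alpha\ge 2\alpha$ for all $\alpha$'' is correct, and the lattice-path encoding together with the reflection principle does give the claimed count; the cardinality step is solid.

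The genuine gap is the spanning step, and that step is the entire mathematical content of the proposition --- the rest is comparatively routine bookkeeping. Whichever subset $J\subsetneq I$ you use in \eqref{eq:ZHnj}, the resulting expression for $X_I$ involves terms $X_{I'}$ in which $I'\setminus J$ contains small indices, and such $I'$ can have $\nu(I')\ge\nu(I)$. These bad terms are unavoidable: for the minimal bad index $j\ge 2$ one has $i_j=2j-1$, so $\{i_1,\dots,i_j\}$ is a $j$-element subset of the $(2j-1)$-element set $\{1,\dots,2j-1\}$, and there is always some $\ell<i_j$ with $\ell\notin I$. You correctly observe that one must combine several instances of \eqref{eq:ZHnj} --- for example, subtracting the $J=\{1\}$ relation from the $J=\emptyset$ relation yields the derived relation $\sum_{|I|=k,\,I\not\ni 1}X_I=0$, which is useful for such cancellations --- but you then stop and defer to ``Khovanov's original argument.'' That deferral reduces the proof back to the very citation you set out to replace: the ``more delicate combinatorial bookkeeping'' you gesture at is precisely what needs to be supplied, and until it is, the proposal has the right shape but leaves the load-bearing step unproved.
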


\begin{example}
When $n=2$, the ranks of $Z(H^2)$ in degrees $0, 2$ and $4$ are $1$, $3$ and $2$, respectively. A  basis is given by $1$, $X_2$, $X_3$, $X_4$, $X_2X_4$ and $X_3X_4$.
\end{example}

\subsection{The dual of the center} 
\label{sec:dual}
For a free Abelian group $V$, we will denote by 
$V^\vee = \Hom(V, \Z)$ its dual. For example, $Z(H^n)^\vee_{2k}$ is the dual of $Z(H^n)_{2k}$. In view of \eqref{eq:choose}, this is a free Abelian group of rank ${2n \choose k} - {2n \choose k-1}$. We will describe a set of interesting elements in $Z(H^n)_{2k}$. 

Let $Z^n_k$ be the Abelian group freely generated by elements $X_I$ for $I \subseteq \{1, \dots, 2n\}$ with $|I|=k$. The dual $(Z^n_k)^\vee$ has a dual basis given by $X_I^\vee$, where $ X_I^\vee(X_J)$ is Kronecker's $\delta_{IJ}$. We introduce a multiplication on $\oplus_k (Z^n_k)^\vee$ by setting
$$ X_I^{\vee} \cdot X_J^{\vee} =  \begin{cases} X_{I \cup J}^\vee& \text{if } I \cap J =\emptyset,\\
0 & \text{otherwise.}
\end{cases}$$
We can define a contraction operation $(Z^n_{\ell})^\vee\times Z^n_k\to  Z^n_{k-\ell}$ by
$$ X_I^{\vee} (X_J) =  \begin{cases} X_{J\backslash I}& \text{if } I \subseteq J,\\
0 & \text{otherwise.}
\end{cases}$$
When $k=\ell$, this agrees with the action of the dual group. Furthermore, we have the associativity relation
\begin{equation}\label{eq:contraction} (X_I^{\vee} \cdot X_J^{\vee})(X_K)=X_I^{\vee} (X_J^{\vee}((X_K))\end{equation}
Recall that $Z(H^n)_{2k}$ is the quotient of $Z^n_k$ by the relations  \eqref{eq:ZHnj}. It follows that 
$Z(H^n)^\vee_{2k}$ is a subgroup of $(Z^n_k)^\vee$ consisting of those functions $f: Z^n_k\to \Z$ such that
\begin{equation}
\label{eq:ZHnjd}
f\bigl( \sum_{|I | = k, \ I \supseteq J} X_I \bigr) = 0,
\end{equation}
for every subset $J \subseteq \{1, \dots, 2n\}$ with $|J| < k$.

\begin{definition}
A {\em partial matching} of $\{1, 2, \dots, 2n\}$ is a set
$$ \mm = \{p_1,\dots, p_k\}$$
consisting of $k$ disjoint pairs of elements from $\{1, 2, \dots, 2n\}$, for some $k \leq m$. A pair $p = (i, j)$ is called {\em balanced} if it consists of an odd number and an even number, and a partial matching $\mm$ is called {\em balanced} if all the pairs $p_i$ in $\mm$ are balanced.
\end{definition}

\begin{example}
The following is a balanced partial matching of $\{1, \dots, 8\}$:
$$ \{ (2, 5), (3, 8), (6, 7) \}.$$
\end{example}

Given a partial matching $\mm=\{(i_1, j_1), \dots, (i_k, j_k)\}$, we define an element $f_{\mm} \in (Z^n_k)^\vee$ by
$$ f_{\mm} = \prod_{s=1}^k (X_{i_s}^\vee - X_{j_s}^\vee).$$

\begin{lemma}
\label{lem:match}
For every partial matching $\mm$, the element $f_{\mm}$ satisfies the relations \eqref{eq:ZHnjd}, and therefore can be viewed as an element of $Z(H^n)^\vee_{2k}$.
\end{lemma}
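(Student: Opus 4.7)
The plan is to expand $f_{\mm}$ explicitly in the dual basis $\{X_I^\vee\}$ of $(Z^n_k)^\vee$ and then evaluate the resulting linear functional directly on the relation generators in \eqref{eq:ZHnjd}. Using the multiplication rule $X_A^\vee \cdot X_B^\vee = X_{A \cup B}^\vee$ when $A \cap B = \emptyset$ (and $0$ otherwise), together with the fact that the pairs $(i_s, j_s)$ appearing in $\mm$ are pairwise disjoint, the product expands as
$$ f_{\mm} = \sum_{\epsilon \in \{0,1\}^k} (-1)^{|\epsilon|} X^\vee_{I_\epsilon}, \qquad I_\epsilon := \{i_s : \epsilon_s = 0\} \cup \{j_s : \epsilon_s = 1\}, $$
with $|I_\epsilon| = k$ for every $\epsilon$. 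In particular $f_{\mm}$ is homogeneous of degree $2k$, matching the target.

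Next, for a fixed $J \subseteq \{1, \dots, 2n\}$ with $|J| < k$, I compute
$$ f_{\mm}\Bigl(\sum_{|I|=k,\; I \supseteq J} X_I\Bigr) = \sum_{\epsilon \in \{0,1\}^k \,:\, I_\epsilon \supseteq J} (-1)^{|\epsilon|}. $$
Since each $I_\epsilon$ is contained in the matched set $\bigcup_s \{i_s, j_s\}$ and meets each pair of $\mm$ in exactly one element, the indexing set on the right is empty (so the sum is trivially $0$) unless (a) $J$ is contained in the matched set and (b) no pair of $\mm$ contains two elements of $J$. When these necessary conditions hold, each pair $p_s$ that meets $J$ uniquely determines the corresponding $\epsilon_s$, while every pair disjoint from $J$ leaves $\epsilon_s$ free.

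The key observation, which finishes the argument, is that at most $|J|$ of the $k$ pairs can meet $J$, and since $|J| < k$ there must be at least one "free" pair $p_{s_0}$ disjoint from $J$. Summing over the two values of $\epsilon_{s_0}$ while holding all other $\epsilon_s$'s fixed produces term-by-term cancellation, so the total sum is $0$. This shows $f_{\mm}$ annihilates every relation in \eqref{eq:ZHnjd}, and hence descends to an element of $Z(H^n)^\vee_{2k} \subseteq (Z^n_k)^\vee$. The whole proof is essentially bookkeeping of the sign structure; the only real step is identifying that $|J| < k$ forces the existence of a free pair that produces the pairwise cancellation, so no serious technical obstacle is expected.
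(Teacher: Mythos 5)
Your proof is correct and takes essentially the same route as the paper: expand the product $f_{\mm}$, observe that nonzero contributions to $f_{\mm}\bigl(\sum_{|I|=k,\,I\supseteq J} X_I\bigr)$ force $J$ to lie in the matched set with at most one element per pair, and then use $|J|<k$ to extract a free pair $p_{s_0}$ whose two choices of $\epsilon_{s_0}$ give a sign-canceling pairing of terms. The paper phrases the same cancellation as ``$2^{k-|J|}$ terms of $\pm 1$, half $+1$ and half $-1$,'' while your $\epsilon$-parametrization makes the bookkeeping a touch more explicit, but there is no substantive difference.
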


\begin{proof}
We show that the expression
\begin{equation}
\label{eq:expand}
\Bigl( \prod_{s=1}^k (X_{i_s}^\vee - X_{j_s}^\vee) \Bigr) \bigl( \sum_{|I | = k, \ I \supseteq J} X_I \bigr)
\end{equation}
vanishes. First, we observe that all terms in \eqref{eq:expand} are zero if $J$ contains elements that do not appear in $\mm$. Indeed, each term in the sum is of the form $$\pm \left(\prod_{s=1}^k X_{\nu_s}^\vee\right) X_I,$$
where the symbol $\nu$ is either $\nu=i$ or $\nu=j$. By the definition of the product structure on the dual group, this term is only non-zero if $I=\{\nu_1,\dots,\nu_k\}$. However, since $I\supseteq J$, we must have that $J$ is contained in the set of elements in $\mm$.

We now assume that all elements of $J$ appear in $\mm$. Since $|J| <k$, there is at least one pair in $\mm$, say $(i_1, j_1)$, that does not contain elements of $J$. We split up the sum \eqref{eq:expand} according to whether $I$ contains $i_1,j_1,$ or both. If $I$ contains neither $i_1$ nor $j_1$, then it contributes zero after applying the dual element $X_{i_1}^\vee-X_{j_1}^\vee$. Applying \eqref{eq:contraction} to contract by $X_{i_1}^\vee-X_{j_1}^\vee$, \eqref{eq:expand} becomes
\begin{align*}
\label{eq:expand2}
& \Bigl( \prod_{s=2}^k (X_{i_s}^\vee - X_{j_s}^\vee) \Bigr)(X_{i_1}^\vee-X_{j_1}^\vee)\bigl( \sum_{\substack{|I | = k\\ \ I \supseteq J\cup\{i_1\} \\ j_1\not\in I}} X_I + \sum_{\substack{|I | = k \\ \ I \supseteq J\cup\{j_1\}\\ i_1\not\in I}} X_I + \sum_{\substack{|I | = k \\ \ I \supseteq J\cup\{i_1,j_1\}}} X_I  \bigr) \\
= & \Bigl( \prod_{s=2}^k (X_{i_s}^\vee - X_{j_s}^\vee) \Bigr)\bigl( \sum_{\substack{|I | = k-1 \\ \ I \supseteq J \\ i_1,j_1\not\in I}} X_I - \sum_{\substack{|I | = k-1 \\ \ I \supseteq J\\ j_1,i_1\not\in I}} X_I + \sum_{\substack{|I | = k-1 \\ \ I \supseteq J \cup \{j_1\}\\ i_1\not\in I}} X_I  - \sum_{\substack{|I| = k-1 \\ \ I \supseteq J \cup \{i_1\}\\ j_1\not\in I}} X_I \bigr) \\
\end{align*}
The first two sums inside the last parentheses manifestly cancel. We can form a bijection between the terms in the third and fourth sums as follows: if $I$ is a subset contributing to the third term, then $I'=\{j_1\}\cup I\backslash\{i_1\} $ is a subset contributing to the fourth sum. Since neither $i_1$ nor $j_1$ appear among the $i_s,j_s$ for $s>1$, we see that $$\Bigl( \prod_{s=2}^k (X_{i_s}^\vee - X_{j_s}^\vee) \Bigr) X_I = \Bigl( \prod_{s=2}^k (X_{i_s}^\vee - X_{j_s}^\vee) \Bigr) X_{I'}$$
for such an $I$. Therefore, the terms in the third and fourth sums cancel as well.
\end{proof}

We now exhibit a set of generators for $Z(H^n)^\vee_{2k}$. (Note that it will usually not be a basis.)

\begin{proposition}
\label{prop:balancedmatch}
The elements $f_{\mm}$, over all balanced partial matchings $\mm$ of cardinality $k$, generate the group  $Z(H^n)^\vee_{2k}$.
\end{proposition}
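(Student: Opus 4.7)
The plan is to exhibit, for each admissible $I\in A^n_k$, a canonical balanced partial matching $\mm_I$ of cardinality $k$ such that the family $\{f_{\mm_I}\}_{I\in A^n_k}$ is in fact a basis of $Z(H^n)^\vee_{2k}$—a strengthening of the claim. This will be established by a triangularity argument against the basis $\{X_I\}_{I\in A^n_k}$ of $Z(H^n)_{2k}$ furnished by Proposition~\ref{prop:adm}, and Proposition~\ref{prop:balancedmatch} is then immediate.

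The construction of $\mm_I$ proceeds by labeling each of the $2n$ positions with an opening bracket at every $m\notin I$ and a closing bracket at every $m\in I$. The admissibility condition \eqref{eq:admi} is precisely the statement that at every prefix openings weakly dominate closings, so the $k$ closing brackets admit the standard nested bracket pairing. This produces $k$ disjoint pairs $(i_s,j_s)$ with $j_s\in I$, $i_s\notin I$, and $i_s<j_s$. The key observation is that each such pair is automatically balanced: the positions strictly between $i_s$ and $j_s$ are matched among themselves in the nested pairing, so $j_s-i_s-1$ is even and hence $i_s$ and $j_s$ have opposite parity. Thus $\mm_I$ is a balanced partial matching of cardinality $k$.

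For triangularity, I would order $A^n_k$ by reverse lexicographic order (comparing largest elements first). From the defining product formula, $f_{\mm_I}(X_{I'})=\pm 1$ exactly when $I'$ contains one element from each pair of $\mm_I$, and vanishes otherwise. Since every pair of $\mm_I$ consists of an $I$-element $j_s$ together with a strictly smaller non-$I$-element $i_s$, every compatible $I'$ is obtained from $I$ by swapping some $j_s$ for the corresponding $i_s$. A short counting argument then shows that the sorted $I'$ is componentwise $\leq$ sorted $I$, with equality only if $I=I'$, so $I'\leq I$ in reverse lex order. Therefore $f_{\mm_I}(X_I)=\pm 1$, while $f_{\mm_I}(X_{I'})=0$ for every admissible $I'$ strictly above $I$, and the matrix $\bigl(f_{\mm_I}(X_{I'})\bigr)_{I,I'\in A^n_k}$ is lower triangular with $\pm 1$ along the diagonal, hence invertible over $\Z$. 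The main obstacle is the combinatorial parity argument ensuring that the nested bracket pairing of $I$ yields a balanced matching; once this is in place, the triangularity follows automatically from the strict inequality $i_s<j_s$ built into each pair of $\mm_I$.
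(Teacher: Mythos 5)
Your proposal is correct, and it takes a genuinely different route from the paper. The paper's proof is a two-step argument: it first shows by induction on $n$ that the $f_{\mm}$ over \emph{all} (not necessarily balanced) partial matchings generate $Z(H^n)^\vee_{2k}$, and then, in a second pass, uses the relations $(X_a^\vee - X_b^\vee)(X_c^\vee - X_d^\vee) = (X_a^\vee - X_c^\vee)(X_b^\vee - X_d^\vee) + (X_a^\vee - X_d^\vee)(X_c^\vee - X_b^\vee)$ and $X_b^\vee - X_c^\vee = (X_b^\vee - X_a^\vee) + (X_a^\vee - X_c^\vee)$ to trade unbalanced pairs for balanced ones. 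Your argument instead exploits the admissibility condition \eqref{eq:admi} directly as a ballot-sequence/Dyck-path condition: assigning each $m\in I$ a closing bracket and each $m\notin I$ an opening bracket, the nested (stack-based) matching pairs each element $j_s$ of $I$ with an opening $i_s < j_s$, and the standard nestedness property forces every position strictly between $i_s$ and $j_s$ to be matched internally, so $j_s - i_s$ is odd and the pair is automatically balanced. The triangularity step you sketch is sound: sorting $I = \{j_1 < \cdots < j_k\}$, any $I'$ with $f_{\mm_I}(X_{I'}) \neq 0$ is of the form $\{c_1,\dots,c_k\}$ with $c_s \in \{i_s, j_s\}$, and the $s$ distinct elements $c_1,\dots,c_s$ are all $\leq j_s$, so the $s$-th smallest element of $I'$ is $\leq j_s$; componentwise equality forces $I'=I$, giving a unimodular triangular pairing matrix against the admissible basis of Proposition~\ref{prop:adm}. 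Your route is more constructive and in fact yields the stronger conclusion that $\{f_{\mm_I}\}_{I\in A^n_k}$ is an explicit $\Z$-basis (not merely a generating set), whereas the paper's argument is shorter on combinatorics but does not single out a basis of balanced matchings. Both are correct; yours buys an explicit basis, theirs avoids the Dyck-path parity observation.
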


\begin{proof}
Let us first show that $f_{\mm}$, over all (not necessarily balanced) partial matchings $\mm$ of cardinality $k$, generate $Z(H^n)^\vee_{2k}$. Let $V$ be the Abelian group freely generated by  partial matchings of cardinality $k$. We need to show that the linear homomorphism
$$ V \to Z(H^n)^\vee_{2k}, \ \ \mm \mapsto f_{\mm}$$
is surjective. This is equivalent to showing that its dual $Z(H^n)_{2k} \to V^\vee$ is injective. Proposition~\ref{prop:adm} tells us that a basis of $Z(H^n)_{2k} $ is given by $X_I$ with $I \in A^n_k$. Therefore, what we need to check is that, if we have numbers $a_I \in \Z$ such that
\begin{equation}
\label{eq:ai}
 f_{\mm} \bigl(\sum_{I \in A^n_k} a_I X_I \bigr) = 0,
 \end{equation}
for all partial matchings $\mm$ of cardinality $k$, then $a_I = 0$ for all $I \in A^n_k$.

We will prove this claim by induction on $n$. The base case $n=0$ is clear, because $f_\emptyset = 1$. 

For the inductive step, assume the corresponding statement is true for $n-1$ and all possible $k$. Suppose we have numbers $a_I$ satisfying \eqref{eq:ai}. Consider first the partial matchings $\mm$ that consist only of pairs not involving the last two elements $2n-1$ and $2n$. For such $\mm$, we have $f_{\mm}(X_I) = 0$ when $I \cap \{2n-1, 2n\} \neq \emptyset$. If $I \in A^n_k$ has $I \cap \{2n-1, 2n\} = \emptyset$, then $I$ is an admissible subset of $\{1, \dots, 2n-2\}$, and we can also view $\mm$ as a partial matching of $\{1, \dots, 2n-2\}$. Applying the inductive hypothesis for $n-1$ and $k$, we deduce that
 \begin{equation}
\label{eq:ai0}
a_I = 0, \ \forall I \in A^n_k \text{ with } I \cap \{2n-1, 2n\} = \emptyset.
 \end{equation}

Next, consider an arbitrary partial matching $\mm$ of $\{1, \dots, 2n-2\}$ of cardinality $k-1$. If $k < n$, there exists some $i \in \{1, \dots, 2n-2\}$ that does not appear in any of the pairs in $\mm$. Define the matching
$$\mm' = \mm \cup \{(2n-1, i)\}$$
so that 
$$ f_{\mm'} = (X_{2n-1}^\vee - X_i^\vee) \cdot f_{\mm}.$$
Applying \eqref{eq:ai} for $\mm'$, and using \eqref{eq:ai0}, we get 
$$ 0= f_{\mm'} (\sum_{I \in A^n_k} a_I X_I) =  f_{\mm} \Bigl(\sum_{\substack{I = J \cup \{2n-1\} \\ \ J \in A^{n-1}_{k-1}}} a_I X_J \Bigr).$$
Since this is true for all possible $\mm$, from the inductive hypothesis for $n-1$ and $k-1$, we deduce that
 \begin{equation}
 \label{eq:ai1}
a_I = 0, \ \forall I\in A^n_k \text{ with } I \cap \{2n-1, 2n\} = \{2n-1\}.
 \end{equation}
 Observe that if $I \cap \{2n-1, 2n\} = \{2n-1\}$, the admissibility condition \eqref{eq:admi} for $I$ applied to $m=2n-1$ shows that our hypothesis $k < n$ must be satisfied. 
 
Let $\mm$ still be a partial matching of $\{1, \dots, 2n-2\}$ of cardinality $k-1$, and set
$$ \mm'' =\mm \cup \{(2n, 2n-1)\}$$
so that
$$f_{\mm''} = (X_{2n}^\vee - X_{2n-1}^\vee) \cdot f_{\mm}.$$
 Applying \eqref{eq:ai} for $\mm'$, and using \eqref{eq:ai0} and \eqref{eq:ai1}, we find that
 $$  0= f_{\mm''} (\sum_{I \in A^n_k} a_I X_I) =  f_{\mm} \Bigl(\sum_{\substack{I = J \cup \{2n\}\\J \in A^{n-1}_{k-1}}} a_I X_J \Bigr).$$
 Applying the inductive hypothesis for $n-1$ and  $k-1$, we deduce that
  \begin{equation}
 \label{eq:ai2}
a_I = 0, \ \forall I\in A^n_k \text{ with } I \cap \{2n-1, 2n\} = \{2n\}.
 \end{equation}
 
 Finally, consider an arbitrary partial matching $\mm$ of $\{1, \dots, 2n-2\}$ of cardinality $k-2$. Since $k \leq n$, we can find $i, j \in \{1, \dots, 2n-2\}$ that are not in any pair in $\mm$. Let
 $$ \mm' = \mm \cup \{(2n-1, i), (2n, j)\}$$
so that 
$$ f_{\mm'} = (X_{2n-1}^\vee - X_i^\vee) ( X_{2n}^\vee - X_j^\vee)\cdot f_{\mm}.$$
Applying \eqref{eq:ai} for $\mm'$, and using \eqref{eq:ai0}, \eqref{eq:ai1} and \eqref{eq:ai2}, we get
  $$  0= f_{\mm'} (\sum_{I \in A^n_k} a_I X_I) =  f_{\mm} \Bigl(\sum_{\substack{I = J \cup \{2n-1, 2n\}\\ J \in A^{n-1}_{k-2} }} a_I X_J \Bigr).$$
 From the inductive hypothesis for $n-1$ and  $k-2$, we conclude that
   \begin{equation}
 \label{eq:ai3}
a_I = 0, \ \forall I\in A^n_k \text{ with }  \{2n-1, 2n\} \subseteq I.
 \end{equation}
 
This shows that all $a_I$ vanish, and therefore $f_{\mm}$ generate $Z(H^n)^\vee_{2k}$. 

To see that $f_{\mm}$ for balanced $\mm$ also suffice to generate $Z(H^n)^\vee_{2k}$, we will prove that every $f_{\mm}$ is a linear combination of the balanced ones. We will do this inductively: If $\mm$ is not balanced, we will express $f_{\mm}$ as a linear combination of elements corresponding to matchings that have fewer unbalanced pairs than $\mm$.

An unbalanced partial matching $\mm$ must contain a pair of odd elements, or a pair of even elements. Suppose it contains both: a pair $(a, b)$ of odd elements, and a pair $(c, d)$ of even elements. Using the relation
$$ (X_a^\vee - X_b^\vee) (X_c^\vee- X_d^\vee) = (X_a^\vee - X_c^\vee) (X_b^\vee- X_d^\vee) +(X_a^\vee - X_d^\vee) (X_c^\vee- X_b^\vee)$$
we can turn $f_{\mm}$ into a sum $f_{\mm'} + f_{\mm''}$, where $\mm'$ and $\mm''$ have two fewer unbalanced pairs than $\mm$.

If $\mm$ does not contain both types of unbalanced pairs, without loss of generality let us suppose it only contains pairs made of even elements, in addition to possibly some balanced pairs. In total, there are more even than odd elements in the pairs in $\mm$, so there must be an odd number $a \in \{1, \dots, 2n\}$ that is not contained in any pair in $\mm$. Let $(b, c)$ be a pair in $\mm$ with $b$ and $c$ both even. Using the relation
$$ X_b^\vee- X_c^\vee = (X_b^\vee- X_a^\vee) + (X_a^\vee- X_c^\vee)$$
we can express $f_{\mm}$ as $f_{\mm'} + f_{\mm''}$, where $\mm'$ and $\mm''$ have one fewer unbalanced pair compared to $\mm$. This completes the proof.
\end{proof}

\subsection{Hochschild homology and cohomology}\label{sec:hochschild}
To study the cabled Khovanov-Rozansky homology of the $p$-framed unknot, we need to first understand the homologies of the cables of $(U, p)$. Since we restrict ourselves to level $\alpha=0$, these cables are the $(2n,2np)$-torus links $T'_{2n,2np}$, with $n$ strands positively oriented and $n$ strands negatively oriented, going through $p$ full twists. The Khovanov homology of these links was studied by Sto\v{s}i\'{c} in \cite{stosic}. There, for $p > 0$, he showed that
$$\Kh^{i, j}(T'_{2n, 2np}) = 0  \ \ \text{if} \ i > 0 \text{ or } j > 0$$
and, furthermore, in the maximal homological degree $0$ we have
$$ \Kh^{0, j}(T'_{2n, 2np}) = \begin{cases}
\Z^{{2n \choose n-k} - {2n \choose n-k-1}} & \text{if } j=-2k, \ k=0, \dots, n,\\
0 & \text{otherwise.}
\end{cases}$$
See \cite[Corollaries 2 and 4]{stosic}. For $p < 0$, the formula \eqref{eq:Ext} for the Khovanov homology of the mirror gives
$$ \Kh^{0, j}(T'_{2n, 2np}) = \begin{cases}
\Z^{{2n \choose n-k} - {2n \choose n-k-1}} & \text{if } j=2k, \ k=0, \dots, n,\\
0 & \text{otherwise.}
\end{cases}$$

Note the similarity between these answers and \eqref{eq:choose}. (When $n=1$, this was first observed by Przytycki in \cite{Prz}.) In fact, a more direct relation between $ \Kh^{0, j}(T'_{2n, 2np})$ and $Z(H^n)$ comes from \cite{Roz}. There, Rozansky constructs a Khovanov homology for framed links in $S^1 \times S^2$. We will only need the case of null-homologous links in $S^1 \times S^2$, in which case the framing dependence can be cancelled by a suitable shift in gradings, as shown by Willis in \cite{Willis}. The Khovanov homology of a null-homologous link $L \subset S^1 \times S^2$ is a well-defined bi-graded group, defined as follows.

Suppose that $L$ is given as the circular closure of a tangle $T$ from $2n$ to $2n$ points. In the standard  picture of $S^1 \times S^2$ as $0$-surgery on the unknot, this corresponds to connecting the $2n$ pairs of points by arcs going through the unknot:
\[ \input{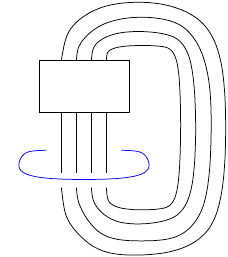_t}
\]

Recall from \eqref{eq:FT} that to the tangle $T$, Khovanov associated a complex of $(H^n, H^n)$-bimodules $\F(T)$. Rozansky's extension of Khovanov homology to links in $S^2\times S^1$ sets the invariant of $L$ to be the Hochschild homology of $\F(T)$:
\begin{equation}
\label{eq:KhHH}
 \Kh^{i, *}(S^2\times S^1; L) := \HH_i(\F(T)).
 \end{equation}
Moreover, in \cite[Theorem 6.8]{Roz}, Rozansky shows that there is a canonical isomorphism
\begin{equation}
\label{eq:introducetwists}
\Kh^{i, *}(S^2\times S^1; L) \cong \Kh^{i, *}(L(p)) \ \ \ \text{for } \ i \geq n_+ - 2p+2,
\end{equation} where $n_+$ is the number of positive crossings in a diagram for $T$, and $L(p) \subset S^3$ is the link obtained by inserting $p$ full twists in the corresponding diagram for $L$ at the place where the $2n$ arcs went through the $0$-framed unknot:
\[ \input{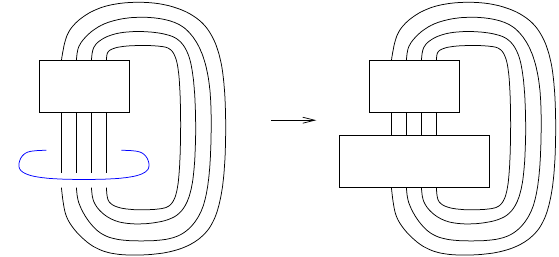_t}
\]
This isomorphism is canonical in the sense that it arises from a natural isomorphism of certain functors. Let $\operatorname{FT}_{2n}$ be the full twist tangle on $2n$ strands, so that $L(p)$ is the circular closure of $(\operatorname{FT}_{2n})^p\circ T$. According to \cite[Theorem 6.7]{Roz}, the complex $\F((\operatorname{FT}_{2n})^p)$ is a projective resolution of the identity bimodule $H^n$ in a prescribed range of degrees. Let $\mathcal{P}(-)$ be any functorial projective resolution. Then, when $i \geq n_+ - 2p+2$, the functors
$$T\to \HH_i(\F(T)) := H_i\left(\F(T)\bigotimes_{H^n\otimes (H^n)^{\operatorname{op}}}\mathcal{P}(H^n)\right)$$
and
$$T\to \Kh^{i, *}(L(p)) = H_i\left(\F(T)\bigotimes_{H^n\otimes (H^n)^{\operatorname{op}}}\F((\operatorname{FT}_{2n})^p)\right)$$
compute the same $\operatorname{Tor}$ group. In particular, the canonical isomorphism in Equation \eqref{eq:introducetwists} is natural with respect to maps induced from cobordisms between tangles. 

Let us specialize to the case when $i=0$ and $T$ is the identity tangle $\Id_{n, n}$ on $2n$ alternately oriented strands. As in \cite{MMSW}, we denote the corresponding link $L \subset S^1 \times S^2$ by $F_{n,n}$. We have 
$$F_{n,n}(p) = T'_{2n, 2np}.$$
We find that, for every $p > 0$, 
$$ \Kh^{0, *}(T'_{2n, 2np}) \cong \HH_0(H^n).$$
Using \eqref{eq:Ext}, we get a description of $\Kh^{0, *}(T'_{2n, 2np})$ for $p <0$. By the chain of isomorphisms following Equation 6.13 in \cite{Roz} with $H^n$ replaced by $\F(T)$, we have a duality isomorphism
\begin{equation}\label{eq:hochschildduality}
\HH^i(\F(T))\cong\HH_i(\F(\overline{T}))^{\vee}\{2n\},
\end{equation}
for an $(n,n)$-tangle $T$ with mirror $\overline{T}$. In particular, taking $T$ to be the identity tangle gives an isomorphism (up to a degree shift) between the Hochschild homology and cohomology of $H^n$. Applying this isomorphism for $i=0$, $$ \Kh^{0, *}(T'_{2n, 2np}) \cong \Kh^{0, *}(T'_{2n, -2np})^\vee \cong \HH_0(H^n)^{\vee} \cong \HH^0(H^n)\{-2n\} \ \text{ for } p < 0.$$ See \cite[Theorem 6.9]{Roz}. (Some care has to be taken with respect to grading conventions: Rozansky puts $X$ in degree $2$, so its quantum grading is the negative of the usual quantum grading in $\Kh$.) 

The zeroth Hochschild cohomology of a ring equals its center. Therefore, we have canonical isomorphisms
\begin{equation}
\label{eq:Khp+}
 \Kh^{0, j}(T'_{2n, 2np}) \cong Z(H^n)^{\vee}_{2n+j} \ \text{ for } \ p > 0
    \end{equation}
and
\begin{equation}
\label{eq:Khp-}
  \Kh^{0, j}(T'_{2n, 2np}) \cong Z(H^n)_{2n-j} \ \text{ for } \ p < 0.
  \end{equation}

Let us re-write \eqref{eq:Khp+} and \eqref{eq:Khp-} in terms of the Khovanov-Rozansky homology $\KhR_2$, which is related to $\Kh$ by the formula \eqref{eq:KhR2}. We are interested in $T'_{2n, 2np}$ as a framed link, in which every component has framing $p$. A diagram for this framed link is obtained from the standard diagram of the torus link $T'_{2n, 2np}$ (which has writhe $-2np$) by adding $p$ kinks in each component. The writhe of the resulting diagram is then $w=0$. Therefore, in view of \eqref{eq:KhR2}, we obtain:
\begin{equation}
\label{eq:KhRp+}
  \KhR_2^{0, j}(T'_{2n, 2np}) \cong Z(H^n)_{2n+j} \ \text{ for } \ p > 0
  \end{equation}
and
\begin{equation}
\label{eq:KhRp-}
 \KhR_2^{0, j}(T'_{2n, 2np}) \cong Z(H^n)^{\vee}_{2n-j} \ \text{ for } \ p < 0.
   \end{equation}
Note that Theorem~\ref{thm:ZHn} gives an explicit description of the center $Z(H^n)$. 

\subsection{Cobordism maps and the braid action}\label{sec:cobordismmaps}
In the definition of the cabled Khovanov-Rozansky homology we had the cobordism maps $\psi_i^{[m]}$. In our case, there is a single knot component, so we will drop the subscript $i=1$ from the notation. Further, since $N=2$, the values of $m$ can be $0$ or $1$. We will simply write $\psi$ for $\psi^{[0]}$ and $\phi$ for $\psi^{[1]}.$

Thus, we are interested in the maps:
\begin{equation}
\label{eq:psi}
\psi = \KhR_2(Z)( \cdot \otimes 1):  \KhR_2^{0, j}(T'_{2n, 2np}) \to  \KhR_2^{0, j}(T'_{2n+2, (2n+2)p})
 \end{equation}
and
\begin{equation}
\label{eq:phi}
\phi = \KhR_2(Z)( \cdot \otimes X):\KhR_2^{0, j}(T'_{2n, 2np}) \to  \KhR_2^{0, i+2}(T'_{2n+2, (2n+2)p}),
 \end{equation}
where $Z=Z_1$ is the saddle cobordism from $T'_{2n, 2np}\sqcup U$ to $T'_{2n+2, (2n+2)p}$, which introduces two new strands in the cable; cf. Section~\ref{sec:cabled}. We are interested in computing $\psi$ and $\phi$ as maps relating $Z(H^n)$ and $Z(H^{n+1})$, under the identifications \eqref{eq:KhRp+} and \eqref{eq:KhRp-}.

For this, we introduce the $(n+1, n+1)$-tangle
\[ \input{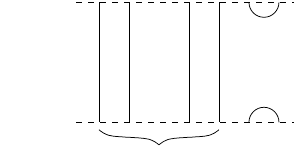_t}
\]
We denote by $M=\F(\tangle)$ the $(H^{n+1}, H^{n+1})$-bimodule associated to $\tangle$. 

Observe that the circular closure of $\tangle$ in $S^1 \times S^2$ is the following link:
\[ \begin{picture}(0,0)%
\includegraphics{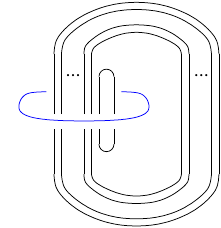}%
\end{picture}%
\setlength{\unitlength}{3158sp}%
\begingroup\makeatletter\ifx\SetFigFont\undefined%
\gdef\SetFigFont#1#2#3#4#5{%
  \reset@font\fontsize{#1}{#2pt}%
  \fontfamily{#3}\fontseries{#4}\fontshape{#5}%
  \selectfont}%
\fi\endgroup%
\begin{picture}(2202,2260)(1336,-2616)
\put(1351,-1561){\makebox(0,0)[lb]{\smash{{\SetFigFont{10}{12.0}{\rmdefault}{\mddefault}{\updefault}{\color[rgb]{0,0,1}$0$}%
}}}}
\end{picture}%

\]
This is the split disjoint union $F_{n, n} \sqcup U$, which can also be represented as the circular closure of the $(n, n)$-tangle $\Id_{n, n} \sqcup \; U$. Therefore, we have two different ways of expressing the Khovanov homology of $F_{n, n} \sqcup U$ in terms of Hochschild homology:
\begin{equation}
\label{eq:twoHH}
\HH_i(M) \cong  \Kh^{i, *}(S^2\times S^1; F_{n, n}) \cong \Kh^{i,*}(S^2\times S^1; F_{n, n}) \otimes \A \cong \HH_i(H^n) \otimes \A.
\end{equation}

To be in line with the conventions in this paper, we will work with $\KhR_2$ instead of $\Kh$; compare \eqref{eq:KhR2}. Thus, from Equation~\eqref{eq:twoHH} for $i=0$, we have 
\begin{align*}
 \HH_0(H^n)^{\vee} \otimes \A^{\vee} & \cong \HH_0(M)^{\vee},\\
\HH^0(H^n)\{-2n\}\otimes \A\{-2\} & \cong \HH^0(M)\{-2n-2\},
 \end{align*}
 where we use Equation \eqref{eq:hochschildduality} to get the second line. We thereby obtain an isomorphism
 \begin{equation}
\label{eq:Xi}
 \HH^0(H^n)\otimes \A \xrightarrow{ \phantom{a} \cong \phantom{a}} \HH^0(M).
 \end{equation}
 We have $\HH^0(H^n) = Z(H^n)$, whereas $\HH^0(M)$ is given by
\begin{equation}
\label{eq:hm}
 \HH^0(M) =\{ m \in M \mid mh = hm, \ \forall h \in H^{n+1}\}.
 \end{equation}
 
 Proposition~\ref{prop:HM} below will give an explicit formula for the isomorphism in \eqref{eq:Xi}, up to sign. Before stating it, let us introduce some notation for certain elements of the bimodule $M$. Recall that 
 $$M = \bigoplus_{a, b \in \Cr_{n+1}}  \M{a}{b},$$
 where $\M{a}{b}$ is the complex associated to the link $\bar b J a$.
 
Suppose we have a crossingless matching $a \in \Cr_n$. From $a$ we can construct crossingless matchings in $\Cr_{n+1}$ in two ways. First, we get a matching ${\ta} \in \Cr_{n+1}$ by connecting the last two endpoints $p_{2n+1}$ and $p_{2n+2}$. Then, the link $\overline{\ta} J {\ta}$ is the split disjoint union of $\bar a a$ and two unknots:
\[ \input{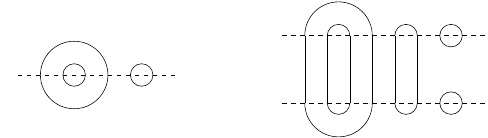_t}
\]
Given an element $x \in \Hn{a}{a}$ and $w \in \KhR_2(U \sqcup U) \cong \A \otimes \A$, we get an element 
$$ x \otimes w  \in \M{\ta}{\ta} \subset M.$$

Second, given $a \in \Cr_n$, let $\Out(a)$ denote the set of ``outer'' arcs in $a$, that is, those connecting points $p_i$ and $p_j$ (for $i < j$) such that no points $p_k$ and $p_l$ with $k < i < j < l$ are matched in $a$. For example, when $a$ is the matching
\[ \input{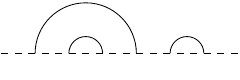_t}
\]
the outer arcs are those from $p_1$ to $p_4$, and from $p_5$ to $p_6$. For $e \in \Out(a)$ connecting $p_i$ to $p_j$ with $i < j$, we define a crossingless matching $a_{\to}^e \in \Cr_{n+1}$ by connecting $p_i$ to $p_{2n+2}$ and $p_j$ to $p_{2n+1}$. Notice that the link $\overline {a_{\to}^e} J a_{\to}^e$ is diffeomorphic to $\overline{a}a$:
\[ \input{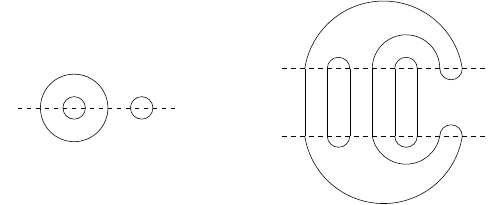_t}
\]
Under this diffeomorphism, an element $x \in \Hn{a}{a}$ produces a corresponding element
$$ x_{\to}^e  \in \M{a_\to^e}{a_\to^e} \subset M$$
 We let
 $$ x_{\to} := \sum_{e \in \Out(a)} x_{\to}^e \ \in M.$$
 The assignment $x \mapsto x_{\to}$ extends linearly to a map
 $$ \bigoplus_{a \in {\Cr_n}} \Hn{a}{a} \to \bigoplus_{a \in \Cr_{n+1}} \M{a}{a}, \ \ \ x \mapsto x_{\to}$$
 Furthermore, given $v \in \A$, we will denote by $x_{\to} \cdot v$ the result of acting by $v$ using the multiplication coming from a small unknot near $p_{2n}$. In other words, in terms of the module action described at the end of Section~\ref{sec:Hn}, we identify the variable $X \in \A$ with $X_{2n}$ and act accordingly.
 
 Let us define a map
 $$ \Theta: Z(H^n) \otimes \A \to M, \ \ \ \Theta(x \otimes v) = x \otimes \Delta(v) + x_{\to} \cdot v.$$
 Observe that the map $\Theta$ is injective (because so is $\Delta$).

\begin{proposition}
\label{prop:HM}
The image of the map $\Theta$ is $\HH^0(M) \subset M$, and the isomorphism from \eqref{eq:Xi} is given by $\pm \Theta$.
\end{proposition}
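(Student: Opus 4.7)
The plan has two steps: first, show that $\Theta$ takes values in $\HH^0(M)$; second, identify $\Theta$ with $\pm \Xi$, which then yields that $\Theta$ is an isomorphism onto $\HH^0(M)$.

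For the first step, I would unpack the definition \eqref{eq:hm} and verify $\Theta(x \otimes v) \cdot h = h \cdot \Theta(x \otimes v)$ directly for $h$ in a generating set of $H^{n+1}$. It suffices to treat separately generators supported on matchings $b \in \Cr_{n+1}$ that pair $p_{2n+1}$ with $p_{2n+2}$, and those that pair each of $p_{2n+1}, p_{2n+2}$ with earlier points. For the first type, $h$ preserves the diagonal summand $\M{\ta}{\ta}$ and the commutativity of $x \otimes \Delta(v)$ with $h$ reduces, via the centrality $x \in Z(H^n)$, to the commutativity of $\Delta(v)$ in the Frobenius algebra $\A \otimes \A$. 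For the second type, $h$ induces cobordisms mixing $\M{\ta}{\ta}$ with the summands $\M{a_\to^e}{a_\to^e}$ through cup/cap/saddle moves near the bigon of $\tangle$; here the sum over $e \in \Out(a)$ in the definition of $x_\to$ is arranged exactly so that applying such a cobordism to $x \otimes \Delta(v)$ produces $x_\to \cdot v$ up to sign, giving the required cancellation.

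For the second step, both $Z(H^n) \otimes \A$ and $\HH^0(M)$ are free abelian groups of rank $2\binom{2n}{n}$ (the rank of the target following from \eqref{eq:Xi} together with \eqref{eq:choose}), and $\Theta$ is injective since $\Delta$ is and the two summands of $\Theta$ are supported on disjoint bimodule components $\M{a}{b}$. To pin $\Theta$ down as $\pm \Xi$, I would use that both maps are equivariant under the natural action of $R^{\otimes 2n+2}$: on the source via the $R^{\otimes 2n}$-algebra structure on $Z(H^n)$ from Proposition~\ref{prop:ZHn} together with the $\A$-factor acting on itself, and on the target by small unknots near each of the $2n+2$ endpoints of $\tangle$. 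Since $Z(H^n) \otimes \A$ is generated by $1 \otimes 1$ as an $R^{\otimes 2n+2}$-module, it remains only to check that $\Theta(1 \otimes 1)$ and $\Xi(1 \otimes 1)$ both equal the unit $1_{H^{n+1}} \in \HH^0(M)$ up to an overall sign, the sign reflecting the freedom in the Frobenius-trace duality used to construct $\Xi$.

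The main obstacle is the cobordism-level computation in the first step, specifically for matchings of the second type. One must track how the bigon of $\tangle$ interacts with the cup/cap/saddle cobordisms arising from multiplication by $h$, and verify that the Frobenius coproduct contribution from $\Delta(v)$ precisely matches the outer-arc contribution $x_\to \cdot v$ under each interaction, after summing over $e \in \Out(a)$. The signs arising from the $X_i = (-1)^i X$ convention in the $R^{\otimes 2n+2}$-action must be handled carefully so that the cancellations in both the commutation check and the matching with $\Xi$ work out cleanly.
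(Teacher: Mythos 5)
Your proposal inverts the direction of the paper's argument in a way that opens a genuine gap. The paper's proof first solves the centrality equations \eqref{eq:hm} directly: decomposing a central $m = m' + m''$ along $\M{\ta}{\ta}$ versus $\M{a_\to^e}{a_\to^e}$, deducing from commutation with idempotent-type elements that $m' \in Z(H^n)\otimes \Delta(\A)$, and from commutation with transition elements $h \in \Hnp{\ta}{a_\to^e}$ that $m''$ is determined from $m'$ via the $(\,\cdot\,)_\to$ construction. This \emph{is} the hard containment $\HH^0(M) \subseteq \operatorname{Im}(\Theta)$, and the converse (your Step 1) is the easy check the paper dispatches in one sentence. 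You propose to prove only the easy containment and then leverage rank equality plus $R$-module equivariance to conclude $\Theta = \pm\Xi$ and hence surjectivity.

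The problem is integral, not rational. Once you know $\operatorname{Im}(\Theta)\subseteq\HH^0(M)$, $\Theta$ injective, and that both sides are free of rank $2\binom{2n}{n}$, you can only conclude that $\Theta$ embeds the source as a \emph{finite-index} free subgroup of $\HH^0(M)$. Your proposed fix is to show $\Theta(1\otimes 1) = \pm\Xi(1\otimes 1)$ so that the cyclic generators match, but ``it remains only to check'' is exactly where the argument is incomplete: you need $\Theta(1\otimes 1)$ to be a $\Z$-generator of the bottom graded piece of $\HH^0(M)$, not merely a nonzero element of a copy of $\Z$. Verifying that requires knowing what a generator of $\HH^0(M)$ looks like inside $M$ — which is the content of the paper's explicit solution of \eqref{eq:hm} that you are trying to avoid. (Incidentally, $\Theta(1\otimes 1)$ is an element of the bimodule $M = \F(\tangle)$, not of $H^{n+1}$, so it cannot literally equal $1_{H^{n+1}}$; and the relevant module structure is over $R^{\otimes(2n+1)}$, not $R^{\otimes(2n+2)}$, since $X_{2n+2}$ acts by $-X_{2n+1}$ on $M$ because the tangle $\tangle$ joins those two endpoints.) Your Step 2 rigidity idea is essentially the paper's, restated via cyclic generation instead of uniqueness of module automorphisms, and is fine \emph{given} that $\operatorname{Im}(\Theta) = \HH^0(M)$ has already been established; but that equality is precisely what your outline does not actually prove.
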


\begin{proof}
We will determine the structure of elements of $\HH^0(M)$ in order to identify it as the image of $\Theta$. Given $m \in \HH^0(M)$, let us write
$$ m = \sum_{a, b \in \Cr_{n+1}} \! \! \! \! \m{a}{b} \ , \ \  \m{a}{b} \in \M{a}{b}. $$
The defining property of elements $m \in \HH^0(M)$ is that they commute with all elements of $H^{n+1}$; cf. \eqref{eq:hm}. In particular, they commute with the idempotents ${}_a1_a$ corresponding to each $a \in \Cr_{n+1}$. It follows that
$$  \m{a}{b} = 0 \ \ \text{if} \ a\neq b.$$

Note that every crossingless matching $b \in \Cr_{n+1}$ is either of the form $\ta$ or $a_{\to}^e$, for some $a \in \Cr_n$ and $e \in \Out(a)$. Let us write
\begin{equation}
\label{eq:mmm}
 m=m' + m''
 \end{equation}
where
\begin{equation}
\label{eq:mpp}
 m' = \sum_{ a \in \Cr_n} \m{\ta}{\ta} \ \ \ \text{and} \ \ \ m'' =  \sum_{a \in \Cr_n} \sum_{e \in \Out(a)} \m{a_{\to}^e}{a_{\to}^e}.
\end{equation}

Let us analyze the commutation relations between $m$ and elements 
$$ h \in \bigoplus_{ a \in \Cr_n} \Hnp{\ta}{\ta} \cong  \bigoplus_{ a \in \Cr_n}  \Hn{a}{a} \otimes \A \cong H^n \otimes \A.$$
We have $m''h=hm''=0$, and therefore $m'h =hm'.$ Notice that  
$$m' \in \bigoplus_{ a \in \Cr_n} \M{\ta}{\ta} \cong \bigoplus_{ a \in \Cr_n}\Hn{a}{a} \otimes \A^{\otimes 2} \cong H^n \otimes \A^{\otimes 2}.$$
Letting $m'=\sum\limits_{0\leq i,j\leq 1} w_{i,j}\otimes X^i\otimes X^j$ and $h=v\otimes X^k$ for $w_{i,j,},v\in H^n$ and $k=0,1$, the relation $m'h = hm'$ implies that
\begin{equation}\label{eq:commutation}
\sum\limits_{i,j} (w_{i,j}v)\otimes X^i \otimes X^{j+k} = \sum\limits_{i,j} (vw_{i,j})\otimes X^{i+k}\otimes X^j
\end{equation}
Taking $k=0$, we find that $w_{i,j}v=vw_{i,j}$ for each $i,j$. Since this holds for all $v$, we have $w_{i,j}\in Z(H^n)$. Taking $v=1$ and $k=1$, \eqref{eq:commutation} gives that $w_{00}=0$ and $w_{01}=w_{10}$. Thus, $m'=w\otimes (1\otimes X+X\otimes 1)+w'\otimes X\otimes X$, with $w,w'\in Z(H^n)$. Note that the span of $1\otimes X+X\otimes 1$ and $X\otimes X$ is precisely the image of the comultiplication $\Delta$. Therefore,
\begin{equation}
\label{eq:mp}
 m' \in Z(H^n) \otimes \Delta(\A).
 \end{equation}

Next, let $a \in \Cr_n$ and $e \in \Out(a)$. Consider the element $h \in \Hn{\ta}{a_{\to}^e}$ obtained by marking with $1$ all the circles in the tangle $\overline{a_{\to}^e} \ta$. The commutation $mh=hm$ reduces to the relation:
\begin{equation}
\label{eq:mah}
 (\m{\ta}{\ta} ) \cdot h = h \cdot (\m{a_{\to}^e}{a_{\to}^e}).
 \end{equation}
 Observe that the left hand side of \eqref{eq:mah} is given by multiplying the values from two circles in $\overline{a_{\to}^e} J \ta$ (the one containing the arc $e$ and the one at the top through $p_{2n+1}$ and $p_{2n+2}$), while keeping the values on the other circles of $\overline{a_{\to}^e} J \ta$ the same, as in the following example:
\[ \input{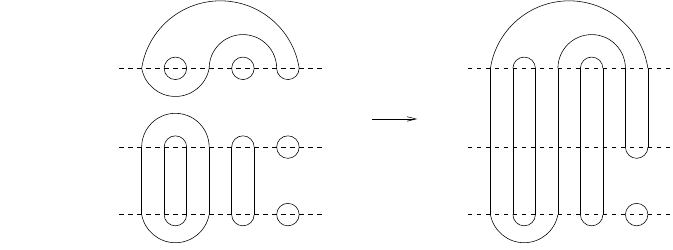_t}
\]
 
On the other hand, the right hand side of \eqref{eq:mah} is given by the comultiplication $\Delta$ applied to the copy of $\A$ coming from the circle going through the last two points $p_{2n+1}$ and $p_{2n+2}$:
 \[ \input{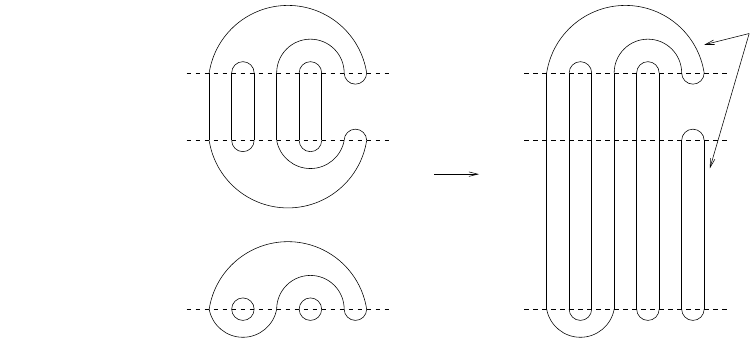_t}
\]

Therefore, if we write
$$\m{\ta}{\ta} = \sum_i x_i \otimes \Delta(v_i)  \in \Hn{a}{a} \otimes \Delta(\A),$$
from \eqref{eq:mah} we deduce that
\begin{equation}
\label{eq:mae}
 \m{a_{\to}^e}{a_{\to}^e}= \sum_i (x_i)_{\to}^e \cdot v_i.
 \end{equation}
It follows from \eqref{eq:mmm}, \eqref{eq:mpp}, \eqref{eq:mp} and \eqref{eq:mae} that $m$ is in the image of $\Theta$. Conversely, it can be checked that all the elements in the image of $\Theta$ commute with every $h \in H^{n+1}$. Therefore,
$$ \HH^0(M) = \operatorname{Im}(\Theta) \subset M.$$

To pin down the isomorphism \eqref{eq:Xi}, we use the module action by $R^{\otimes (2n+1)}$. Recall from Proposition~\ref{prop:ZHn} the description of $\HH^0(H^n) =Z(H^n)$ as an $R^{\otimes 2n}$-algebra. It follows that 
$$ Z(H^n) \otimes \A \cong R^{\otimes(2n+1)}/\bigl \langle \sum_{|I| =k} X_I, \ \ k=1, \dots, 2n\bigr \rangle.$$
On the other hand, as noted at the end of Section~\ref{sec:Hn}, an $H^{n+1}$-module such as $M$ admits an action of $R^{2n+2}$ (say, from the points at the bottom of the tangle). Since the last two points are connected in the tangle $J$, it follows that $X_{2n+2}$ acts on $M$ by $-X_{2n+1}$. We can thus focus on the action of $R^{\otimes (2n+1)}$ on $M$, using the first $2n+1$ variables. This descends to an action of $R^{\otimes (2n+1)}$ on $\HH^0(M) \subset M$. The constructions in \cite{Roz} preserve the $R^{\otimes (2n+1)}$ actions, and therefore the isomorphism \eqref{eq:Xi} is one of (graded) $R^{\otimes (2n+1)}$-modules. 

Observe also that the map $\Theta$ preserves the module actions. A graded automorphism of 
\begin{equation}
\label{eq:Ro}
R^{\otimes(2n+1)}/\bigl \langle \sum_{|I| =k} X_I, \ \ k=1, \dots, 2n\bigr \rangle
\end{equation}
as an $R^{\otimes (2n+1)}$-module is determined by the image of $1^{\otimes(2n+1)}$. The only elements in $R^{\otimes(2n+1)}$ with the same grading as $1^{\otimes(2n+1)}$ are elements of the form $m_1\otimes\dots\otimes m_{2n+1}$ for $m_i\in\Z\subset R$. In order for this to be an automorphism, we must have that all of the $m_i$ are $\pm 1$. That is, the only graded automorphisms of \eqref{eq:Ro} are $\pm \Id$. It follows that given two modules isomorphic to \eqref{eq:Ro}, the isomorphism between them is uniquely determined up to sign. Therefore, \eqref{eq:Xi} must be given by $\pm \Theta$. (We conjecture that it is $\Theta$.) 
\end{proof}

We can now compute the maps from \eqref{eq:psi} and \eqref{eq:phi}.

\begin{proposition}
\label{prop:plus}
Let $p > 0$. Under the identification \eqref{eq:KhRp+}, and using the description of $Z(H^n)$ from Theorem~\ref{thm:ZHn}, the maps $\psi$ and $\phi$ from \eqref{eq:psi}, \eqref{eq:phi} are given by
\begin{equation}
\label{eq:psi+}
 \psi: Z(H^n)_{2n+j} \to Z(H^{n+1})_{2n+2+j}, \ \ \psi(X_I) = \pm X_I \cdot (X_{2n+2}-X_{2n+1})\end{equation}
and
\begin{equation}
\label{eq:phi+}
 \phi: Z(H^n)_{2n+j} \to Z(H^{n+1})_{2n+4+j}, \ \ \phi(X_I) = \pm X_I \cdot (-X_{2n+1}X_{2n+2}).
 \end{equation}
 \end{proposition}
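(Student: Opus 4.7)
My plan is to realize the cobordism maps $\psi$ and $\phi$ at the level of bimodule complexes via Rozansky's circular closure, then pin down the images by equivariance and symmetry.

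By the stabilization isomorphism \eqref{eq:introducetwists}, the saddle $Z$ corresponds to a tangle cobordism $S\colon \tangle \to \Id_{n+1,n+1}$ given by a single saddle replacing the cap-cup configuration of $\tangle$ with two vertical strands.  This induces a chain map $\F(S)\colon M \to H^{n+1}$ of $(H^{n+1},H^{n+1})$-bimodule complexes; taking $\HH^0$ and composing with $\pm\Theta$ from Proposition~\ref{prop:HM} gives $\psi\oplus\phi\colon Z(H^n)\otimes\A \to Z(H^{n+1})$ up to a global sign.  Since $S$ is supported away from the first $2n$ boundary points, $\F(S)$ is equivariant for the $R^{\otimes 2n}$-action by dots near $p_1,\dots,p_{2n}$, so that $\psi(X_I) = X_I\cdot\psi(1)$ and $\phi(X_I) = X_I\cdot\phi(1)$ for $I\subseteq\{1,\dots,2n\}$.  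It therefore suffices to compute $\psi(1)\in Z(H^{n+1})_2$ and $\phi(1)\in Z(H^{n+1})_4$.

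These are constrained by symmetry.  The tangle $\tangle$ is symmetric under permutations of its first $2n$ strands, forcing $\psi(1)$ and $\phi(1)$ to be $S_{2n}$-invariant.  The arc in $\tangle$ connecting $p_{2n+1}$ and $p_{2n+2}$ imposes the relation $X_{2n+1}+X_{2n+2}=0$ on $M$, hence $(X_{2n+1}+X_{2n+2})\cdot\psi(1) = 0$ and $(X_{2n+1}+X_{2n+2})\cdot\phi(1) = 0$ in $Z(H^{n+1})$.  Combining this with the center relations of Theorem~\ref{thm:ZHn}, a short linear-algebra argument forces $\psi(1) = c_1(X_{2n+2}-X_{2n+1})$ and $\phi(1) = c_2\,X_{2n+1}X_{2n+2}$ for some $c_1, c_2\in\Z$.

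To pin down $c_1,c_2$ up to sign, I would compute the $\M{\ta}{\ta}$-component of $\F(S)$ directly.  For each $a\in\Cr_n$, the link $\bar{\ta}\tangle\ta$ has two extra circles relative to $\bar{\ta}\ta$ (one from the cap plus the bottom arc of $\ta$, one from the cup plus the top arc of $\bar{\ta}$), and the saddle merges them via the Frobenius multiplication $\A\otimes\A\to\A$.  Evaluating $\F(S)$ on the explicit formulas $\Theta(1\otimes 1) = 1\otimes(1\otimes X + X\otimes 1) + 1_{\to}$ and $\Theta(1\otimes X) = 1\otimes(X\otimes X) + 1_{\to}\cdot X$, the $\M{\ta}{\ta}$-contributions yield the desired elements of $Z(H^{n+1})$ after identifying $X\in\A$ with the appropriate dot action.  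The main obstacle is bookkeeping in this last step: while the $\M{\ta}{\ta}$-contributions are straightforward, the $\M{a_{\to}^{e}}{a_{\to}^{e}}$-contributions involve saddles that reconnect circles through the outer arc $e$ and require some care; however, the symmetry argument above guarantees that the result already has the claimed form, so one only needs to verify the sign in a single small case (for instance, $n=1$, where $Z(H^2)_4$ is nonzero and the sign of $\phi$ is visible).
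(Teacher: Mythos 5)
Your proposal shares the same framework as the paper's proof up to the point of reducing to computing $\psi(1)$ and $\phi(1)$: both set up the saddle cobordism $S$ from $\tangle$ to $\Id_{n+1,n+1}$, pass through $\F(S)$ on Hochschild cohomology, identify the source with $Z(H^n)\otimes\A$ via $\pm\Theta$, and use $R^{\otimes 2n}$-equivariance to reduce to evaluating on $1$. Where you diverge is in the final step: the paper simply plugs the explicit formula $\Theta(1\otimes v) = 1\otimes\Delta(v) + 1_{\to}\cdot v$ into $\F(S)$ and reads off the answer in two or three lines, whereas you propose to constrain the answer by symmetry first and only compute a scalar at the end.

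The symmetry shortcut, as written, has two real gaps. First, the asserted $S_{2n}$-equivariance of $\psi$ and $\phi$ is not justified. The only permutation action that comes from actual symmetries of the oriented cable $T'_{2n,2np}$ is $S_n\times S_n$ (via $B_{n,n}$), which is how the braid action is used in the definition of cabled homology. The full $S_{2n}$ action on $Z(H^n)$ is an algebraic fact of Khovanov's (induced from conjugation by bimodule functors $\F(\beta)\otimes(-)\otimes\F(\beta)^{-1}$); showing that a cobordism map such as $\F(S)$ restricted to $\HH^0$ intertwines these induced $S_{2n}$-actions requires an argument about how braid-group functors interact with the saddle, and this is not automatic from ``$\tangle$ is symmetric under permutations of its first $2n$ strands.'' Second, even granting equivariance, the constraints you list only determine $\psi(1)$ and $\phi(1)$ up to integer scalars $c_1, c_2$. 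The proposition needs $|c_1|=|c_2|=1$ (the global $\pm$ being the only ambiguity, coming from $\Theta$), and you would have to check this separately for every $n$ unless you first argue that the scalars are independent of $n$ --- a small case such as $n=1$ alone does not suffice. By contrast, the paper's direct evaluation of $\F(S)\circ\Theta$ on $1\otimes 1$ and $1\otimes X$ produces the exact coefficients $\pm1$ simultaneously for all $n$ with essentially no extra work, since the formula for $\Theta$ is already in hand from Proposition~\ref{prop:HM}. I'd recommend just carrying out that computation rather than routing through symmetry.
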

 
 \begin{proof}
 Consider the saddle cobordism $S$ from the $(n+1, n+1)$-tangle $J$ to the identity $\Id_{n+1, n+1}$:
  \[ \input{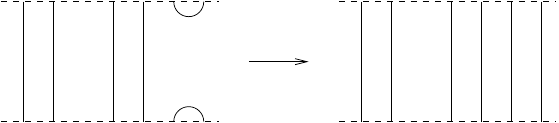_t}
\]
 This induces a cobordism map between the associated $(H^{n+1}, H^{n+1})$-bimodules:
 $$ M=\F(J) \xrightarrow{\phantom{bla}\F(S) \phantom{bla}} \F(\Id_{n+1, n+1}) = H^{n+1}.$$
By restriction, we get a map on Hochschild cohomology
 $$\F(S): \HH^0(M) \to \HH^0(H^{n+1}).$$

By taking circular closures in $S^1 \times S^2$, the cobordism $S$ produces a cobordism in $[0,1] \times S^1 \times S^2$ between the links $F_{n, n} \sqcup U$ and $F_{n+1, n+1}$. Furthermore, by introducing $p$ full twists in place of the $0$-framed unknot, we get the saddle cobordism $Z$ from $T'_{2n, 2np} \sqcup U$ to $T'_{2n+2, (2n+2)p}$ which produces the maps $\psi$ and $\phi$. 

Equation \eqref{eq:introducetwists} relates the Khovanov homology of a link $L \subset S^1 \times S^2$ to its counterpart $L(p) \subset S^3$ obtained by introducing $p>0$ full twists.  We get an  isomorphism in homological degree $i=0$ provided that $L$ has no positive crossings. By the naturality of this isomorphism, there is a commutative diagram
$$\xymatrix{
\KhR^{0, *}_2( T'_{2n, 2np} \sqcup U) \ar[d] \ar[rr]^{\KhR^{0, *}_2(Z)} & & \KhR^{0, *}_2( T'_{2n+2, (2n+2)p}) \ar[d] \\Z(H^n) \otimes \A \ar[r]^{\cong} &   \HH^0(M) \ar[r]^{\F(S)} & \HH^0(H^{n+1}).
 }$$
 The first isomorphism in the bottom row is \eqref{eq:Xi}, which is $\pm \Theta$ according to Proposition~\ref{prop:HM}. Therefore,
 $$ \psi(x) = \pm \F(S)(\Theta(x \otimes 1)), \ \ \ \phi(x) =   \pm \F(S)(\Theta(x \otimes X).$$
 The maps $\psi$ and $\phi$ preserve the $R^{\otimes 2n}$-module action, so to describe them it suffices to evaluate them on $1$. We have
 \begin{align*}\psi(1) &= \pm \F(S)(\Theta(1 \otimes 1)) \\
 &=  \pm \F(S)(1 \otimes \Delta(1) + 1_{\to})\\
 &= \pm \F(S) \Bigl( \sum_{a \in \Cr_n} ({}_{\ta}1_{\ta}) \otimes (1\otimes X + X \otimes 1) +\sum_{a \in \Cr_n} \sum_{e \in \Out(a)} 1_{\to}^e \Bigr) \\
 &= \pm \Bigl(   \sum_{a \in \Cr_n} 2({}_{\ta}(X_{2n+2})_{\ta})+ \sum_{a \in \Cr_n} \sum_{e \in \Out(a)}\bigl(  {}_{a_{\to}^e}(X_{2n+2})_{a_{\to}^e}- {}_{a_{\to}^e}(X_{2n+1})_{a_{\to}^e}\bigr)\Bigr)\\
 & = \pm (X_{2n+2} -X_{2n+1}),
 \end{align*}
 where in the last equation we used the fact that $X_{2n+1} = - X_{2n+2}$ on summands of $H^{n+1}$ of the form $\Hnp{\ta}{\ta}.$ This proves \eqref{eq:psi+}.
 
Moreover,
 \begin{align*}\phi(1) &= \pm \F(S)(\Theta(1 \otimes X)) \\
 &=  \pm \F(S)(1 \otimes \Delta(X) + 1_{\to}\cdot X )\\
 &= \pm \F(S) \Bigl( \sum_{a \in \Cr_n} ({}_{\ta}1_{\ta}) \otimes X\otimes X +\sum_{a \in \Cr_n} \sum_{e \in \Out(a)} 1_{\to}^e \cdot X_{2n+2} \Bigr) \\
 &= \pm \Bigl(0 + \sum_{a \in \Cr_n} \sum_{e \in \Out(a)}    \bigl(  {}_{a_{\to}^e}(-X_{2n+1}X_{2n+2})_{a_{\to}^e}\bigr) \Bigr)\\
 & = \mp (X_{2n+1}X_{2n+2}).
 \end{align*}
This proves \eqref{eq:phi+}.
  \end{proof}

 \begin{proposition}
\label{prop:minus}
Let $p < 0$. Under the identification \eqref{eq:KhRp-}, and using the description of $Z(H^n)$ from Theorem~\ref{thm:ZHn}, the maps $\psi$ and $\phi$ from \eqref{eq:psi}, \eqref{eq:phi} are 
$$ \psi: Z(H^n)^\vee_{2n-j} \to Z(H^{n+1})^\vee_{2n+2-j}$$
given by
\begin{equation}
\label{eq:psi-}
 \psi(f)(X_I) = \pm\begin{cases} 
 f(X_J) & \text{if } I = J \cup \{2n+2\}, \ J \subseteq \{1, \dots, 2n\} \\
- f(X_J) & \text{if } I = J \cup \{2n+1\}, \ J \subseteq \{1, \dots, 2n\} \\
 0 & \text{otherwise,}
 \end{cases}
 \end{equation}
and
$$ \phi: Z(H^n)^\vee_{2n-j} \to Z(H^{n+1})^\vee_{2n-j}$$
given by
\begin{equation}
\label{eq:phi-}
 \phi(f)(X_I) =\pm \begin{cases}
 f(X_I) & \text{if } I \subseteq \{1, \dots, 2n\} \\
 0 & \text{otherwise.}
 \end{cases}
 \end{equation}
 \end{proposition}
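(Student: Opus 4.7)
I would derive Proposition~\ref{prop:minus} from Proposition~\ref{prop:plus} by Frobenius duality. The key is that, by \eqref{eq:KhR2} combined with the universal coefficients identification \eqref{eq:Ext} (whose Ext-term vanishes thanks to Sto\v{s}i\'{c}'s vanishing of $\Kh^{i,*}$ in positive homological degree), the identification \eqref{eq:KhRp-} arises from dualizing \eqref{eq:KhRp+} together with the mirror symmetry between $T'_{2n,2np}$ and $T'_{2n,2n|p|}$. Under this chain of identifications, $\KhR_2^{0,*}(Z)$ for $p<0$ corresponds to the Khovanov cobordism map $\Kh^{0,*}(Z')$ for $Z'$ the analogous saddle in the $p>0$ setting; by Rozansky's theorem this is the pushforward $\F(S)_*: \HH_0(M) \to \HH_0(H^{n+1})$ of the same bimodule map $\F(S): M \to H^{n+1}$ used in the proof of Proposition~\ref{prop:plus}, now taken on Hochschild homology rather than cohomology.

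Because $H^n$ is a Frobenius algebra, the trace pairing identifies $\HH_0(H^n) \cong \HH^0(H^n)^\vee\{-2n\} = Z(H^n)^\vee\{-2n\}$, so $\F(S)_*$ on $\HH_0$ translates, via Frobenius duality on both source and target, into the Frobenius adjoint of the map on $\HH^0$ computed in Proposition~\ref{prop:plus}. Since the latter amounts to multiplication by $\alpha \in Z(H^{n+1})$ where $\alpha = X_{2n+2}-X_{2n+1}$ (for the $\psi$-component) or $-X_{2n+1}X_{2n+2}$ (for the $\phi$-component), the Frobenius adjoints take a dual basis element $X_J^\vee \in Z(H^n)^\vee$ to the linear functional on $Z(H^{n+1})$ that extracts the $X_J$-coefficient in the product with $\alpha$; when $\alpha$ is a monomial or a sum of two monomials this is exactly the ``append-one-index-with-sign'' operation of \eqref{eq:psi-} and the ``restrict-to-indices-in-$\{1,\dots,2n\}$'' operation of \eqref{eq:phi-}. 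The Frobenius pairing on $\A$ (under which $1 \leftrightarrow X^\vee$ and $X \leftrightarrow 1^\vee$) correctly matches the insertions of $1$ and $X$ into the cobordism map.

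The principal obstacle is carefully tracking sign conventions and grading shifts through the entire chain---the non-canonical isomorphism \eqref{eq:KhR2}, the universal-coefficient duality, Rozansky's isomorphism, and the Frobenius adjunctions on both $H^n$ and on $\A$. The parity rule $X_i = (-1)^i \cdot (\text{dot at } p_i)$ from Section~\ref{sec:Hn} must be threaded through consistently, and the sign ambiguity $\pm\Theta$ in Proposition~\ref{prop:HM} propagates to the signs appearing in \eqref{eq:psi-} and \eqref{eq:phi-}. Well-definedness of the dualized formulas on the quotient $Z(H^{n+1})$ is automatic because $\F(S)$ is a bimodule map and Frobenius dualization preserves this structure.
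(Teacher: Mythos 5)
Your plan shares the same underlying philosophy as the paper's proof: the $p<0$ map comes from the cobordism $S^r$ (the time-reversal of $S$), which is the Frobenius/TQFT adjoint of $\F(S)$, and the target $Z(H^n)^\vee$ arises because for $p<0$ one lands in Hochschild \emph{homology} rather than cohomology. What the paper actually does is write down the dual (not yet adjoint) statement — a commutative square relating $(\KhR_2^{0,*}(Z))^\vee$ to $\Theta^{-1}\circ\F(S^r)\colon Z(H^{n+1})\to Z(H^n)\otimes\A$ — and then compute $\Theta^{-1}\circ\F(S^r)$ directly. The crucial computational device there is the $R^{\otimes(2n+1)}$-module structure: since $\Theta^{-1}\circ\F(S^r)$ is $R^{\otimes 2n}$-linear, it suffices to evaluate it on $1$, $X_{2n+1}$, $X_{2n+2}$ and $X_{2n+1}X_{2n+2}$, which is done using the explicit form of $\Theta$ from Proposition~\ref{prop:HM} and of $\F(S^r)$. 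The formulas \eqref{eq:psi-} and \eqref{eq:phi-} then drop out by ordinary linear duality. At no point does the paper need to know the trace pairing on $Z(H^n)$ explicitly.

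The gap in your write-up is that the decisive step is only asserted. You claim that the Frobenius adjoint of multiplication by $\alpha$ "takes a dual basis element $X_J^\vee$ to the linear functional \ldots that extracts the $X_J$-coefficient in the product with $\alpha$," i.e.\ that the adjoint in the trace pairing coincides with the naive transpose in the $\{X_I\}$-basis. That is precisely what \eqref{eq:psi-}, \eqref{eq:phi-} say, and it is not a formality: the Frobenius trace pairing between $Z(H^n)=\HH^0(H^n)$ and $\HH_0(H^n)$ does not make $\{X_I\}$ into anything like an orthonormal family, so there is no a priori reason the trace-adjoint should transpose that way. (For example, already for $n=0,1$ one sees $\psi_{p>0}(1)=\pm(X_2-X_1)=\pm 2X_2$ in $Z(H^1)$ whereas $\psi_{p<0}(1^\vee)=X_2^\vee$ in $Z(H^1)^\vee$, so the naive matrix transpose in the monomial bases does not literally match; only after the correct chain of identifications, which the paper produces by direct computation of $\F(S^r)$, do the formulas come out right.) To complete your route you would need either an explicit description of the relevant trace pairings (on $Z(H^n)$, on $\HH^0(M)\cong Z(H^n)\otimes\A$, and compatibility of $\Theta$ with them), or you would end up re-deriving the paper's evaluation of $\Theta^{-1}\circ\F(S^r)$ on the generators $1,X_{2n+1},X_{2n+2},X_{2n+1}X_{2n+2}$ anyway. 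You also rely implicitly on the fact that $\F(S^r)$ equals the Frobenius adjoint of $\F(S)$ and that all the intermediate isomorphisms (mirror duality, Rozansky's isomorphism, the $\A^\vee\cong\A$ identification) are isometries for the relevant pairings; these are plausible TQFT facts but are not spelled out in the paper, which is another reason the paper sidesteps the adjunction and computes $\F(S^r)$ directly.
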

 
  \begin{proof}
 This is similar to Proposition~\ref{prop:plus}, except we should consider the reverse saddle cobordism $S^r$ from the identity tangle $\Id_{n+1,  n+1} $ to $J$. This gives a map on Hochschild cohomology
 $$ \F(S^r): \HH^0(H^{n+1}) \to \HH^0(M).$$
We get a commutative diagram
 $$\xymatrix{
\bigl(  \KhR^{0, *}_2( T'_{2n+2, (2n+2)p})\bigr)^\vee \ar[d] \ar[rr]^{(\KhR^{0, *}_2(Z))^\vee} & &  \bigl(\KhR^{0, *}_2( T'_{2n, 2np} \sqcup U)\bigr)^\vee \ar[d] \\\HH^0(H^{n+1}) \ar[r]^{\F(S^r)} &   \HH^0(M) \ar[r]^{\cong} & Z(H^n) \otimes \A
 }$$
 where the isomorphism in the last arrow at the bottom is $\pm \Theta^{-1}$. To compute $\Theta^{-1} \circ \F(S^r)$, observe that this preserves the $R^{\otimes 2n}$-module structure, and therefore it suffices to evaluate it on $1$, $X_{2n+1}$, $X_{2n+2}$ and $X_{2n+1}X_{2n+2}$. A straightforward calculation gives
\begin{align*}
&(\Theta^{-1} \circ \F(S^r))(1) = 1 \otimes 1,\\
& (\Theta^{-1} \circ \F(S^r))(X_{2n+2}) =  (\Theta^{-1} \circ \F(S^r))(-X_{2n+1}) = 1 \otimes X,\\
&  (\Theta^{-1} \circ \F(S^r))(X_{2n+1}X_{2n+2}) = 0.
\end{align*}
This describes (up to sign) the dual of the map $\KhR^{0, *}_2(Z)$. By taking duals, we obtain the desired description of the maps $\psi$ and $\phi$.
 \end{proof}
 
 \begin{remark}
In Propositions~\ref{prop:plus} and \ref{prop:minus} we only specified the maps $\phi$ and $\psi$ up to a sign. We conjecture that all symbols $\pm$ should be $+$, and $\mp$ should be $-$.
 \end{remark}

One last ingredient in the definition of cabled Khovanov-Rozansky homology is the braid group action. In \cite[Section 5]{KhSpringer}, Khovanov proves that the braid group action on $H^n$ induces an action of the symmetric group $S_{2n}$ on the center $Z(H^n)$, which permutes the variables $X_i$. In our case, we are interested in the subgroup $B_{n, n}$, which acts on $Z(H^n)$ via the product $S_n \times S_n$. The first factor permutes the odd variables $X_{2i+1}$, and the second the even variables $X_{2i}$. 

\subsection{Proof of Proposition~\ref{unknotp}}\label{sec:unknotpproof}
From the definition of cabled Khovanov-Rozansky homology, we have
 \[
\cKhR_{2,0}^{0, j}(U, p) =  \Bigl( \bigoplus\limits_{n\in\N} \KhR_2^{0, 2n+j}(T'_{2n, 2np}) \Bigr)/ \sim
\]
where we divide by the linear and transitive closure of the relations of the form
\begin{equation}
\beta_i(b)v \sim v, \ \ \psi(v) \sim 0,\ \ \phi(v) \sim v.
\end{equation}

In the case $p > 0$, using \eqref{eq:KhRp+} we get
\[
\cKhR_{2,0}^{0, j}(U, p) \cong  \Bigl( \bigoplus\limits_{n\in\N} Z(H^n)_{4n+j} \Bigr)/ \sim
\]
Since we  divide by the relations $\psi(v) \sim 0$, and $\psi$ is given in \eqref{eq:psi+} by multiplication with $\pm (X_{2n+2} - X_{2n+1})$, we find that the variables $X_{2n+1}$ and $X_{2n+2}$ are identified in  the quotient. Using \eqref{eq:phi+}, we get that, up to a sign, $\phi$ is given by multiplication with 
$$ X_{2n+1}X_{2n+2} =  X_{2n+1}^2 = 0.$$
Therefore, after dividing by the relations $\phi(v) \sim v$, everything collapses to zero:
$$ \cKhR_{2,0}^{0, j}(U, p)=0.$$

Let us now consider the case $p < 0$. Using \eqref{eq:KhRp-}, we get
\begin{equation}
\label{eq:2-}
\cKhR_{2,0}^{0, j}(U, p) =  \Bigl( \bigoplus\limits_{n\in\N} Z(H^n)^\vee_{-j} \Bigr)/ \sim
\end{equation}
It follows that $\cKhR_{2,0}^{0, *}(U, p)$ is supported in quantum gradings of the form $j=-2k$ for $k \geq 0$.

We start by looking at the quantum grading $j=0$. From Theorem~\ref{thm:ZHn} we see that each $Z(H^n)_0$ is a copy of $\Z$ (generated by $1$), and hence the same is true for $Z(H^n)^\vee_0$. In the equivalence relation all relations of the form $\psi(v) \sim 0$ are trivial because the targets of the maps $\psi$ are in  degrees $j < 0$. The braid group action is the identity, and from \eqref{eq:phi-} we see that the maps 
$$ \phi: Z(H^n)^\vee_0 \to Z(H^{n+1})^\vee_0$$
are isomorphisms. Hence, the relations $\phi(v) \sim v$ identify together all the different $Z(H^n)_0 \cong \Z$, and we have
$$\cKhR_{2, 0}^{0, 0}(U,p) \cong \Z.$$

Next, we look at quantum gradings $j = -2k$ with $k >0$. Using the notation $X_I^\vee$ from Section~\ref{sec:dual}, the formula \eqref{eq:psi-} for the map $\psi$ can be re-written as
$$ \psi (f) = \pm f \cdot (X_{2n+2}^\vee - X_{2n+1}^\vee).$$
Consider the elements $f_\mm \in Z(H^n)_{2k}^\vee$, where $\mm$ is a partial matching of $\{1, \dots, 2n\}$; cf. Lemma~\ref{lem:match}. We have
$$ \psi(f_{\mm}) = \pm f_{\mm \ \cup \{(2n+2, 2n+1)\}}.$$
 Therefore, after dividing by the relations $\psi(v) \sim 0$, all elements of the form $f_{\mm'}$ are set to zero, where $\mm'$ is a partial matching of $\{1, \dots, 2n+2\}$ that contains the last pair $(2n+2, 2n+1)$.
 
 If $\mm$ is a nonempty balanced partial matching of $\{1, 2, \dots, 2n+2\}$, the action of a suitable element in the braid group $B_{n+1, n+1}$ (factoring through $S_{n+1} \times S_{n+1}$) will take $f_{\mm}$ to $\pm f_{\mm'}$ where $\mm'$ is a balanced matching containing the pair $(2n+2,  2n+1)$.
 It follows that all $f_{\mm}$ are set to zero, for nonempty balanced partial matchings $\mm$. Proposition~\ref{prop:balancedmatch} says that these elements $f_{\mm}$ generate $Z(H^{n+1})_{2k}^\vee$, and therefore the whole group collapses to zero after we divide by the equivalence relation.

This concludes the proof of Proposition~\ref{unknotp} and hence of Theorem~\ref{thm:Dp}.

\section{Connected Sums}
\label{sec:ConnSum}
In this section, we prove Theorem~\ref{connectsum}. We will work with coefficients in a field $\k$. We write $\KhRN(L; \k)$ for the $\gl_N$ Khovanov-Rozansky homology of the framed link $L$ with coefficients in $\k$. We write $\Sz(W; L; \k)$ for the skein lasagna module obtained using $\KhRN(L; \k)$ instead of $\KhRN(L)$.
\begin{remark}
If char$(\k) =0$, then $\KhRN(L; \k) \cong \KhRN(L) \otimes_{\Z} \k$ and $ \Sz(W; L; \k) \cong \Sz(W; L) \otimes_{\Z} \k.$ In general, this is not true, because of the presence of Tor terms in the universal coefficients theorem.
\end{remark}

Recall that, over a field, we have the following tensor product formula for the Khovanov-Rozansky homology of a split union:
\begin{equation}
\label{eq:splittensor}
\KhRN(L_1\sqcup L_2; \k)\cong\KhRN(L_1;\k)\otimes_{\k}\KhRN(L_2;\k)
\end{equation}
We will be interested in the case when the two links are mirror to each other. In that case, the cylinder $L\times I$ is a cobordism from the empty link to $L\sqcup\overline{L}$. This provides a canonical element
$$   \mathfrak{B}: =\KhR_N(C)(1)  \in \KhRN(L \sqcup \overline{L};\k))\cong \KhRN(L; \k)\otimes_{\k} \KhRN(\overline{L};\k).$$
In order to decompose $\mathfrak{B}$ into a sum of simple tensors, we pick a basis $\{u_i\}$ for $\KhRN(L; \k)$ and write
$$   \mathfrak{B} =\sum_i u_i\otimes w_i \in \KhRN(L; \k)\otimes_{\k} \KhRN(\overline{L};\k)$$
for some $w_i\in \KhRN(\overline{L};\k)$. In particular, the right-hand side of the above is independent of the choice of basis $\{u_i\}$.

We will need a lemma for cutting necks of surfaces in lasagna fillings.
\begin{lemma}\label{neckcut}
Let $F$ be a lasagna filling of $W$ with boundary $L$ and surface $\Sigma$. Let $B$ be a 4-ball in the interior of $W$ disjoint from the input balls and intersecting $\Sigma$. Suppose the intersection is of the form  $\Sigma\cap B= \Sigma\cap (B^3\times I)= K\times I$ for some link $K\subset B^3$. Then, we can decompose $\Sigma=\Sigma^\circ\cup (K\times I)$. Let $\{u_i\}$ be a basis of $\KhRN(K; \k)$ so that $\mathfrak{B} =\sum_i u_i\otimes w_i$. Define $\widetilde{F}(u_i)$ to be the lasagna filling specified by the same data as $F$, except we replace the neck $K\times I$ with two input balls: one decorated with the link $K$ and labelled by $u_i$ and the other decorated with $\overline{K}$ and labelled by $w_i$ (See Figure~\ref{fig:neckcut}). Then, $$[F]=\sum\limits_i[\widetilde{F}(u_i)] \in \Sz(W;K; \k).$$
In particular, the right-hand side of this equation is independent of the choice of basis $\{u_i\}$.
\end{lemma}

\begin {figure}
\begin {center}
\def\svgwidth{10cm}
\begingroup%
  \makeatletter%
  \providecommand\color[2][]{%
    \errmessage{(Inkscape) Color is used for the text in Inkscape, but the package 'color.sty' is not loaded}%
    \renewcommand\color[2][]{}%
  }%
  \providecommand\transparent[1]{%
    \errmessage{(Inkscape) Transparency is used (non-zero) for the text in Inkscape, but the package 'transparent.sty' is not loaded}%
    \renewcommand\transparent[1]{}%
  }%
  \providecommand\rotatebox[2]{#2}%
  \newcommand*\fsize{\dimexpr\f@size pt\relax}%
  \newcommand*\lineheight[1]{\fontsize{\fsize}{#1\fsize}\selectfont}%
  \ifx\svgwidth\undefined%
    \setlength{\unitlength}{283.46456693bp}%
    \ifx\svgscale\undefined%
      \relax%
    \else%
      \setlength{\unitlength}{\unitlength * \real{\svgscale}}%
    \fi%
  \else%
    \setlength{\unitlength}{\svgwidth}%
  \fi%
  \global\let\svgwidth\undefined%
  \global\let\svgscale\undefined%
  \makeatother%
  \begin{picture}(1,0.2145974)%
    \lineheight{1}%
    \setlength\tabcolsep{0pt}%
    \put(0,0){\includegraphics[width=\unitlength]{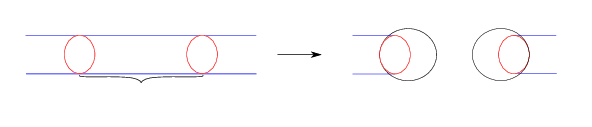}}%
    \put(0.1936,0.04399645){\makebox(0,0)[lt]{\lineheight{1.25}\smash{\begin{tabular}[t]{l}$L\times I$\end{tabular}}}}%
    \put(0.90171115,0.1117469){\makebox(0,0)[lt]{\lineheight{1.25}\smash{\begin{tabular}[t]{l}$\overline{L}$\end{tabular}}}}%
    \put(0.6089246,0.11005465){\makebox(0,0)[lt]{\lineheight{1.25}\smash{\begin{tabular}[t]{l}$L$\end{tabular}}}}%
  \end{picture}%
\endgroup%

\caption {The neck-cutting lemma.}
\label{fig:neckcut}
\end {center}
\end {figure}

\begin{proof}
First observe that we can always add an input ball decorated by the empty set and labelled by $1\in\KhRN(\emptyset)$ to any filling without affecting its class in $\Sz$. Add such an input ball near the neck $K\times I$. Enclose this new input ball together with the neck inside of a larger ball, as in Figure ~\ref{neckcutlemmafigs}. We can thereby view the cylindrical neck $K\times I$ as a cobordism from the empty set to $K\sqcup \overline{K}$. The image of $1\in\KhRN(\emptyset)$ under this cobordism map is $\mathfrak{B}$. Evaluating this cobordism gives a sum of fillings with the new ball decorated with $K\sqcup \overline{K}$ and labelled by the $u_i\otimes w_i$. The claim follows by splitting this input ball into two balls, one with $K$ and the other with $\overline{K}$.
\end{proof}

\begin {figure}
\begin {center}
\def\svgwidth{15cm}
\begingroup%
  \makeatletter%
  \providecommand\color[2][]{%
    \errmessage{(Inkscape) Color is used for the text in Inkscape, but the package 'color.sty' is not loaded}%
    \renewcommand\color[2][]{}%
  }%
  \providecommand\transparent[1]{%
    \errmessage{(Inkscape) Transparency is used (non-zero) for the text in Inkscape, but the package 'transparent.sty' is not loaded}%
    \renewcommand\transparent[1]{}%
  }%
  \providecommand\rotatebox[2]{#2}%
  \newcommand*\fsize{\dimexpr\f@size pt\relax}%
  \newcommand*\lineheight[1]{\fontsize{\fsize}{#1\fsize}\selectfont}%
  \ifx\svgwidth\undefined%
    \setlength{\unitlength}{425.19685039bp}%
    \ifx\svgscale\undefined%
      \relax%
    \else%
      \setlength{\unitlength}{\unitlength * \real{\svgscale}}%
    \fi%
  \else%
    \setlength{\unitlength}{\svgwidth}%
  \fi%
  \global\let\svgwidth\undefined%
  \global\let\svgscale\undefined%
  \makeatother%
  \begin{picture}(1,0.39921722)%
    \lineheight{1}%
    \setlength\tabcolsep{0pt}%
    \put(0,0){\includegraphics[width=\unitlength]{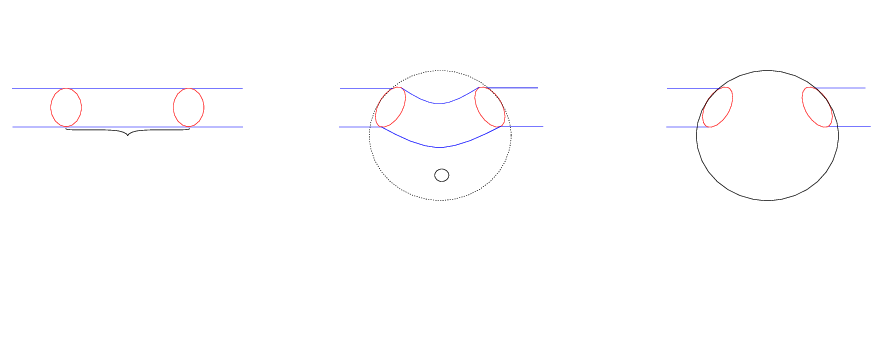}}%
    \put(0.1067922,0.22201801){\makebox(0,0)[lt]{\lineheight{1.25}\smash{\begin{tabular}[t]{l}$L\times I$\end{tabular}}}}%
    \put(0.43617658,0.17425026){\makebox(0,0)[lt]{\lineheight{1.25}\smash{\begin{tabular}[t]{l}$1\in\operatorname{KhR}_N (\emptyset)$\end{tabular}}}}%
    \put(0.77872763,0.32498985){\makebox(0,0)[lt]{\lineheight{1.25}\smash{\begin{tabular}[t]{l}$\mathfrak{B}\in\operatorname{KhR}_N (L\sqcup\overline{L})$\end{tabular}}}}%
    \put(0.04476355,0.09984192){\makebox(0,0)[lt]{\lineheight{1.25}\smash{\begin{tabular}[t]{l}The neck $L\times I$ \\ inside of the surface $\Sigma$\end{tabular}}}}%
    \put(0.38786664,0.10290075){\makebox(0,0)[lt]{\lineheight{1.25}\smash{\begin{tabular}[t]{l}Add a new input ball \\ decorated with $\emptyset$ \\ and labelled by $1$.\end{tabular}}}}%
    \put(0.70829982,0.10550255){\makebox(0,0)[lt]{\lineheight{1.25}\smash{\begin{tabular}[t]{l}Evaluate the cobordism \\ in the enclosed region \\ to obtain an \\ equivalent diagram.\end{tabular}}}}%
  \end{picture}%
\endgroup%

\caption {Proof of the neck-cutting lemma.}
\label{neckcutlemmafigs}
\end {center}
\end {figure}

We now provide the proof of the tensor product formula for boundary connected sums.
\begin{proof}[Proof of Theorem ~\ref{connectsum}]
We define the isomorphism $$\Psi: \Sz(W_1; L_1; \k)\otimes \Sz(W_2; L_2)\to \Sz(W_1\natural W_2; L_1\cup L_2; \k)$$ on simple tensors by setting $\Psi([F_1]\otimes [F_2])$ to be the lasagna filling represented by $F_1\cup F_2$.

We define an inverse to $\Psi$ as follows. The boundary connected sum is obtained from $W_1$ and $W_2$ by identifying $3$-dimensional balls $B_1 \subset \del W_1$ and $B_2 \subset \del W_2$; we write $B$ for $B_1 = B_2$ as a subset of $W_1 \natural W_2$. Let $F$ be a lasagna filling of $W_1 \natural W_2$ with boundary $L_1 \cup L_2$ and surface $\Sigma$. After an isotopy, we can arrange that:
\begin{enumerate}[(a)]
\item The input balls for $F$ are disjoint from $B$;
\item The surface $\Sigma$ intersects $B$ transversely in a link $K$.
\end{enumerate}
Decompose $\Sigma=\Sigma_1\cup_K\Sigma_2$ where $\Sigma_i\subset W_i$. We can apply Lemma \ref{neckcut} to cut along $K$ and obtain 
$$[F] = \sum\limits_i[\widetilde{F}(u_i)] \in \Sz(W_1\natural W_2; L_1\cup L_2; \k)$$
where each $\widetilde{F}(u_i)$ is of the form $F^1_i\cup F^2_i$, with fillings $F^j_i$ of $W_j$ with boundary $L_j$, $j=1,2$. Then $\Psi(\sum_i [F^1_i]\otimes [F^2_i])=[F]$, and we set
$$ \Psi^{-1}([F]) = \sum_i [F^1_i]\otimes [F^2_i].$$

We need to make sure that $\Psi^{-1}$ is well-defined. Filling in one of the input balls of $F$ (in either $W_1$ or $W_2$) with another lasagna filling does not change the equivalence classes $ [F^1_i]$ and  $[F^2_i]$, so the value of $\Psi^{-1}$ is unchanged. 

What is left to show is that  $\Psi^{-1}([F])$ does not depend on the choice of isotopy used to ensure the conditions (a) and (b) above. Consider an isotopy that moves the lasagna filling $F=F_{(0)}$ in a family $F_{(t)}, t \in [0,1]$, such that the final filling $F_{(1)}$ also satisfies (a) and (b). 

With regard to (a), we can imagine the input balls of the fillings to be small (i.e., neighborhoods of points). Generically, in a one-parameter family such as $F_{(t)}$, there can be finitely many times $t$ where an input ball passes from one side of $B$ to the other. Moving the input ball to the other side is equivalent to replacing $B$ with an isotopic ball $B'$, such that the region between $B$ and $B'$ is a cylinder $B^3 \times [0,1]$: 
\[ \input{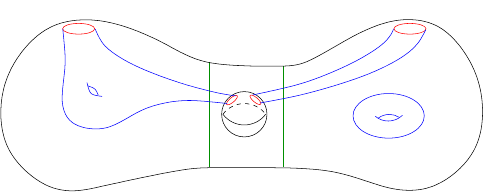_t}
\]
We obtain $\Psi^{-1}([F])$ in one case by cutting the filling $F$ along $B$, and in the other case by cutting it along $B'$. By Lemma~\ref{neckcut}, both of these are equivalent to cutting along both $B$ and $B'$, and therefore equivalent to each other.

To deal with (b), without loss of generality, we can now assume that the input balls do not intersect $B$ throughout the isotopy. Let $\Sigma_{(t)}$ be the surfaces for $F_{(t)}$. The intersections $$J_{(t)} := \Sigma_{(t)} \cap B$$
may fail to be transverse at various points in $(0,1)$, but generically we can assume that the union
$$ J = \bigcup_{t \in [0,1]} \bigl( \{t\} \times J_{(t)} \bigr) \subset [0,1] \times B$$
is a smooth link cobordism between the links $J_{(0)}$ and $J_{(1)}$. Schematically, we draw this as:
\[ \input{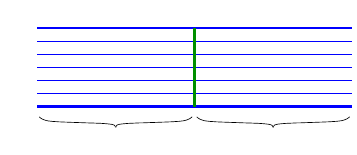_t}
\]
We use the cobordism $J$ to construct a surface with corners as follows. First, thicken the attaching region $B$ of $W_1\natural W_2$ to a neck $[0,1]\times B$. Then, insert a copy of $J$ into this neck. Finally, take the union of $J$ with $\Sigma_{(0)}$ in the $W_1$ factor and $\Sigma_{(1)}$ in the $W_2$ to obtain
$$ \bigl(\Sigma_{(0)}|_{W_1}\bigr) \cup J \cup \bigl(\Sigma_{(1)}|_{W_2}\bigr).$$
By smoothing the corners of this surface, we obtain a new lasagna filling $F_J$. This is isotopic to $F_{(0)}$ by an isotopy supported in $W_2$ and is isotopic to $F_{(1)}$ by an isotopy supported in $W_1$:
\[ \input{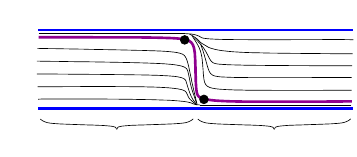_t}
\]
For example, the isotopy between $F_{(0)}$ and $F_J$ is given at time $t$ by smoothing the corners of 
$$ \bigl( \Sigma_{(0)}|_{W_1} \bigr)\cup \bigcup_{s \in [0,t]} \bigl(\{s\} \times J_{(s)}\bigr)  \cup \bigl(\Sigma_{(t)}|_{W_2}\bigr).$$

Applying $\Psi^{-1}$ to $F_{(0)}$ consists in cutting its neck at $\{0\} \times B$, which is equivalent  to cutting the neck of $F_J$ at $\{0\} \times B$ (because they are related by an isotopy supported in $W_2$). Similarly, applying $\Psi^{-1}$ to $F_{(1)}$ is equivalent to cutting the neck of $F_J$ at $\{1\} \times B$. From Lemma~\ref{neckcut} we see that the results of cutting $F_J$ at $\{0\} \times B$ and $\{1\} \times B$ are equivalent, because they are each equivalent to cutting the neck in both places. 

This completes the proof of well-definedness for $\Psi^{-1}$. The fact that $\Psi$ and $\Psi^{-1}$ are inverse to each other is immediate from the construction. 
\end{proof}

We can also deduce the same result for interior connected sums.
\begin{corollary}
\label{cor:InteriorConnSum}
Let $(W_1;L_1)$ and $(W_2;L_2)$ be a pair of 4-manifolds with links in the boundaries. Let $W_1\# W_2$ denote their interior connected sum. Then,
\begin{align*}
\Sz(W_1\# W_2; L_1\cup L_2; \k)\cong \Sz(W_1; L_1; \k)\otimes \Sz(W_2; L_2; \k)
\end{align*}
\end{corollary}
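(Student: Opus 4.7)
The plan is to run the same argument used for Theorem~\ref{connectsum}, replacing the ball $B \subset \partial W_1 = \partial W_2$ along which the boundary connected sum is formed by the separating $3$-sphere $S \subset W_1 \# W_2$ coming from the interior connect-sum decomposition $W_1 \# W_2 = W_1' \cup_S W_2'$, where $W_j' := W_j \setminus \mathring{B}^4$.

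First, I would define
$$ \Psi \colon \Sz(W_1; L_1; \k) \otimes \Sz(W_2; L_2; \k) \to \Sz(W_1 \# W_2; L_1 \cup L_2; \k) $$
on simple tensors by $\Psi([F_1] \otimes [F_2]) = [F_1 \cup F_2]$, where representatives $F_j$ are chosen to lie in $W_j' \subset W_1 \# W_2$. This is possible since the $B^4$ removed from $W_j$ is an arbitrarily small ball and any lasagna filling of $W_j$ can be isotoped off a prescribed small ball (shrinking input balls and applying transversality to the surface).

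For the inverse, given a lasagna filling $F$ of $W_1 \# W_2$ with boundary $L_1 \cup L_2$, I would isotope so that (a) all input balls of $F$ are disjoint from $S$ and (b) the surface $\Sigma$ meets $S$ transversely in a link $J$. Any link $J \subset S \cong S^3$ lies in some $3$-ball, so the neck $J \times I$ in a collar $S \times (-\epsilon,\epsilon)$ of $S$ sits inside a $4$-ball, which lets me apply Lemma~\ref{neckcut} to write $[F] = \sum_i [\widetilde{F}(u_i)]$. Each $\widetilde{F}(u_i)$ decomposes as a disjoint union $F_i^1 \sqcup F_i^2$ of lasagna fillings $F_i^j$ of $W_j'$ (whose input balls and surfaces now lie entirely in the interior of $W_j'$, hence of $W_j$), and I set $\Psi^{-1}([F]) := \sum_i [F_i^1] \otimes [F_i^2]$.

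To verify well-definedness of $\Psi^{-1}$, I would run the three cases from the proof of Theorem~\ref{connectsum} verbatim. (i) Filling an input ball on either side of $S$ does not alter the intersection with $S$ or the neck-cutting, so the classes $[F_i^j]$ are unchanged. (ii) If the chosen isotopy moves an input ball across $S$, I introduce a parallel sphere $S' \subset W_1 \# W_2$ with a collar region $S \times [0,1]$ between the two spheres, and Lemma~\ref{neckcut} shows that cutting at $S$ and cutting at $S'$ give the same element, because each equals the result of cutting at both $S$ and $S'$. (iii) If the isotopy varies $\Sigma$ so that $J$ changes, the $1$-parameter family $\Sigma_t$ traces out a link cobordism in $[0,1] \times S$; smoothing corners produces a single lasagna filling $F_J$ isotopic to $F_{(0)}$ via an isotopy supported in $W_2'$ and to $F_{(1)}$ via an isotopy supported in $W_1'$, so applying Lemma~\ref{neckcut} at $\{0\} \times S$ and at $\{1\} \times S$ yields the same value of $\Psi^{-1}$. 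Finally, $\Psi$ and $\Psi^{-1}$ are mutual inverses by construction. The main obstacle is case (iii), which, just as in the proof of Theorem~\ref{connectsum}, requires the link-cobordism bookkeeping together with a double neck cut; the remaining steps are cosmetic modifications of the boundary connected sum argument.
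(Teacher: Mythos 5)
Your argument is correct, but the paper takes a genuinely different and much shorter route. The paper's proof is a four-line reduction: by Proposition~\ref{3and4handles}, removing a small $4$-ball from each $W_j$ (i.e., undoing a $4$-handle) does not change $\Sz$; the resulting manifolds $W_j' = W_j \setminus \mathring{B}^4$ have new $S^3$ boundary components, and a boundary connected sum of $W_1'$ and $W_2'$ along $3$-balls in these $S^3$'s produces $(W_1\#W_2)\setminus B^4$; filling in the remaining $S^3$ boundary with a $4$-handle gives $W_1\#W_2$ while again preserving $\Sz$. Theorem~\ref{connectsum} then does all the work. You instead re-run the full neck-cutting argument of Theorem~\ref{connectsum} with the separating $S^3$ playing the role of the $3$-ball $B$. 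The one extra observation you correctly supply, which makes this direct route viable, is that any link in $S\cong S^3$ lies in a $3$-ball, so $J\times I$ sits in a $4$-ball near $S$ and Lemma~\ref{neckcut} still applies; the three well-definedness cases then go through as before. Both approaches work, but the paper's is modular and avoids repeating the bookkeeping, whereas yours is self-contained at the cost of essentially duplicating the earlier proof.
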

\begin{proof}
By Proposition \ref{3and4handles}, we can add and remove small 4-balls without affecting $\Sz$. We remove a small 4-ball from each of the $W_i$, then perform the boundary connect sum along 3-balls in the new 3-sphere boundary components. The two boundaries glue together to give a 3-sphere boundary component in the connected sum, which we then fill in with a 4-ball to obtain $W_1\#W_2$. Applications of Theorem~\ref{connectsum} and Proposition \ref{3and4handles} give the result.
\end{proof}

\bibliographystyle{custom}
\bibliography{biblio}

\end{document}